\newcommand{\NN}{\mathbb N}
\newcommand{\CC}{\mathbb C}
\newcommand{\RR}{\mathbb R}
\newtheorem{theorem}{Theorem}[section]
\newtheorem{proposition}[theorem]{Proposition}
\newtheorem{lemma}[theorem]{Lemma}
\newtheorem{definition}[theorem]{Definition}
\newtheorem{remark}[theorem]{Remark}
\begin{document}



\title{Bj\"{o}rk- Debrowere algebras, topologies and comparison of regularities}

\author{A. Khelif}
\author{D. Scarpalezos}
\address{Universite Paris 7\\
	Paris\\ France}
\maketitle




\section{Introduction }
In \cite{Bj.1} was presented a theory of ultradifferentiable functions, using "weight" functions instead of $(M_p)$ sequences as in most papers of Roumieu, Beurling or in Komatsu papers \cite{Komatsu.1}, \cite{Komatsu.2}, \cite{Komatsu.3}.
 This approach of "weight functions"  has also been followed by many other mathematicians such as D. Voigt, R. Meire, M. Langenbrush, A. Taylor and others, who obtained a great number of new interesting results. An open problem was the embedding of ultradistributions in some algebra in a simililar way as the embedding of distributions in Colombeau algebra \cite{Col.1}, \cite{Ober.1}. Such embeddings are usefull if we have to deal both with linear and non linear equations or with linear equations with irregular coefficients. 
 The spaces of duals of all those spaces of ultradifferentiable functions called ulradistributions were embedded in Colombeau type algebras in the doctoral thesis of A. Debrouwere and those results were published also in \cite{D.V.V.} where were exposed and investigated various properties of the algebras introduced in the thesis of A.Debrouwere.
   
     In this paper we investigate properties of the family of weight functions used in some of those  constructions.and especially in \cite{D.V.V.}.especially in the "weight function" models
      We establish in the algebra introduced in \cite{D.V.V.}, topological sharp structures analogous to the ones introduced in Colombeau algebra (algebra where are embedded usual distributions) (see \cite{S.1}, \cite{S.2}, \cite{S.3},\cite{S.4} \cite{G.1}, \cite{G.2} as well as in other analogous contructions \cite{D.S.1} and \cite{D.S.2} and \cite{P.R.S.Z}.
    We also  give projective representations of the Roumieu case and define strong and strict associations of elements of our algebras with ultradistributions and finally we establish results of comparison of regularities analogous to what was done in \cite{S.1}, \cite{P.S.V.} ,\cite{P.R.S.Z}
    In the last section we present results concerning criteria for equalities analogous to what is done in  \cite{P.S.V.} and
    more recently in \cite{P.R.S.Z}.
    
     In order to make the paper almost self contained we presented our own proofs of properties we need also concerning the ultradifferentiable functions.(results also existing in other papers as \cite{D.P.V.}). 
      For many assertions, there are clearly also other proofs, but we present ours in order to favour  a clearer understanding of our methods.For the properties of ultradifferentiable fuctions and their duals , one could louk at the following \cite{B.M.T.}, \cite{R.S.}, \cite{N.V.}.
     
      Of course the litterature on the subject is much richer 
      but those papers can help to get a first  understanding of the subject. 
      Analogous results to ours are proved in the sequence model 
      in \cite{P.R.S.Z}
     but often  with different techniques .                             =
\section{weight functions}

We will consider the family $\mathcal{W}$ of weight functions used in \cite{D.V.V.} for Roumieu case also
with the stronger property
 $ lim \frac{\omega(t)}{log(1+t)}\,=\,\infty .$

\begin{definition}
Let $\,\mathcal{W}\,$ be the spaces of all nondecreasing positive  continuous functions $\,\omega\,$ defined on
 $\RR^+$,such that 
 \\
 a)$\omega(0)\,=\,0 $,and has the property of subadditivity , i.e.: for all couple $(x,y)$ of positive numbers 
 $$\omega(x+y)\,\leq\,\omega(x)\,+\,\omega(y)\,$$
 \\

b) $$\,\int_1^\infty \frac{\omega(t)}{t^2}dt\,<\,\infty . $$

c) $$\lim(\frac{\omega(t)}{log(1+t)})\,=\,\infty.$$

In this family of functions, we establish the following weak equivalence relation:
 two weight functions$\,\omega_1\,$ and
$ \,\omega_2 \,$ are said to be weakly equivalent if there exist positive constants $k$ and $m$ such that 
$$\forall t \in \RR^+\,, k\omega_1(t) \,\leq\,\omega_2(t)\,
\leq\,m\omega_1(t)\,.$$
\\
 We will also define a notion of strong inequality in
  $\,\mathcal{W}\,.$
  \\
  $\omega_1\,<<\,\omega_2\,$ if and only if 
  $\,\omega_1(t) \,=\,o(\omega_2(t)\,.$
  
  \end{definition}
 
  We first establish some results concerning weak equivalence:

\begin{theorem}
a) For any weight function $\,\omega\,\in\,\mathcal{W}\,$
there exists a function $f$ in the same weak equivalence class as $\omega$ which is increasing and such that: 
$\,\frac{f(x)}{x}\,=\,\phi(x)$ is decreasing.
More over any positive continuous function $f$ on  
$\RR^+$ such that$\,f(0)\,=\,0$ ,  equivalent to an element of $\mathcal{W}\,$ and satisfying the property that it is increasing and $\,\frac{f(x)}{x}\,=\,\phi(x)\,$ is decreasing belongs to $\mathcal{W}$.

b)In every weak equivalence class there exists a concave element.
\end{theorem}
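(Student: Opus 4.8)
The plan is to handle part (a) by an explicit construction and then to deduce part (b) by passing to the concave envelope. Given $\omega\in\mathcal{W}$ I would set
\[
f(x)\,=\,\sup_{s\ge 1}\frac{\omega(sx)}{s}\,=\,x\sup_{t\ge x}\frac{\omega(t)}{t},\qquad
\phi(x)=\frac{f(x)}{x}=\sup_{t\ge x}\frac{\omega(t)}{t}.
\]
Both monotonicities come for free from the two descriptions: in the first form $f$ is a supremum of the nondecreasing functions $x\mapsto \omega(sx)/s$, hence nondecreasing, while in the second form $\phi$ is a supremum over the shrinking rays $[x,\infty)$, hence nonincreasing. The weak equivalence $\omega\le f\le 2\omega$ is exactly where subadditivity enters: from $\omega(0)=0$ and subadditivity one gets $\omega(t)\le\lceil t/x\rceil\,\omega(x)\le (t/x+1)\,\omega(x)\le (2t/x)\,\omega(x)$ for $t\ge x$, so $x\,\omega(t)/t\le 2\omega(x)$ and the supremum defining $f$ is at most $2\omega(x)$; the lower bound $f\ge\omega$ is just the term $s=1$.

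It then remains to check the other membership conditions. Continuity of $f$ reduces to continuity of $\phi$, and since $g(t)=\omega(t)/t$ is continuous the envelope $\phi(x)=\sup_{t\ge x}g(t)$ is both right- and left-continuous (the boundary value $g(x)$ can never be lost, because continuity forces $\sup_{t>x}g(t)\ge g(x)$), so $f=x\phi$ is continuous; positivity, $f(0)=0$ and finiteness are immediate from $\omega\le f\le 2\omega$. For the converse half of (a) the only nontrivial point is subadditivity, which follows from $f(0)=0$ together with $\phi$ nonincreasing by the one-line estimate
\[
f(x+y)=(x+y)\phi(x+y)=x\phi(x+y)+y\phi(x+y)\le x\phi(x)+y\phi(y)=f(x)+f(y);
\]
conditions (b) and (c) are inherited directly from any weakly equivalent element of $\mathcal{W}$ through the inequalities $k\omega\le f\le m\omega$.

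For part (b) I would take the $f$ produced in (a) (so $f$ is increasing with $\phi=f/x$ nonincreasing) and pass to its least concave majorant $\hat f$ on $\RR^+$. Concavity and $\hat f\ge f$ are automatic, so the heart of the matter is the reverse bound $\hat f\le 2f$. Using the one-dimensional description $\hat f(x)=\sup\{\lambda f(a)+(1-\lambda)f(b):\lambda a+(1-\lambda)b=x,\ 0\le\lambda\le1\}$ and writing $f=x\phi$, I estimate $f(a)\le f(x)$ for $a\le x$ and $f(b)\le (b/x)f(x)$ for $b\ge x$ (the latter being precisely $\phi(b)\le\phi(x)$); substituting and simplifying with $\lambda a+(1-\lambda)b=x$ collapses the resulting bracket to $1+\lambda(x-a)/x\le 2$, whence $\hat f\le 2f$. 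Thus $\hat f$ is concave and weakly equivalent to $f$, hence to $\omega$; concavity with $\hat f(0)=0$ yields subadditivity and the decrease of $\hat f/x$, while (b) and (c) again transfer through the equivalence, so $\hat f\in\mathcal{W}$ is the desired concave representative.

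I expect the main obstacles to be the two quantitative estimates that keep the constructions inside the class: the use of subadditivity to obtain $f\le 2\omega$ in part (a), and the chord estimate bounding $\hat f\le 2f$ in part (b). The continuity of the envelope $\phi$ and the (non-strict) reading of ``increasing/decreasing'' are minor technical points; if strict monotonicity is insisted upon, a vanishing perturbation can be added without leaving the equivalence class.
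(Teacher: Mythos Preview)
Your proof is correct. Part (b) follows exactly the paper's strategy: take the least concave majorant of the $f$ produced in (a) and show $f\le \hat f\le 2f$. The paper bounds $\hat f(x)$ from above by the single affine majorant $A_x(t)=f(x)(1+t/x)$ (which dominates $f$ because $f$ is increasing for $t\le x$ and $f(t)/t\le f(x)/x$ for $t\ge x$), while you compute the chord envelope directly; the two arguments are essentially the same and give the same constant $2$.

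Part (a), however, uses a genuinely different construction. The paper defines
\[
f(x)=\int_1^2\frac{\omega(xy)}{y^2}\,dy,\qquad \frac{f(x)}{x}=\int_x^{2x}\frac{\omega(t)}{t^2}\,dt,
\]
so the equivalence $\tfrac12\omega\le f\le \omega$ follows from monotonicity and subadditivity of $\omega$, and the decrease of $f(x)/x$ is obtained from the identity $\int_x^z-\int_{2x}^{2z}=\int_x^z\frac{2\omega(s)-\omega(2s)}{2s^2}\,ds\ge 0$. Your supremum construction $f(x)=\sup_{s\ge 1}\omega(sx)/s=x\sup_{t\ge x}\omega(t)/t$ has the advantage that both monotonicities are visible at a glance from the two representations, with no computation at all; the price is that the bound $f\le 2\omega$ requires the explicit subadditivity estimate $\omega(t)\le\lceil t/x\rceil\,\omega(x)\le (2t/x)\omega(x)$. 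Both routes are short; yours is perhaps more transparent about \emph{why} the monotonicity of $f/x$ holds, while the paper's integral is a smooth averaging that avoids any case analysis. The converse half of (a) is handled identically in both proofs.
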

\begin{proof}

a)Consider $\,f(x)\,=\int_1^2\frac{\omega(xy}{y^2}dy \,$

By change of variable we obtain: 
$\,\frac{f(x)}{x}\,
=\,\int_x^{2x}\frac{\omega(t)}{t^2}dt\,$

Let $\,z\,\in\,(x,2x)\,$
Then 
$$ \frac{f(x)}{x}\,-\,\frac{f(z)}{z}
\,=\,\int_x^z\frac{\omega(t)}{t^2}dt\,
-\,\int_{2x}^{2z}\frac{\omega(t)}{t^2}\,
  =\,
  \int_x^z(\frac{\omega(s)}{s^2}-\frac{\omega(2s}{2s^2})ds\,
 =\,\int_x^z\frac{2\omega(s)-\omega(2s)}{2s^2}\,
  \geq\,0\,$$ 
  because $\,\omega\,$ is subadditive!
  Notice now that, as $\omega$ is subadditive,
  $$\,\frac{1}{2}\omega(x)\,
  =\,\int_1^2\frac{\omega(x)}{y^2}dy\,\leq\,\int_1^2\frac{\omega(xy)}{y^2}dy\,\leq\,,\int_1^2\frac{\omega(2x)}{y^2}dy\,=
  \,\frac{\omega(2x)}{2}\,\leq\omega(x)\,.                   $$

  Thus $\,f\,$ and $\,\omega\,$ are weakly equivalent.
Let us now remark that one can easily prove that if $f$ is a function such that $ \phi(x)\,= \,\frac{f(x)}{x}$ is decreasing then $f$ is subadditive. The other conditions in order to belong to $\mathcal{W}$ follow from weak equivalence.

b) We can suppose now, without loss of generality, that $f$ is an element of our weak equivalence class as the one just  constructed in a).
Consider at each positive number $x$ the affine functions 
$A_x(t)\,=\,f(x)+f(x) \frac{t}{x}$
One can verify that given the properties of $f$ that
$\,\forall\,t\,,f(t)\,\leq\,A_x(t)$.More over 
$A_x(x)\,=\,2f(x)$ Consider now the infimum $\,h\,$ of all concave functions bigger or equal to $\,f\,$ .

We have on every point x 
$$\,f(x)\,\leq\,h(x)\,\leq\,A_x(x)\,=\,2f(x)$$
thus $\,h\,$ is weakly equivalent to $\,f\,$ and hence satisfies all the properties to belong to 
$\,\mathcal{W}.$

\end{proof}

\begin{remark}
The family of weak equivalence classes is uncountable 
because each of the functions $x^a\,$ with $\,a\in\,(0,1)$is a weight function belonging to $\,\mathcal{W}\,$,and for two different exponents those functions belong to different weak equivalence classes.

\end{remark}

Let us now investigate some properties of the strong inequality :

\begin{theorem}
The strong inequality $(\,<<\,) $ ,defined in the space of weight functions
 $\,\mathcal{W}\,$ satisfies the following properties:
 
 a)For any $\,\omega\,\in\mathcal{W}$, there exists  $\omega'\in\mathcal{W}\, $ such that $\,\omega\,<<\,\omega'$.
 
 b)For any $\,\omega\,\in\,\mathcal{W}$ there exists 
 $\,\omega_1\,$ such that $\,\omega_1\,<<\,\omega\,$
 
 c)For any couple $\,(\omega_1,\omega_2)\,$
 such that $\,\omega_1 \,<<\,\omega_2\,$ there exists 
 $\,\omega_3\,\in \,\mathcal{W}\,$ such that

 $$\,\omega_1\,<<\,\omega_3\,<<\,\omega_2\,.$$
 
 d)For any couple $\,(\omega_1,\omega_2)\,$, there exists
 $\,\omega_3\,\in \,\mathcal{W}\,$, such that 
 $\,\omega_1\,<<\,\omega_3\,$ 
 and $\,\omega_2\,<<\,\omega_3\,.$
 \end{theorem}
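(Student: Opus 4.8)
My plan rests on two elementary closure properties of $\mathcal{W}$ together with Theorem 2.2. First, by Theorem 2.2(b) I may replace every weight function by a weakly equivalent \emph{concave} representative; since the strong relation $\ll$ and the defining conditions (b),(c) are all invariant under weak equivalence, working with concave representatives costs nothing. Second, the key technical device is that the geometric mean of two nonnegative concave functions is again concave: the map $(u,v)\mapsto\sqrt{uv}$ is concave and nondecreasing on $(\RR^+)^2$, so its composition with the concave curve $x\mapsto(f(x),g(x))$ is concave. Because a concave function vanishing at $0$ is automatically subadditive, any geometric mean I form satisfies condition (a) of the definition for free, and I will only have to check integrability (b) and logarithmic growth (c). Finally, sums stay in $\mathcal{W}$ (a sum of concave functions vanishing at $0$ is concave, and (b),(c) are clearly preserved), so (d) will reduce to (a).

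For (b), set $\omega_1(t)=\sqrt{\omega(t)\,\log(1+t)}$. Both factors are concave (the second since $\log(1+t)$ is), so $\omega_1$ is concave with $\omega_1(0)=0$, hence subadditive. From $\omega\in\mathcal{W}$ we have $\log(1+t)\le\omega(t)$ for large $t$ by (c), whence $\omega_1\le\omega$ eventually and (b) is inherited; moreover $\omega_1(t)/\log(1+t)=\sqrt{\omega(t)/\log(1+t)}\to\infty$, giving (c). Finally $\omega_1(t)/\omega(t)=\sqrt{\log(1+t)/\omega(t)}\to 0$, i.e. $\omega_1\ll\omega$. For (c), given $\omega_1\ll\omega_2$ I take $\omega_3=\sqrt{\omega_1\omega_2}$, concave by the same device, hence subadditive. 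Since $\omega_1\le\omega_2$ eventually we get $\omega_3\le\omega_2$ and $\omega_3\ge\omega_1$ eventually, so (b) and (c) hold; and $\omega_3/\omega_1=\sqrt{\omega_2/\omega_1}\to\infty$ while $\omega_3/\omega_2=\sqrt{\omega_1/\omega_2}\to 0$, which is exactly $\omega_1\ll\omega_3\ll\omega_2$.

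The substantial point is (a): producing $\omega'\gg\omega$ that is \emph{still} integrable against $t^{-2}$. Using the representative of Theorem 2.2(a) I may assume $\phi(t)=\omega(t)/t$ is decreasing, and (b) reads $\int_1^\infty\phi(t)\,dt/t<\infty$; write $r(x)=\int_x^\infty\phi(t)\,dt/t\to 0$. The naive candidate $\omega(t)/\sqrt{r(t)}$ already has the right size and integrability, since $\int_1^\infty(\omega/t^2)r^{-1/2}\,dt=\int_1^\infty(-r')r^{-1/2}\,dt=2\sqrt{r(1)}<\infty$, while $\omega/\sqrt r$ dominates $\omega$ because $r^{-1/2}\to\infty$. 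The difficulty is subadditivity: $(\omega/\sqrt r)(t)/t=\phi/\sqrt r$ need not be monotone, and one cannot repair this by passing to a concave or monotone majorant, as that operation can destroy the finiteness of $\int\,\cdot\,t^{-2}$. Instead I would build the dominating function with a decreasing ratio from the outset. After the substitution $u=\log t$ the task becomes: given a decreasing integrable $h\ge 0$ on $[0,\infty)$ (here $h(u)=\phi(e^u)$), construct a decreasing integrable $H$ with $H\ge h$ and $H/h\to\infty$. This is done blockwise: on the $k$-th block, where the remaining mass $\int_{\,\cdot}^\infty h$ is halved, I set $H$ constant equal to the value of $h$ there times a factor $M_k\to\infty$, the $M_k$ chosen from a tail-of-tail weighting of type $(\text{remaining mass})^{-1/2}$ so that $\sum M_k\cdot(\text{block mass})$ still converges. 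Returning to the variable $t$, $\omega'(x)=x\,H(\log x)$ has $\omega'(x)/x=H(\log x)$ decreasing, hence $\omega'$ is subadditive by the criterion established in the proof of Theorem 2.2; moreover $\int_1^\infty\omega'(t)\,t^{-2}\,dt=\int_0^\infty H(u)\,du<\infty$ and $\omega'/\omega=H/h\to\infty$, with (c) inherited from $\omega'\ge\omega$. A routine adjustment near the origin then makes $\omega'$ an honest element of $\mathcal{W}$.

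For (d), given any $\omega_1,\omega_2$ I first form $\omega_1+\omega_2\in\mathcal{W}$ and apply (a) to obtain $\omega_3\gg\omega_1+\omega_2$; since $\omega_i\le\omega_1+\omega_2$ this yields $\omega_i\ll\omega_3$ for $i=1,2$. The one genuinely delicate step in the whole proof is the construction in (a): the monotone blockwise domination is where all the care goes, precisely because the obvious smoothings (concave or monotone majorants) fail to preserve integrability, and arranging the boosting factors $M_k\to\infty$ to be compatible with convergence of the weighted mass is the crux.
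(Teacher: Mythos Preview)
Your treatment of (c) and (d) coincides with the paper's: it too takes $\omega_3=\sqrt{\omega_1\omega_2}$ for (c), and for (d) it forms $\sup(\omega_1,\omega_2)$ (you use the sum, which is just as good) and then invokes (a). Your argument for (b) is genuinely different and cleaner: the paper builds $\omega_1$ by a piecewise construction on a sequence $(T_n)$ chosen so that $\omega(t)/\log(1+t)\ge n^2$ for $t\ge T_n$, gluing scaled pieces $\tfrac{1}{n+1}\omega$ with affine connectors, whereas your single formula $\omega_1=\sqrt{\omega\cdot\log(1+t)}$ is concave (hence subadditive) and the three limits $\omega_1/\omega\to 0$, $\omega_1/\log(1+t)\to\infty$, $\omega_1\le\omega$ eventually, give everything at once. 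This is a real simplification.

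For (a), however, your sketch has a gap at precisely the point you flag as delicate. Setting $H$ \emph{constant} on the $k$-th tail-halving block, equal to $M_k\,h(u_k)$ with $M_k\sim(\text{tail mass})^{-1/2}\sim 2^{k/2}$, does give summable mass, but nothing forces the sequence $c_k=M_k h(u_k)$ to be decreasing: the halving condition controls $\int_{u_k}^{u_{k+1}}h$, not the pointwise ratio $h(u_k)/h(u_{k+1})$, so one may well have $c_{k+1}>c_k$ and $H$ fails to be decreasing. The paper's construction avoids this by working directly with $\omega$ rather than with a piecewise constant $H$: on $[T_n,T_{n+1}]$ it sets $\omega'=\max\bigl(n\omega,\,A_n\bigr)$ where $A_n$ is the affine interpolant joining $n\,\omega(T_n)$ to $(n{+}1)\,\omega(T_{n+1})$. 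This makes $\omega'$ continuous by design, keeps $n\omega\le\omega'\le (n{+}1)\omega$ on each block (so $\int\omega'/t^2\le\sum (n{+}1)A/2^n<\infty$), and the freedom to push $T_{n+1}$ far out (using $\phi=\omega/t\to 0$) is exactly what makes $A_n(t)/t$ decreasing, hence $\omega'/t$ decreasing and $\omega'$ subadditive. Your log-substitution framing is nice, but to close the argument you should replace the constant-on-block step by the analogous ``scaled $h$ plus affine connector'' device, or else explain how you choose the block endpoints so that the constants $c_k$ come out monotone.
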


\begin{remark}
Let $\omega_1$ be weakly  equivalent to $\omega_3,$ and 
$\omega_2 \,$ be weakly equivalent to $\omega_4\,$ then it is easy to see that if $\omega_1\,<<\,\omega_2
\,$ then we also have $\omega_3\,<<\,\omega_4\,,$
Thus the strong inequality depends only on the weak equivalence class and thus defines a "strong inequality "
on the family of equivalence classes which becomes thus a directed set
\end{remark}

\begin{proof}

In all those proofs we will suppose from now on, without loss of generality, that our fonctions $\, \omega(x)\,$ are concave or  such that $\,\phi(x)\,=\,\frac{\omega(x)}{x}\,$ is decreasing;

a)Let $\,A\,=\,\int_1^\infty \frac{\omega(t)}{t^2}dt\,$
We can construct inductively an increasing to infinity sequence $\,(T_n)\,$ such that
$$\int_{T_n}^{T_{n+1}}\frac{\omega(t)}{t^2}dt\,
\leq\,\int_{T_n}^\infty \frac{\omega(t)}{t^2}d\,
\leq\,\frac{A}{2^n},.$$.

 Consider now, on each interval
 $\,(T_n,T_{n+1})\,$ the affine function 
$\,A_n$ taking on $\,T_n\,$ the 
value $\,n\omega(T_n)\,$ and on $\,T_{n+1}\,$ the value
$\,(n+1)\omega(T_{n+1})\,$.
Clearly the function $\,A_n\,$ is increasing
We can impose to  $\,T_{n+1}\,$ to be far enough from 
$\,T_n\,$ in such a way that the function which on each of those ontervals is equal to
$\,\phi_n(t)\,=\,\frac{A_n(t)}{t }\,$ is decreasing .
 Consider now the function $\,\omega'\,$
 such that on any of those intervals $\,(T_n,T_{n+1})$ it takes the value of the maximum of
  $\,n\omega\,$ and $\,A_n\,$. By concavity of$\, \omega\,$, and $\, \omega'\,\leq\, (n+1)\omega\,$ on $\,(T_n,T_{n+1})$. One  can easily verify that this function is increasing and by good choice of the $T_n$ ,
   $\,\frac{\omega'(x)}{x}\,$ is decreasing. Morover it satisfies all properties in order to belong to our space of weight functions, because it is increasing to infinity and
   $\,\frac{\omega'}{t}\, $ is decreasing to zero,
   it is bigger than $\,\omega\,$ Thus
   
   $\lim(\frac{\omega(t)}{log(1+t)})\,=\,\infty.$
    and  
   $\int_{T_n}^{T_{n+1}}\frac{\omega'(t)}{t^2}dt \,\leq\,
   \frac{(n+1)A}{2^n}\,$ which implies that 
   $\int_1^\infty\frac{\omega'(t)}{t^2}dt \,<\,\infty\,$ 
    
   Clearly by construction $\omega(t)\,= o(\omega'(t))\,$ 
thus,$$\,\omega\,<<\,\omega'\,$$

b) As $\,\frac{\omega(t)}{log(1+t}\,\rightarrow\,\infty\,$
 we can construct inductively a sequence $\,(T_n)\,$ increasing to infinity
 such that that the following properties are satisfied
 $$a)\,\forall\,t\,\geq\,T_n\,
 ;\frac{\omega(t)}{log(1+t)}\,\geq\,n^2. $$
Moreover if we consider on every interval
 $\,(T_n,T_{n+1})\,$ the affine function $\,B_n\,$ which on 
 $\,T_n\,$ takes the value $\,\frac{1}{n}\omega(T_{n})$  
and on $\,T_{n+1}\,$ the value
 $\,\frac{1}{n+1}\omega(T_{n+1})$,then we can construct inductively this sequence $\,(T_n)\,$ in such a way that   $\,B_n\,$ is increasing and $\,\frac{B_n(t)}{t}\,$ is decreasing .
 Consider now the function $\,\omega"\,$,which on any interval $\,(T_n,T_{n+1})\,$ is equal to the maximum of
 $\,\frac{1}{n+1}\omega(t)\, $ and $\,B_n$ .This function is clearly increasing and when divided by $\,t\,$ the result is decreasing .More over as it is smaller than $\,\omega\,$ and for
  $\,t\,\geq\,T_n\,$ we have: 
  $\frac{\omega"(t)}{log(1+t)} \,\geq\,\frac{n^2}{n+1}\,$. We can easily conclude that it satisfies all conditions to belong  to our family of weight functions $\,\mathcal{W}\,.$
  By construction it is clear that it is $\,o(\omega(t)\,$
 Thus $\,\omega_1\,<<\,\omega\,.$
 
 c)Let us consider the function
  $\,\omega_3(t)\,= \,\sqrt{(\omega_1(t)\omega_2(t))}\,$
  It is immediate to verify that this functions verifies all the requirements and that 
  $\,\omega_1\,<<\omega_3\,<<\,\omega_2\,$
 
d)Consider the function
 $\,\omega"\,=\,sup(\omega_1,\omega_2)\,$ It is easy to verify that 
  it belongs to $\,\mathcal{W} \,$.
  Let $\,\omega_3\,$ be an element of $\,\mathcal{W}\,$
  such that $\,\omega"\,<<\,\omega_3\,.$
  We have both $\,\omega_1\,<<\,\omega_3$
  and $\,\omega_2\,<<\,\omega_3\,.$

\end{proof}

Now we establish a lemma which will be important for the rest of this paper. 

(Analogous results are present in the litterature about weight fuctions and ultradifferentiable functions but we give a proof for self consistency of the paper (an analogous result was also prooved for the sequence presentation of ultradiferentiaition in 
\cite{D.V.V.} )
\\
`
\begin{lemma}

  FUNDAMENTAL LEMMA
  \\
  
Let $\,\omega\,$ be a weight function belonging to
  $\,\mathcal{W} $, then the following results hold:
  \\
  
a)A continuous function
 $\,g(t)\,:\,\RR^+\,\rightarrow\,\RR^+\,$ is
  $o(e^{k\omega(t)})$ for all strictly positive constants $ \,k\,$ if and only if there exists
 $\,\omega'\,\in\,\mathcal{W}\,,\omega'\,<<\,\omega\,$ 
  such that $\,g(t)\,=\,o(e^{\omega'(t)})$
\\  
  b) A continuous function 
   $\,h(t)\,:\,\RR^+\rightarrow\,\RR^+\,$ is 
$o(e^{-l\omega(t)})$ for some positive $\,l\,$ if and only if 
$$\,\forall\,\omega'\,<<\,\omega\,,
 h(t)\,=\,o(e^{-\omega'(t)})\,$$

\end{lemma}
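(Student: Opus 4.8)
The plan is to establish in each part the straightforward implication directly and the remaining one by an explicit interpolation construction of a weight function lying strictly below $\omega$, of exactly the type used to prove the properties of the strong inequality. Throughout, by the first theorem of this section I may replace $\omega$ by a weakly equivalent function and so assume that $\phi(t)=\omega(t)/t$ is decreasing; since $<<$ and the relation ``$o(e^{k\omega})$ for all $k$'' are unchanged under weak equivalence, this reduction is harmless. The two easy directions then follow from monotonicity of the exponential and the meaning of $<<$. In a), if $\omega'<<\omega$ and $g=o(e^{\omega'})$, then $\omega'(t)/\omega(t)\to0$, so for every $k>0$ one has $\omega'(t)\le k\omega(t)$ eventually; writing $g(t)e^{-k\omega(t)}=\bigl(g(t)e^{-\omega'(t)}\bigr)e^{\omega'(t)-k\omega(t)}$ and bounding the second factor by $1$ gives $g=o(e^{k\omega})$ for every $k$. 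In b), if $h=o(e^{-l\omega})$ and $\omega'<<\omega$, then $\omega'(t)\le l\omega(t)$ eventually and $h(t)e^{\omega'(t)}=\bigl(h(t)e^{l\omega(t)}\bigr)e^{\omega'(t)-l\omega(t)}\le h(t)e^{l\omega(t)}\to0$, so $h=o(e^{-\omega'})$.

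For the hard direction of a) I use the hypothesis with $k=1/n$ to choose thresholds $T_n\uparrow\infty$ with $g(t)\le e^{\omega(t)/n}$ for $t\ge T_n$, spacing them so that $\int_{T_n}^{\infty}\omega(t)/t^2\,dt\le A2^{-n}$ and $\omega(t)/\log(1+t)\ge n^3$ on $[T_n,\infty)$. I then build $\omega'\in\mathcal{W}$ by the affine-interpolant-plus-maximum recipe so that $\tfrac{2}{n}\omega(t)\le\omega'(t)\le\tfrac{C}{n}\omega(t)$ on $[T_n,T_{n+1}]$ for a fixed constant $C$: the first inequality forces $\omega'(t)/\log(1+t)\to\infty$ and, combined with $g(t)\le e^{\omega(t)/n}\le e^{\omega'(t)/2}$, yields $g(t)e^{-\omega'(t)}\le e^{-\omega'(t)/2}\to0$, that is $g=o(e^{\omega'})$; the second inequality forces both $\int^{\infty}\omega'/t^2<\infty$ and $\omega'(t)/\omega(t)\to0$, that is $\omega'<<\omega$.

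For the hard direction of b) I argue by contradiction. If $h\neq o(e^{-l\omega})$ for every $l>0$, then applying this with $l=1/n$ produces $\epsilon_n>0$ and points $s_n\uparrow\infty$ with $h(s_n)\ge\epsilon_n e^{-\omega(s_n)/n}$; since $\omega(s_n)\to\infty$ along the sequence I may take $s_n$ so large that $\omega(s_n)\ge n\log(1/\epsilon_n)$. A further interpolation produces $\omega'\in\mathcal{W}$ with $\tfrac{\omega(s_n)}{n}+\log(1/\epsilon_n)\le\omega'(s_n)\le\tfrac{2}{n}\omega(s_n)$; the upper bound gives $\omega'<<\omega$, while the lower bound gives $h(s_n)e^{\omega'(s_n)}\ge\epsilon_n e^{\omega'(s_n)-\omega(s_n)/n}\ge1$, so $\limsup_t h(t)e^{\omega'(t)}\ge1$ and $h\neq o(e^{-\omega'})$, contradicting the hypothesis.

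The genuinely technical point, common to both hard directions and the expected main obstacle, is the interpolation step itself: from prescribed target values of the form $c_n\,\omega(s_n)$ with $c_n\downarrow0$ one must manufacture an \emph{admissible} $\omega'$, namely one that is continuous and increasing, has $\omega'(t)/t$ decreasing (hence subadditive), has $\int^{\infty}\omega'/t^2<\infty$, and satisfies the growth condition c), all while remaining $o(\omega)$. As in the proof of the strong-inequality theorem this is secured by spacing the nodes sufficiently far apart and taking maxima of affine interpolants against a small multiple of $\omega$; the monotonicity of $\phi$ and the geometric control $\int_{T_n}^{\infty}\omega/t^2\le A2^{-n}$ of the tails are precisely what keep every defining property of $\mathcal{W}$ in force.
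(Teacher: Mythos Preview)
Your proposal is correct and follows essentially the same route as the paper: in both hard directions you reduce to the interpolation construction of a weight function $\omega''<<\omega$ from the proof of the strong-inequality theorem, choosing the nodes $T_n$ (resp.\ $s_n$) so that the hypothesis on $g$ (resp.\ the negation of the conclusion on $h$) is witnessed along the sequence, and the easy directions are handled identically by the remark that $\omega'<<\omega$ forces $\omega'(t)\le k\omega(t)$ eventually. One minor remark: in part~b) your auxiliary constants $\epsilon_n$ are unnecessary, since failure of $h=o(e^{-l\omega})$ for \emph{every} $l>0$ actually forces $\limsup_t h(t)e^{\omega(t)/n}=\infty$ for each $n$ (otherwise $h=O(e^{-\omega/n_0})$ for some $n_0$ would give $h=o(e^{-\omega/(2n_0)})$), which is exactly what the paper exploits to take $e^{\omega(T_n)/n}h(T_n)>n$ directly.
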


\begin{proof}
a) The property: 
$$ \,\forall\,k\,>\,0\,, g(t)\,=\,o(e^{k\omega(t)})\,$$
implies that for all integer n we can find $\,T_n\, ,$
as large as we want, such that 
$$\,\forall \,t\,\geq\,T_n\,,\,g(t)\,
<\,\frac{1}{n}e^{\frac{1}{n}\omega(t)}\,.$$
We can thus construct inductively an increasing to infinity sequence $\,(T_n)\,$ with each 
$\,T_n\,$ satisfyning the above property as well as all the properties of the proof of point b) of theorem (1.4) for the construction of $\,\omega''\,<<,\,\omega\,.$
Clearly this $\,\omega'' \,$ thus constructed satisfies
the property :
$\,g(t)\,=\,o(e^{\omega''(t)})\,.$
Conversely , notice that if $\,\omega_1\,<<\,\omega\,,$
for any $\,k\,>\,0\,,$ for $t$ large enough
 $ \,\omega_1\,<\,k\omega(t)\,$ and for any $\,t\,$ large enough we have  $\,\omega'(t)\,<\,k\omega(t)$ ,Thus if for some $\omega_1 $ we have 
 $$ h(t) \,=\,o(e^{\omega_1(t)})$$   we also have 
 for any $k>0$ :
 $$ h(t) \,=\,o(e^{k\omega(t)}).$$
 b) Let us now suppose that for some positive $l$ we have
 $ \,h(t) \,=\,o(e^{-l\omega(t)})$ then as for any 
 $\,\omega_1 <<\,\omega\,,$ for $t$ large enough we have 
 $\,\omega_1(t) <l\omega(t)\,$ we can easily conclude that 
 $$ \,\forall \omega_1\,<<\,\omega\,$$
 $$\, h(t) \,=\,o(e^{-\omega_1(t)})\,$$

 Conversely let us suppose that 
 $$ \,\forall\,\omega'\,<<\,\omega\,
 ,h(t)\,=\,o(e^{-\omega'(t)})\,$$ 
but there does not exist any $\,l\,>\,0\,$ such that 
$\,h(t)\,=\,o(e^{-l\omega(t)})\,.$
We will prove that in this case this contradicts the previous hypothesis;
If this was true for any integer $\,n\,$, for any $\,M\,>\,0$ and $\,B\,>\,0$ there would exist $\,T\,>\,M\,$

such that 
$\,e^{\frac{1}{n}\omega(T)}h(T)\,>\,B\,.$
Thus we can construct inductively an increasing to infinity sequence $\,(T_n)\,$  satisfying all the conditions of the proof of point b) of theorem (1,4) and such that $\,e^{\frac{1}{n}\omega(T_n)}h(T_n)\,>\,n\,$. 
Thus if $\,\omega"\,$ is the weight function constructed as in this proof by basing the construction on this sequence
 $\,(T_n)\,$,
  we can see that
 $e^{\omega"(T_n)}h(T_n) \,\rightarrow\,\infty$
 which contradicts the hypothesis on $\,h\,$.

\end{proof}

\section{"weight functions based" ultradifferentiable functions}

Here we follow the presentation of the subject in \cite{D.V.V.};we use the definitions and choice of norms used in
 \cite{D.V.V.}. and use their properties presented in this paper.
 
  Let $\,\omega\,$ be  a fixed weight function belonging to $\,\mathcal{W}\,.$ In \cite{D.V.V.} was presented the fact that the following families of seminorms defined on test fuctions with supports in a given compact subset $\,K\,$:

  a)$$ ||f||_{K,l}^{\omega} \,
  =\,\int |\hat{f}(\xi)|e^{l\omega(\xi)}d\xi .$$
  b)$$||f||_{K,l}^{\omega,\infty}\,
  =\,sup(\hat{f}(\xi)|e^{l\omega(\xi)})$$
  c) $$||f||_{K,l}^{\omega,2} \,=\,
  (\int|\hat{f}(\xi)|^2e^{2l\omega(\xi)}d\xi)^{\frac{1}{2}} .$$
  Those families of seminorms
 satisfy the following relations :There exist positive constants 
 $\,C_1,C_2,C_3,C_4\,$ such that:
 $$C_1||f||_{K,l}^{\omega,\infty} \,\leq\,||f||_{K,l}^{\omega}\,\leq\,C_2||f||_{K,2l}^{\omega,\infty} \,$$ and
 $$C_3||f||_{K,l}^{\omega,2}\,\leq \,
 ||f||_{K,l}^{\omega} \,\leq\,C_4||f||_{K,2l}^{\omega,2}\,.$$
 (when it is clear which norms we use we somtimes will skip the suplementary indexes in notations ).
 
 Thus the topological considerations can use any of those families of seminorms. 
 In this paper we will mainly use the first one. But in the last section we will uses mainly the third one 
 We will now remind the definitions of Beurling and Roumieu ultradifferentiable functions with our given weight function$\,\omega,$. We start with the test functions :
 \begin{definition}
a)Beurling case :
$ \,f\,$ belongs to $ \mathcal{D}^{(\omega)}\,$ if it has compact support K and 
$\,\forall l>0\,, ||f||_{K,l}^\omega \,< \,\infty.$
 
b)Roumieu case  :
$ \,f\,$ belongs to $ \,\mathcal{D}^{\{\omega\}}\,$ if it has compact support,$\,K\,$ and 
$$\,\exists l>0\,,\mbox{s.t.}\,
||f||_{K,l}^\omega \,<\,\infty \,$$

 The duals of the above spaces are the corresponding ultradistribution spaces.
 The spaces of the ultradifferentiable functions of those types ($\,\mathcal{E}^{(\omega)}\,$ and
  $\,\mathcal{E}^{\{\omega\}}\,$ respectively,) are the smooth functions $\,f\,$, such that, whenever 
 $\,\phi\,$  is a test function of the corresponding type, the product $\,\phi f\,$ is a test function of this type.
 
 Given a compact set K we define the norm 
 $\,\||f||_{K,l}^\omega\,$ as the minimum of 
 all $\||\phi f||_{K,l}^\omega $ where $\,\phi\,$ is a test fuction of the corresponding type which takes the value $1$ on the compact $\,K\,.$ (the same considerations hold for the other choices of seminorms proposed before)
 
 The duals of those spaces of test functions are the corresponding ultradistribution spaces.
 \end{definition}
 \begin{remark}
 Using the properties of the above seminorms one can see easily that the definition of those spaces does not depend on the choice of weight function inside a given weak equivalence class
 \end{remark}
 
 Our first theorem on projective presentations is the following:
 
 \begin{theorem}
 Consider a fixed weight function
 $\,\omega\,\in\,\mathcal{W}\,$.
  Then a function $\,\phi\,$ belongs to 
  $\,\mathcal{D}^{\{\omega\}}(\Omega])\,$ , if and only if 
  $$\,\forall\, K\,\subset\subset\,\Omega\,
  ,\forall\,\omega'\,<<\,\omega\,,
  ||\phi||_{K,1}^{\omega'}\,<\,\infty\,.$$
 
 \end{theorem}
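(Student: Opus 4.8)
The plan is to reduce both sides of the equivalence to pointwise growth conditions on $\hat\phi$ and then invoke the Fundamental Lemma (Lemma 2.6). Throughout, $\phi$ is a smooth function with compact support $K\subset\subset\Omega$, as every test function is, so $\hat\phi$ is continuous and the quantity $\|\phi\|_{K,1}^{\omega'}=\int|\hat\phi(\xi)|e^{\omega'(\xi)}d\xi$ depends only on $\phi$ and not on the particular compact set containing its support. Hence the quantifier ``$\forall K$'' in the statement is harmless, and I only need to control the behaviour of $\hat\phi$ at infinity.

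First I would translate both conditions into pointwise $O$-bounds. By Definition 3.1(b), $\phi\in\mathcal{D}^{\{\omega\}}$ means there is some $l>0$ with $\|\phi\|_{K,l}^{\omega}<\infty$. Using the comparison of seminorm families recalled above, $C_1\|\phi\|_{K,l}^{\omega,\infty}\le\|\phi\|_{K,l}^{\omega}\le C_2\|\phi\|_{K,2l}^{\omega,\infty}$ (valid for any weight in $\mathcal{W}$, hence for $\omega$ and for each $\omega'\ll\omega$), finiteness of the $L^1$-type seminorm for some $l$ is equivalent to finiteness of the sup-type seminorm for some $l$, i.e.\ to $|\hat\phi(\xi)|=O(e^{-l\omega(\xi)})$ for some $l>0$. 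Likewise the projective condition ``$\forall\omega'\ll\omega,\ \|\phi\|_{K,1}^{\omega'}<\infty$'' is equivalent to ``$\forall\omega'\ll\omega,\ |\hat\phi(\xi)|=O(e^{-\omega'(\xi)})$''; here one only needs that $e^{-\omega'(|\xi|)}$ is integrable over $\RR^n$, which follows from condition (c).

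Next I would pass from $O$ to $o$ to match the form of the Fundamental Lemma. On the inductive side, if $|\hat\phi|=O(e^{-l\omega})$ then, since $\omega(t)\to\infty$, one also has $|\hat\phi|=o(e^{-l'\omega})$ for every $0<l'<l$, while $o$ trivially implies $O$. On the projective side, given $|\hat\phi|=O(e^{-\omega'})$ for every $\omega'\ll\omega$, I would fix $\omega'\ll\omega$ and interpolate a weight $\omega''$ with $\omega'\ll\omega''\ll\omega$ (Theorem 2.4(c)); then $|\hat\phi|=O(e^{-\omega''})$ together with $\omega''-\omega'\to\infty$ yields $|\hat\phi|=o(e^{-\omega'})$. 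Thus both conditions acquire exactly the $o$-form needed.

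Finally I would apply part (b) of the Fundamental Lemma to the radial majorant $H(t)=\sup_{|\xi|=t}|\hat\phi(\xi)|$, a nonnegative continuous function on $\RR^+$ for which $H(t)=O(e^{-l\omega(t)})\iff|\hat\phi(\xi)|=O(e^{-l\omega(|\xi|)})$, and similarly with $\omega'$. Part (b) states precisely that ``$\exists\,l>0:\ H(t)=o(e^{-l\omega(t)})$'' is equivalent to ``$\forall\omega'\ll\omega:\ H(t)=o(e^{-\omega'(t)})$'', which, through the two reductions above, is exactly the asserted equivalence. The main obstacle is the nontrivial implication, from finiteness for all $\omega'\ll\omega$ to finiteness for some $l\omega$; but this is the hard direction of the Fundamental Lemma, whose contradiction argument builds an adversarial $\omega''$, so once the reduction to $H$ is in place the theorem follows. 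The only points demanding care are the multidimensional bookkeeping, i.e.\ replacing $\xi\in\RR^n$ by the scalar majorant $H$, and verifying that the cited seminorm comparisons and the integrability of $e^{-\omega'(|\xi|)}$ hold uniformly for every $\omega'\in\mathcal{W}$.
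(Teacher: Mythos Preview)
Your argument is correct. Both directions go through, and the reduction to the radial majorant $H(t)=\sup_{|\xi|=t}|\hat\phi(\xi)|$ together with the $O\leftrightarrow o$ bookkeeping and the interpolation via Theorem~2.4(c) is sound; the integrability of $e^{-\omega'(|\xi|)}$ does follow from condition~(c), and $2\omega'\in\mathcal{W}$ with $2\omega'\ll\omega$, so your passage from sup-bounds back to finiteness of the $L^1$-type norm is legitimate.

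The route, however, differs from the paper's. The paper never leaves the $L^1$-seminorms: for the nontrivial implication it argues directly by contradiction on the integrals, choosing $T_n$ so that $\int_{|\xi|<T_n}|\hat\phi(\xi)|e^{\frac{1}{n}\omega(|\xi|)}d\xi>n$ and simultaneously imposing the constraints from the proof of Theorem~2.4(b), thereby building an adversarial $\omega''\ll\omega$ for which $\|\phi\|_{K,1}^{\omega''}=\infty$. In other words the paper \emph{re-runs} the construction underlying the Fundamental Lemma inline, adapted to integrals rather than pointwise bounds. You instead convert to sup-norms via the seminorm comparison, massage $O$ into $o$, and then invoke the Fundamental Lemma as a black box. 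Your approach is more modular and makes the dependence on Lemma~2.6 explicit, at the cost of the extra norm-comparison and interpolation steps; the paper's approach is shorter and stays in a single seminorm family, but reproduces machinery already packaged in the lemma.
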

 
 \begin{proof}
 
 This result is also explicit in \cite{D.P.V.} (corolary 2) and probably also in explcit or implicite form in other papers ,  but we present also our proof for self consistency of the paper.
 As before we can suppose , without loss of generality, that $\,\omega\,$ is increasing to infinity, but 
 $\,\frac{\omega(t)}{t}\,$ is decreasing .
 Notice that if $\,\omega'\,<<\,\omega\,$ , for all $\,l\,>\,0\,$ , for $\,t\,$ large enough 
 $\,\omega'(t)\,\leq\, l\omega(t)$.
 Thus if
  $\,\phi\,\in\,\,\mathcal{D}^{\{\omega\}}(\Omega)\,$ i.e.
  $\,\exists\,l\,>\,0\,\mbox{s.t.}\,
 \,\int|\hat{\phi}(\xi)|e^{l\omega(|\xi|)}d\xi\,<\,\infty$
 
  then we also have
   $\,\int|\hat{\phi}(\xi)|e^{\omega'(|\xi|)}d\xi\,<\,\infty\,.$
  
    Conversely suppose that 
    $$\,\forall\,\omega'\,<<\,\omega\,
    ,\int|\hat{\phi}(\xi)|e^{\omega'(|\xi|)}d\xi
    \,<\,\infty\,.$$ 
    If it did not belong to
     $\,\mathcal{D}^{\{\omega\}}(\Omega)\,$
    this would imply that for any integer $\,n\,$; 
    
$\,
\int|\hat{\phi}(\xi)|e^{\frac{1}{n}\omega(|\xi|)}d\xi\,=\,\infty\,.$
    This would imply that we can construct inductively an increasing sequence $\,(T_n)\,$  such that  
$$\,\int_{|\xi|<T_n}|\hat{\phi}(\xi)|e^{\frac{1}{n}\omega(|\xi|)}d\xi\,>\,n\,,$$ and we can impose inductively in the construction of this sequence  the fact  that it satisfies all the conditions of the construction of 
$\,\omega"\,$
 in the proof of theorem 1.4 point b) of the existence of 
 $\,\omega"\,<<\,\omega\,.$
 With this function $\,\omega"\,$ thus constructed with the help of our sequence $\,(T_n)\,$
 we clearly have: 
 $\,\int|\hat{\phi}(\xi)|e^{\omega"(|\xi|)}d\xi
 \,=\,\infty\,$
 which contadicts the hypothesis. 
 Thus the two conditions are equivalent.

 \end{proof}

 \section{asymptotic scales and "generalisations" of topological vector spaces}
 
Most constructions of algebras following the ideas of Colombeau 
generalized functions are implicitely or explicitely  based on 
the notion of "asymptotic scales". 
 This notion was expicitely presented in \cite{D.S.1}, 
 \cite{D.S.2} and an analogous notion is implied in 
 \cite{D.H.P.V.}
 the constructions thus obtained are special cases 
 oftheconstructions of $(\mathcal{C},\mathcal{E},\mathcal{P})$-
 algebras introduced by A. Marti (\cite{M.1}
 and we give here a slightely more general  version of this 
 notion.   This notion will be used here with two different 
 assymptotic scales for the construction of algebras of 
 generalized functions as was also done in the "sequence model" 
 of ultradifferentiability in\cite{D.V.V.}.
 
 \begin{definition}
An asymptotic scale is  a family $ \mathcal{A}\,$ 
  of continuous strictly decreasing to zero functions from 
 $\,(o,1)\,$ to $\,\RR^+,$ ,indexed by a directed set ,$\,B\,$, directed by  a relation that we note 
 $\,<< \,$
  satisfying the two following conditions :
 
 $$\forall(l,m) \in \, \mathcal{B} \times\mathcal{B} , 
 m<<l\,, \rightarrow \, a_l(\varepsilon) = o(a_m(\varepsilon))$$
 
 $$ \forall(l,m) \in \, \mathcal{B} \times \mathcal{B} \, ,\exists
 \,k\in \mathcal{B},      \mbox{s.t.} \, a_k(\varepsilon) =  o(a_l(\varepsilon)a_m(\varepsilon))\,.$$

 \end{definition}
 Examples :
 The family $\{\varepsilon^n,n\in\NN \},$ is the first asymptotic scale considered in the simplified definition of Colombeau generalized functions.
 One can also easily verify that given a fixed weight function $\,\omega\,\in \,\mathcal{W}\,$ the families
  $\{e^{-k\omega(\frac{1}{\varepsilon})}\,, k\,>\,0\,\}$                    
 and
 $\{e^{-\omega'(\frac{1}{\varepsilon})}\
 ,\omega'<<\omega \}$ are asymptotic scales the first indexed by positive real numbers and the second by the weak equivalence classes set on which the strong inequality gives a relation that makes it a directed set.
 
 We will now present some basic notions in this frame and sketch how one can build "generalizations" of a topological metrisable  vector space using an asymptotic scale
  (with a "projective" definition)

 \begin{definition}

A strictly positive function $g: \RR^+ \,\rightarrow \RR^+$
is said to be  $ \mathcal{A}$-moderate if 
  $$\forall\,l \in \mathcal{B}\,,\mbox{large enough}\, ,    \exists m(l) \in\mathcal{B}\,,
  \mbox{such that} :\, g(\frac{1}{a_l(\varepsilon)}) =
   o(\frac{1}{a_{m(l)}(\varepsilon)}) ,$$
   It is said to be $ \mathcal{A}$-compatible if 
 there exists an increasing mapping
  $ u:\mathcal{B} 
\rightarrow \mathcal{B}$ s.t.the following properties hold
$$ \forall k\in \mathcal{B}\,, \exists l\in \mathcal{B}\,
 \mbox{s.t.},
u(l) >> k$$
$$ \forall k\in \mathcal{B}\,, g(a_k(\varepsilon)\,=
\,o(a_{u(k)}(\varepsilon))\,
  $$
\end{definition}

We now show the usual way to generalize a metrisable vector space with the help of a fixed given scale
 $\,(\mathcal{A},B)\,$ . Consider a metrisable vector space
 $ \,E\,$ topologized by a family $\,(\mu_i)\,i\in\,J$ the presentation being made with nets and the parameter
  $\varepsilon \\in\,(0,1)$ going to zero .
 
\begin{definition}

The family of $(\mathcal{A})-$moderate nets of $\,E\,$ is defined as :
\begin{equation*}
\mathcal{E}^{(\mathcal{A})}_M [E] = 
\left\{(g_{\varepsilon}) \in E^{(0,1)}\,:\, \forall \,
i\in I, \exists\,a\,\in\,\mathcal{A}\,,\mbox{s.t.}\,
\mu_i(g_\varepsilon)\,=\,
    o(a(\varepsilon)^{-1}).
    \right\}
\end{equation*} 
 While the family of negligible nets is defined as:
\begin{equation*}
\mathcal{N}^{(\mathcal{A})} [E] = \left\{(g_{\varepsilon}) \in E^{(0,1)}\,:\, \forall \,i\in J\, , \forall\,a\,\in\,\mathcal{A}\,,
\mu_i(g_\varepsilon)\,
=\,o((a(\varepsilon))
    \right\}.
\end{equation*} 

The generalized $(\mathcal{A})$-extension of $\,E\,$
is the factor space

$$\,\mathcal{G}^{\mathcal{A}}[E]\,=\,
\mathcal{E}^{(\mathcal{A})}_M [E]/\mathcal{N}^{(\mathcal{A})} [E]. $$
 
In this type of constructions there is a natural way of defining 
a topology, called sharp topology,analogous to what was done in 

usual Colombeau algebra (\cite{S.1},\cite{S.2},\cite{S.4}
 \cite{G.1}, \cite{G.2} This type of constructions was generaliszed in the case of other scales ( \cite{D.S.1}
 ,\cite{D.S.2}
 In our case we obtain a sharp topology by considering a basis  
of neighbourhoods of zero of the form
$$\,B(i,a)\,=\,\{(g_{\varepsilon)}\,\in E^{(0,1)}\,\mbox{s.t.}
\mu_i(g_\varepsilon)\,=\,o(a(\varepsilon)) \}$$
$a$ being an element of our scale. 
 One can easily verify that as the intersecion of all those "boxes" is the space of negligible nets this topology passes to the factor space and defines a hausdorf topology.

\end{definition}

  This kind of ideas can be extended on every topological vector space with  more general definitions  but in this paper we will mainly work with previous formulations . 
  
  \begin{definition}
 Let E be a topological vector space with a basis of neighbourhoods of zero $\mathcal{B}=\{B_i, i\in I \} $ we say that a net $\, (g_\varepsilon )\,$ is moderate if 
  
   $$ \forall i\in I , \exists a \in \mathcal{A} \mbox{s.t.}
  a(\varepsilon) g_\varepsilon  \in B_i$$ for $\varepsilon$ small enough 
  and we will say that it is negligible if 
  
  $$ \forall i\in I , \forall a \in \mathcal{A}\, 
  a(\varepsilon)^{-1}g_\varepsilon \,\in B_i$$ for 
  $\varepsilon$ smal enough .the $\mathcal{A}$ colombeau extension $\, \mathcal{G}^\mathcal{A} [E] \,$ is the factor space of the space of moderate nets by the space of negligible nets. 
   In similar way as befor we define the sharp topology 
   \end{definition}
  In this presentation it is also possible to define sharp topologies in an analogous way to what was done previously . 
   This is straughtforward but stays outside the scope of this paper

This construction has a "functorial" aspect and if 
$$ \L\,:\,E\,\rightarrow\,F$$ is a continuous linear mapping between vector spaces topologized by families of seminorms then there is a natural extension of $\,L\,$ between the respective generalizations,continuous for the respective sharp topologies.
  But there is a larger class of mappings that can pass through the generalisation process:
  \begin{definition}
A mapping $\,\Phi\,$ from a metrisable vector space $\,E\,$, topologized by a family $(\mu_i),i\in I$ of seminorms 
to a another metrisable vector space $\,F\,$ topologized by 
a family $(\nu_j),j\in J$ of seminorms is said to be
  $\mathcal{A}$-continuously tempered if 
for any seminorm $\nu_j$ among those which are used to topologize F,there exists a finite set $\,A\;\subset I, $  a 
$\mathcal{A}$-moderate function $ g $ and a
 $\mathcal{A}$-compatible function $h$ such that if for a finite subset $A$ of indices we  note
  $\,\mu_A \,=\,sup\{\mu_i,i\in A \},$
   then 
$\,\nu_i(\Phi(x)) \,
\leq\,g(\mu_A(x))\, $ and if 
$\,\mu_A(h)<1$ then
 $$\,|\nu(\Phi(x+h)-\Phi(x)|\,<\,g(\mu_A(x))h(\mu_A(l))\,.$$
\end{definition}

One can verify that: (see \cite{D.S.1}) .
\begin{proposition}

If $\,\Phi\,$ is a $\mathcal{A}$-continuously tempered mapping between the topological vector space $\,E\,$ topologized by the family $\,\{\mu_i,i\in I\,\}$ of seminorms to the topological vector space $\,F\,$ topologized by the family 
$\,\{\nu_j,j\in J\,\}$ of seminorms, then it defines 
 a mapping between $\mathcal{G}^{\mathcal{A}}[E]$
 to $\mathcal{G}^{\mathcal{A}}[F]$ which is continuous for the respective sharp topologies.
\end{proposition}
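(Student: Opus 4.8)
The plan is to establish the proposition in three stages: first that $\Phi$ carries $\mathcal{A}$-moderate nets of $E$ to $\mathcal{A}$-moderate nets of $F$, so that the image is legitimate; second that it respects the negligible ideal, so that it descends to a map $\widetilde{\Phi}:\mathcal{G}^{\mathcal{A}}[E]\to\mathcal{G}^{\mathcal{A}}[F]$; and third that $\widetilde{\Phi}$ is continuous for the sharp topologies. At every stage the two axioms of the scale $\mathcal{A}$ are the engine: the order axiom $m\ll l\Rightarrow a_l=o(a_m)$ together with the product axiom ($\forall l,m\,\exists k$ with $a_k=o(a_la_m)$) let me manufacture the indices I need and combine little-$o$ estimates. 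I would also read the definitions of $\mathcal{A}$-moderate and $\mathcal{A}$-compatible functions as referring to nondecreasing gauges $g$ and $h$ (as is implicit, or may be arranged by passing to suprema), so that monotone estimates compose with the seminorm bounds.

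For the first stage I would fix a moderate net $(x_\varepsilon)$ and a seminorm $\nu_j$ on $F$, and invoke $\mathcal{A}$-continuous temperedness to get the finite set $A\subset I$, the $\mathcal{A}$-moderate $g$ and the $\mathcal{A}$-compatible $h$. Since $A$ is finite and each $\mu_i(x_\varepsilon)=o(1/a_{l_i})$, iterating the product axiom produces a single index $L$, which may be taken as large as I wish, with $\mu_A(x_\varepsilon)=\sup_{i\in A}\mu_i(x_\varepsilon)=o(1/a_L)$. Monotonicity of $g$ and the bound $\nu_j(\Phi(x_\varepsilon))\le g(\mu_A(x_\varepsilon))$ then give $\nu_j(\Phi(x_\varepsilon))\le g(1/a_L)=o(1/a_{m(L)})$ by $\mathcal{A}$-moderacy of $g$; as $j$ was arbitrary, $(\Phi(x_\varepsilon))$ is moderate.

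For the second stage I would compare two representatives $x_\varepsilon$ and $x_\varepsilon+n_\varepsilon$ of one class, with $(n_\varepsilon)$ negligible. Negligibility forces $\mu_A(n_\varepsilon)\to 0$, so eventually $\mu_A(n_\varepsilon)<1$ and the increment clause applies: $\nu_j(\Phi(x_\varepsilon+n_\varepsilon)-\Phi(x_\varepsilon))\le g(\mu_A(x_\varepsilon))\,h(\mu_A(n_\varepsilon))$. The first factor is $o(1/a_{m(L)})$ as above. For the second, fix a target $p$; by the product axiom choose $q$ with $a_q=o(a_p\,a_{m(L)})$, and by the compatibility clause choose $k$ with $u(k)\gg q$. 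Since $\mu_A(n_\varepsilon)=o(a_k)$, monotonicity of $h$ gives $h(\mu_A(n_\varepsilon))\le h(a_k)=o(a_{u(k)})$ with $a_{u(k)}=o(a_q)=o(a_p\,a_{m(L)})$. Multiplying the two estimates collapses them to $o(a_p)$, and since $p$ and $j$ are arbitrary the difference is negligible; hence $\widetilde{\Phi}$ is well defined. Finally, for continuity at an arbitrary class $[x_\varepsilon]$, I would take a basic target neighbourhood $B(j,a_p)$ and, with the finite $A$ from temperedness, choose as source neighbourhood the finite intersection $V=\bigcap_{i\in A}B(i,a_s)$; for an increment $(\delta_\varepsilon)$ with $[\delta_\varepsilon]\in V$ one has $\mu_A(\delta_\varepsilon)=o(a_s)\to 0$, and repeating the increment estimate with $s$ chosen so that $u(s)\gg q$ makes $\nu_j(\Phi(x_\varepsilon+\delta_\varepsilon)-\Phi(x_\varepsilon))=o(a_p)$, i.e. $\widetilde{\Phi}([x_\varepsilon]+V)\subset\widetilde{\Phi}([x_\varepsilon])+B(j,a_p)$.

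I expect the main obstacle to be the index bookkeeping: coaxing, out of the order axiom, the product axiom and the compatibility map $u$, the precise indices that force the product of a moderate bound $o(1/a_{m(L)})$ with a compatible bound $o(a_{u(k)})$ to lie below an arbitrarily prescribed $a_p$. A secondary but essential point is that continuity must be argued through a finite intersection of the single-seminorm boxes $B(i,a)$, since each box controls only one $\mu_i$ whereas temperedness delivers the whole finite sup $\mu_A$; once that is recognized, the estimate is the same one used for well-definedness, only with the negligible increment replaced by an increment that is merely sharply small.
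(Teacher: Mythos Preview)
Your three-stage argument is correct and is exactly the ``straightforward (but fastidious)'' verification the paper alludes to: the paper does not supply a proof at all, it only references \cite{D.S.1} and remarks that a direct check goes through. Your bookkeeping with the product axiom, the cofinality of the compatibility map $u$, and the finite-intersection source neighbourhood is the right way to carry out that check; the only cosmetic point is that to bound $\mu_A(x_\varepsilon)=o(1/a_L)$ you really only need directedness of $\mathcal{B}$ (which is assumed), not the full product axiom, and the monotonicity assumption on $g,h$ is indeed harmless since one may always replace them by their running suprema without losing $\mathcal{A}$-moderacy or $\mathcal{A}$-compatibility.
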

the slight generalisation of our definition does not prevent the same proof to apply.Direct proof is straight forward (but fastidious)

 (this "functoriality" is usefull when we want to investigate "well posedness" in the case of problems modelized non linear equations)

\begin{remark}
Here we exposed the  " projective" construction but there is an alternative "inductive" strategy which will be used also in the sequel and we will present here for simplicity  only 
in the case of dimension one !
\end{remark}

 \begin{definition}
A net $\,(r_\varepsilon)$ of constants is said to be (inductively) moderate for the asymptotic  scale $\,\mathcal{A}\,$ 
if for all elements   $\,a(\varepsilon)\,$ of our scale
 
 $$\,|r_\varepsilon| \,=\,o(a(\varepsilon)^{-1})\,.$$
 It is said to be(inductively) negligible if there exists an element of the scale $\,a(\varepsilon)\,$
  such that
  $$\, |r_\varepsilon| \,=\,o(a(\varepsilon))\,  .$$
  
  The "inductively" generalized constants are now the factor space of "moderates" by the space of "negligibles".
    \end{definition}
 This type of "inductive" definitions that wll be used to embed Roumieu ultradistributions into an algebra do not give in a natural way a "sharp" topology as in the case of the "projective" constructions and for this reason we will have to find a "projective" presentationnwhenever this is possible !
 
  \section{Beurling-Bj\"{o}rk and Roumieu- Bj\"ork  generalized ultradifferentiable functions}

\subsection{Beurling case}
A) BEURLING CASE
We will now remind how was defined an algebra in which were embedded the Beurling-Bjork ultradistributions
(\cite{D.V.V.}) in the frame of a fixed given weight function
 $\omega\in\mathcal{W}$

\begin{definition}

The moderate nets of Beurling-$(\omega)$ ultradifferentiable functions are defined as the elements of $\mathcal{E}^{(\omega)}_M(\Omega)$ where:

\begin{equation*}
\mathcal{E}^{(\omega)}_M(\Omega) = \left\{(g_{\varepsilon}) \in \mathcal{E}^{(0,1)}_{(M_p)}\,:\,  \forall K \subset \subset \Omega,\,\forall h>0\,,\exists k>0 , \quad
    \| g_{\varepsilon}\|_{K,h}^{\omega} 
    =o(e^{k\omega(\frac{1}{\varepsilon})})\right\}
\end{equation*}  
while the negligible nets are  those belonging to 
The space $ \mathcal{N}^{(M_p)}(\Omega) $  defined by:
\begin{equation*}
\mathcal{N}^{(\omega)}(\Omega) = \left\{(g_{\varepsilon}) \in
 (\mathcal{E}^{(\omega)})^{(0,1)}
 \,:\, \forall K \subset \subset \Omega\,,
 \forall h>0, \forall k >0 \quad
\|g_{\varepsilon}|_ {K,h}^{\omega} =
 o(e^{-k\omega(\frac{1}{\varepsilon})}) \,\right\}
 \end{equation*}  

The factor space 
$$\mathcal{G}^{(\omega)}(\Omega) \,=\,
\mathcal{E}^{(\omega)}_M(\Omega)/\mathcal{N}^{(\omega)}(\Omega)$$
is the algebra of Beurling Bjork generalized ulradifferentiable functions.
The class of a net $(g_\varepsilon)$ will be noted 
$[(g_\varepsilon)]$

\end{definition}

Clearly this is the extension of the topological vector
space of Beurling Bjork $(\omega)$- ultradifferentiable functions using the assymptotic scale 
$$\,\{e^{-k\omega(\frac{1}{\varepsilon})}\,,
 k\in\RR^+\,\}$$
Let $(\Omega_n)$ be a countable exhausting sequence of relatively compact open subsets of $\Omega$ ,i.e 
 such that their union is $\Omega$  and
 $\,\Omega_n\,\subset\subset\,\Omega_{n+1}\,$
A countable basis for its Haudsdorff "sharp" topology can be defined considering the  family $\,B_{(n,m,l)}\,$
where
$$ \,B_{(n,m,l)}\,
=\,\{ [(g_\varepsilon)] \,\mbox{such that}\,
|g_\varepsilon|_{\Omega_n,m}^{(\omega)}\,
=\,o(e^{-l\omega(\frac{1}{\varepsilon})})\,,$$
and choosing a countable adequate collection of positive numbers  $m$ and $n$ .
As the intersection of those is the space of negligible nets this topology passes to our algebra which is the factor space, and thus defines a Hausdorff topology on this factor space.
 We can in this way define also the rings
 $$ \tilde{\CC}^{(\omega)}$$  and 
$$ \tilde{\RR}^{(\omega)}$$ 
 of Beurling(repsectively complex and real generalized constants and .
Likewise if we consider $\RR^d$ we have the Beurling generalized points of this space etc.

  In a more general way, if we replace for a topological space $E$ topologized by a family $(\mu_i, i\in I)$, of seminorms in the above definition the norms $||g_{\varepsilon}||_{K,h}$ by the seminorms  $\mu_i (g_{\varepsilon}|)$ we establish in the same way a "sharp" Hausdorff topology on the $(\omega)-$Beurling-Colombeau generalization of $E.$ As previously, if there is a countable sequence of seminorms defining the uniform structure and the topology of $E$ then there is a countable basis of neighbourhoods of zero. 


We have the following theorem:
  
\begin{proposition}
If $E,(\mu_i ,i\in I)$ is a topological vector space topologized by the family  $(\mu_i ,i\in I)$ of seminorms then its $(\omega)$-Beurling Colombeau generalization $\mathcal{G}^{(\omega)}[E]$ is a topological module over the ring of generalized constants.  If $E$ is a topological algebra then its generalisation  is also a topological algebra  over  the corresponding ring of generalized constants and thus $\mathcal{G}^{(\omega)} (\Omega)$ is a differential topological algebra over the ring   $\tilde{\mathbb{C}}^{(\omega)}$ of 
$\, (\omega)-$ Beurling generalized constants.
\end{proposition}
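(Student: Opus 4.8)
The plan is to verify, in order, that the module operations are well defined on the quotient, that they are continuous for the sharp topology, and then to specialise to $E=\mathcal{E}^{(\omega)}(\Omega)$. Throughout I use two facts already at hand: the seminorms $\mu_i$ are absolutely homogeneous and satisfy the triangle inequality, and the asymptotic scale $\{e^{-k\omega(1/\varepsilon)},\,k>0\}$ is multiplicatively stable, since $e^{-k\omega(1/\varepsilon)}e^{-k'\omega(1/\varepsilon)}=e^{-(k+k')\omega(1/\varepsilon)}$ is again in the scale; this is exactly the second axiom of an asymptotic scale. Recall that a constant net $(r_\varepsilon)$ is moderate when $|r_\varepsilon|=o(e^{k\omega(1/\varepsilon)})$ for some $k>0$ and negligible when $|r_\varepsilon|=o(e^{-k\omega(1/\varepsilon)})$ for every $k>0$.

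First I would check that scalar multiplication descends to the quotient. If $(r_\varepsilon)$ is moderate with $|r_\varepsilon|=o(e^{k_0\omega(1/\varepsilon)})$ and $(g_\varepsilon)\in\mathcal{E}^{(\omega)}_M[E]$ satisfies $\mu_i(g_\varepsilon)=o(e^{k_i\omega(1/\varepsilon)})$, then homogeneity gives $\mu_i(r_\varepsilon g_\varepsilon)=|r_\varepsilon|\,\mu_i(g_\varepsilon)=o(e^{(k_0+k_i)\omega(1/\varepsilon)})$, so the product is moderate. If either factor is replaced by a negligible net, the same homogeneity together with the moderateness bound on the surviving factor forces the product to be negligible: to reach a prescribed decay rate $l>0$ one absorbs the growth exponent $k_0$ (resp.\ $k_i$) by demanding decay at rate $l+k_0$ (resp.\ $l+k_i$) from the negligible factor. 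Hence scalar multiplication is a well-defined map $\tilde{\CC}^{(\omega)}\times\mathcal{G}^{(\omega)}[E]\to\mathcal{G}^{(\omega)}[E]$; taking $E=\CC$ shows in particular that $\tilde{\CC}^{(\omega)}$ is a ring.

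The heart of the matter is continuity for the sharp topology. Continuity of addition is immediate from the triangle inequality, so the essential point is continuity of scalar multiplication at a fixed pair $([r_{0,\varepsilon}],[g_{0,\varepsilon}])$. I would expand
\[
r_\varepsilon g_\varepsilon-r_{0,\varepsilon}g_{0,\varepsilon}=(r_\varepsilon-r_{0,\varepsilon})g_{0,\varepsilon}+r_{0,\varepsilon}(g_\varepsilon-g_{0,\varepsilon})+(r_\varepsilon-r_{0,\varepsilon})(g_\varepsilon-g_{0,\varepsilon})
\]
and bound each term through the basic sharp neighbourhoods $B(i,e^{-l\omega(1/\varepsilon)})$. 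Using the moderateness bounds $\mu_i(g_{0,\varepsilon})=o(e^{k_i\omega(1/\varepsilon)})$ and $|r_{0,\varepsilon}|=o(e^{k_0\omega(1/\varepsilon)})$ of the fixed point, the first term is handled by confining $[r_\varepsilon]$ to the neighbourhood of decay $l+k_i$, the second by confining $[g_\varepsilon]$ to $B(i,e^{-(l+k_0)\omega(1/\varepsilon)})$, and the third, a product of two small quantities, by splitting $l$ into two positive rates; this last step is exactly where multiplicative stability of the scale is invoked. The identical expansion, with homogeneity replaced by the submultiplicativity $\mu_i(xy)\le\mu_i(x)\mu_i(y)$ available in a locally multiplicatively convex algebra $E$, shows that the bilinear product of $E$ passes to the quotient and is sharp-continuous, so $\mathcal{G}^{(\omega)}[E]$ is a topological algebra over $\tilde{\CC}^{(\omega)}$.

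Finally I would specialise to $E=\mathcal{E}^{(\omega)}(\Omega)$, topologised by the countable family $\|\cdot\|_{\Omega_n,m}^{\omega}$. The one nontrivial input is that these seminorms are submultiplicative: from $\widehat{fg}=\hat f*\hat g$ and the subadditivity $\omega(\xi)\le\omega(\xi-\eta)+\omega(\eta)$, which factors $e^{l\omega(\xi)}\le e^{l\omega(\xi-\eta)}e^{l\omega(\eta)}$, Fubini gives
\[
\|fg\|_{K,l}^{\omega}\le\|f\|_{K,l}^{\omega}\,\|g\|_{K,l}^{\omega}.
\]
I expect this to be the main obstacle, since it is the only place where the defining subadditivity of $\omega$ is used in an essential way and one must also account for the cutoff entering the definition of the norms on $\mathcal{E}^{(\omega)}$. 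Granting it, the preceding paragraph makes $\mathcal{G}^{(\omega)}(\Omega)$ a topological algebra; and since condition (c) gives $\log|\xi|=o(\omega(\xi))$, each partial derivative $\partial_j$ is a continuous linear endomorphism of $E$ (with a shift $l\mapsto l+\delta$ in the index), so by the functoriality noted earlier it extends to a sharp-continuous derivation of $\mathcal{G}^{(\omega)}(\Omega)$ obeying the Leibniz rule. Thus $\mathcal{G}^{(\omega)}(\Omega)$ is a differential topological algebra over $\tilde{\CC}^{(\omega)}$.
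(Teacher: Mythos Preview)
Your argument is correct. The paper's own proof takes a different, much shorter route: it simply observes that the construction is a special case of Marti's $(\mathcal{C},\mathcal{E},\mathcal{P})$-algebra framework, so that the extension and sharp-continuity of linear and bilinear maps are inherited from that general theory, and then remarks that ``this can be also verified directly using the definitions.'' What you have written is exactly that direct verification---the three-term expansion for continuity of the bilinear product, the explicit use of multiplicative stability of the Beurling scale, and the submultiplicativity of $\|\cdot\|_{K,l}^{\omega}$ via subadditivity of $\omega$. Your approach is self-contained and makes visible precisely where each structural hypothesis on $\omega$ enters (subadditivity for the algebra product, condition (c) for the derivations); the paper's approach trades that transparency for a one-line appeal to existing literature. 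One small correction: for a general topological algebra $E$ the relevant estimate is $\mu_i(xy)\le C\,\mu_j(x)\mu_k(y)$ with possibly different indices $j,k$, not $\mu_i(xy)\le\mu_i(x)\mu_i(y)$; this changes nothing in the moderateness and sharp-continuity bookkeeping (the index shift is absorbed just as the exponent shift is), but it is worth stating accurately, and it also covers the cutoff issue you flagged for the $\mathcal{E}^{(\omega)}$ seminorms.
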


\begin{proof}
As our construction is a special case of $(\mathcal{C,E,P})$ algebra (see \cite{M.1}, \cite{D}, \cite{D.S.1} ) all linear and bilinear mappings can be naturally extended as continuoys mappings for the respective sharp topologies ,also into the new setting 
Thus we have topological modules and  a TOPOLOGICAL DIFFERENTIAL ALGEBRA for the sharp topology presented previously .but this can be also verified directly using the definitions..
\end{proof}
 as linear continuous mappings are a special case of continuous tempered mapping (or by easy direct computations) one can  verify that:

\begin{proposition}
If $P(D)$ is a $(\omega)$ ultradifferentiable linear operator it defines a continuous linear mappnig for the above defined sharp topologies of our algebra and  of the corresponding  ring of  generalized constants.
\end{proposition}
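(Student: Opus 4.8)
The plan is to reduce everything to a single symbol estimate and then invoke the functorial behaviour of continuous linear maps established above. Write $P(D)=\sum_\alpha a_\alpha D^\alpha$ for the given $(\omega)$ ultradifferential operator, so that on the Fourier side $\widehat{P(D)f}(\xi)=P(i\xi)\hat f(\xi)$, where $P(i\xi)=\sum_\alpha a_\alpha(i\xi)^\alpha$ is the symbol. The first thing I would record is the defining growth bound of a class $(\omega)$ operator: the symbol $P$ is entire and there are constants $C$ and $l_0\ge 0$ with $|P(i\xi)|\le C\,e^{l_0\omega(|\xi|)}$ for all $\xi$. For a plain finite order operator this is immediate, since the polynomial symbol is dominated by $e^{\delta\omega(|\xi|)}$ for every $\delta>0$ by condition c) of Definition 2.1 ($\omega(t)/\log(1+t)\to\infty$); in general it is exactly the characterising estimate of the class.

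Next I would prove that $P(D)$ is a continuous linear endomorphism of the underlying space $\mathcal{E}^{(\omega)}(\Omega)$ for the seminorms $\|\cdot\|_{K,h}^{\omega}$. Using the seminorm a) and the symbol bound gives, for $f\in\mathcal{D}^{(\omega)}$,
$$\|P(D)f\|_{K,h}^{\omega}=\int |P(i\xi)|\,|\hat f(\xi)|\,e^{h\omega(|\xi|)}\,d\xi\le C\int|\hat f(\xi)|\,e^{(h+l_0)\omega(|\xi|)}\,d\xi=C\,\|f\|_{K,h+l_0}^{\omega}.$$
To pass from the compactly supported case to $\mathcal{E}^{(\omega)}(\Omega)$ I would use that $P(D)$ is local: if $\chi$ is a cutoff of the class equal to $1$ on a neighbourhood of $K$, then $(\chi-1)f$ and all its derivatives vanish near $K$, so $P(D)f=P(D)(\chi f)$ on a neighbourhood of $K$; bounding the $\mathcal{E}$-seminorm of the left side by a $\mathcal{D}$-seminorm of $\chi'P(D)(\chi f)$ and absorbing the continuity of multiplication by the fixed cutoffs into the constant then yields $\|P(D)f\|_{K,h}^{\omega}\le C'\,\|f\|_{K',h+l_0}^{\omega}$ with $K'=\mathrm{supp}\,\chi$. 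This localisation, rather than the symbol estimate, is the one genuinely fiddly point, but it is routine given the multiplier structure of the algebra already established.

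Given the continuity of $P(D)$ on $\mathcal{E}^{(\omega)}(\Omega)$, the conclusion follows either from the functoriality recorded above (continuous linear maps, being a special case of the $\mathcal{A}$-continuously tempered maps of the preceding Proposition, extend to continuous maps between the Colombeau generalisations for the sharp topologies) or by the direct computation the paper alludes to. For the direct route I would check the three points in order using the displayed inequality. Moderateness: if $(g_\varepsilon)$ is moderate, then for the $k$ furnished at index $h+l_0$ one has $\|P(D)g_\varepsilon\|_{K,h}^{\omega}\le C'\|g_\varepsilon\|_{K',h+l_0}^{\omega}=o(e^{k\omega(1/\varepsilon)})$. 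Negligibility: the same estimate turns $o(e^{-k\omega(1/\varepsilon)})$ into $o(e^{-k\omega(1/\varepsilon)})$ for every $k$, so $P(D)$ descends to $\mathcal{G}^{(\omega)}(\Omega)$. Sharp continuity: choosing $n'\ge n$ so that $\Omega_{n'}$ contains the support of a cutoff around $\overline{\Omega_n}$, the inequality shows that $P(D)$ maps the basic neighbourhood $B_{(n',m+l_0,l)}$ into $B_{(n,m,l)}$, which by linearity gives continuity at $0$ and hence everywhere.

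Finally I would note the linearity over the ring of generalized constants. Since $P(D)$ has constant coefficients it acts only in the space variable, hence commutes with multiplication by any net of constants, so $P(D)(c_\varepsilon g_\varepsilon)=c_\varepsilon\,P(D)g_\varepsilon$ and the induced map is $\tilde{\CC}^{(\omega)}$-linear; if one allows coefficients in $\mathcal{E}^{(\omega)}$ the same argument applies after inserting the continuity of the corresponding multipliers. The only essential input is thus the symbol estimate of the first paragraph, everything else being the localisation bookkeeping and an appeal to functoriality.
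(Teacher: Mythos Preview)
Your proposal is correct and follows essentially the same approach as the paper: the paper's proof is a two-line appeal to functoriality, asserting that $P(D)$ is continuous on the base space $\mathcal{E}^{(\omega)}(\Omega)$ and then invoking the general fact that continuous linear maps extend to sharp-continuous maps between the generalisations. You simply spell out the details the paper leaves implicit, namely the symbol estimate $|P(i\xi)|\le C e^{l_0\omega(|\xi|)}$ and the resulting seminorm inequality $\|P(D)f\|_{K,h}^{\omega}\le C'\|f\|_{K',h+l_0}^{\omega}$, before applying the same functoriality principle (and optionally redoing it by hand).
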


\begin{proof}
Such an operator is linear and continuous for the topology of the basic Beurling space thus it extends to a continuous mapping as an application of the general property on generalisations of continuous linear mappings  via any given assymptotic scale.

\end{proof}
 \begin{remark} ..When the solution operator of a linear problem is also continuous it gives birth to a continuous linear operator in the new frame. Thus the solution operator , considered this time in the "generalized " frame, is also continuous and the problem is "well posed" also  in the generalized frame.
\end{remark}

\subsection{Roumieux case}  
                            
Let us now remind the construction of an algebra in the Roumieu case. In some sense it looks like the Beurling case but with a change of quantificators (it is an "inductive" construction).
\begin{definition}

The space of $\{\omega\}$ moderate nets of Roumieu ultradifferentiable functions is:
   \begin{equation*}
\mathcal{E}^{\{\omega\}}_M(\Omega) = \left\{(g_{\varepsilon}) \in \mathcal{E}^{{\{\omega\}}^{(0,1)}}\,:\, \forall K \subset \subset \Omega,\forall k>0\,, \exists h>0,  \mbox{s.t.}\,
    \| g_{\varepsilon}\|_{K,h}^{\omega} =o(e^{k\omega(\frac{1}{\varepsilon})})\right\} .
\end{equation*}   
The space of negligible ones is :

\begin{equation*}
\mathcal{N}^{\{\omega\}}(\Omega) =
 \left\{(g_{\varepsilon}) \in\,
  \mathcal{E}^{(0,1)}_{(M_p)}
 \,:\, 
 \forall K \subset \subset \Omega, \exists k>0\,\exists h>0 ,\quad
\|g_{\varepsilon}|_ {K,h}^{\omega} =
 o(e^{-k\omega(\frac{1}{\varepsilon})}) \,\right\},
 \end{equation*}
 
As previously the space of Roumieu generalized ultradifferentiable functions is the factor space of "moderates" by "negligibles" .

\end{definition}

\begin{remark}
An analogous shift of quantificators must be done for the "Roumieu extension " of any metrisable space. Thus the 
 Roumieux generalized constants
 $\,\tilde{\CC}^{\{\omega\}}\,$  will be the factor space of the space of "Roumieu moderate nets of constants , i.e. those that are
  $\,o(e^{m\omega (\frac{1}{\varepsilon})})\,$ for all
  $\,m>0\,$ 
  by the space of "Roumieu negligible nets ; i.e. those 
  that are
  $\,o(e^{-m\omega (\frac{1}{\varepsilon})})\,$ for some
  $\,m>0\,$ .
  
  \end{remark}
  
  In order to apply in the Roumieu case the ideas of sharp topologies  we have to use a  projective description of the "Roumieu Bj\"{o}rk" construction,. analogous to the one constructed for the construction of the Roumieu "$\,(M_p)\,$ case in  \cite{D.V.V.}.
  
  We will prove that the above definition of Roumieu Bj\"{o}rk algebra is equivalent to the following "projective" definition:
  \begin{definition}
 Let 
$$  \tilde{\mathcal{E}}^{\{\omega\}} (\Omega) = 
\{(g_{\varepsilon}) \in (\mathcal{E}^{\{\omega\}})^{0,1)} \mbox{s.t.} \forall \omega_1 <<\omega\,,
 \forall K \subset \subset \Omega, \exists \omega_2 <<\omega\, \mbox{s.t.}\,
  ||g_{\varepsilon}||_ {K,\omega_1}^1\, =
 \, o(e^{\omega_2(\frac{1}{\varepsilon})}) \}$$.

Likewise the set of negligible nets is defined by :
  
$$  \tilde{\mathcal{N}}^{\{\omega \}} (\Omega) =
  \{(g_{\varepsilon}) \in 
  (\mathcal{E}^{\{\omega\}})^{(0,1)} \mbox{s.t.},
 \forall K \subset \subset \Omega , \forall \omega_1<<
 \omega,
 \forall \omega_2 <<\omega\,,
 ||g_{\varepsilon}||_ {K,1}^{\omega_1}\,=
 \, o(e^{-\omega_2(\frac{1}{\varepsilon})}) \}\,.$$
 The algebra is now defined as the factor space of those two.
 \end{definition}
 
  The main result of this sectio, analogous to reuslts of the same kind for the sequence model in \cite{D.V.V.} and \cite{P.R.S.Z} is the equivalence of the two definitions:
  \begin{proposition}
 The above two definitions are  equivalent and thus:
   
 $$\,\mathcal{E}^{\{\omega\}}_M(\Omega)\,
 = \,\tilde{\mathcal{E}}^{\{\omega\}}_M (\Omega)\,
 \mbox{and}\,,
 \mathcal{N}^{\{\omega\}}(\Omega)\,
 =\,\tilde{\mathcal{N}}^{\{\omega \}} (\Omega)\,.$$
  
  \end{proposition}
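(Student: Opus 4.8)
The plan is to prove the two set equalities by establishing four inclusions, and in each one to separate the role of the Fourier variable $\xi$ from the role of the scale parameter $1/\varepsilon$. On the $\xi$–side the parameter $h$ in $e^{h\omega(\xi)}$ is to be traded against a weight $\omega_1\ll\omega$, and the elementary tool is the comparison: if $\omega_1\ll\omega$ then for every fixed $h>0$ one has $e^{\omega_1(\xi)}\le C_h\,e^{h\omega(\xi)}$ for $|\xi|$ large, so that $\|g\|_{K,1}^{\omega_1}\le C_h\,\|g\|_{K,h}^{\omega}$ up to an additive term carried by a fixed bounded set of frequencies. On the $1/\varepsilon$–side the parameter $k$ in $e^{\pm k\omega(1/\varepsilon)}$ is traded against a weight $\omega_2\ll\omega$, and this is governed precisely by the Fundamental Lemma applied to the function $t\mapsto \|g_{1/t}\|_{K,1}^{\omega_1}$ of $t=1/\varepsilon$ (should continuity of this function be needed, one replaces it by a continuous majorant, or notes that the Lemma's proof is only the inductive selection of a sequence $(T_n)$ and applies verbatim to nets).

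First I would dispose of the two \emph{soft} inclusions $\mathcal{E}^{\{\omega\}}_M\subseteq\tilde{\mathcal{E}}^{\{\omega\}}$ and $\mathcal{N}^{\{\omega\}}\subseteq\tilde{\mathcal{N}}^{\{\omega\}}$. For the moderate case, fix $K$ and $\omega_1\ll\omega$; given any $k>0$, choose the $h=h(k)$ furnished by moderateness, so that the comparison above gives $\|g_\varepsilon\|_{K,1}^{\omega_1}\le C_{h(k)}\|g_\varepsilon\|_{K,h(k)}^{\omega}=o(e^{k\omega(1/\varepsilon)})$. As this holds for every $k>0$, part a) of the Fundamental Lemma produces the required $\omega_2\ll\omega$ with $\|g_\varepsilon\|_{K,1}^{\omega_1}=o(e^{\omega_2(1/\varepsilon)})$. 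For the negligible case the same comparison turns the single bound $\|g_\varepsilon\|_{K,h}^{\omega}=o(e^{-k\omega(1/\varepsilon)})$ into $\|g_\varepsilon\|_{K,1}^{\omega_1}=o(e^{-k\omega(1/\varepsilon)})$, and part b) of the Fundamental Lemma upgrades this to $o(e^{-\omega_2(1/\varepsilon)})$ for \emph{every} $\omega_2\ll\omega$.

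The \emph{hard} inclusions $\tilde{\mathcal{E}}^{\{\omega\}}\subseteq\mathcal{E}^{\{\omega\}}_M$ and $\tilde{\mathcal{N}}^{\{\omega\}}\subseteq\mathcal{N}^{\{\omega\}}$ require controlling the \emph{full}-weight norm $\|\cdot\|_{K,h}^{\omega}$, which is \emph{not} dominated by the weaker norms $\|\cdot\|_{K,1}^{\omega_1}$, and I would treat them by contradiction using the $(T_n)$–construction of Theorem 1.4 b). Suppose, in the moderate case, that the conclusion fails for some fixed $k$: then for every $n$ the relation $\|g_\varepsilon\|_{K,1/n}^{\omega}=o(e^{k\omega(1/\varepsilon)})$ fails, so there are $\delta_n>0$ and $\varepsilon_n\downarrow0$ with $\int|\hat g_{\varepsilon_n}(\xi)|e^{\frac1n\omega(\xi)}\,d\xi\ge\delta_n e^{k\omega(1/\varepsilon_n)}$, and we may take $\varepsilon_n$ so small that $e^{\frac k2\omega(1/\varepsilon_n)}\ge n/\delta_n$. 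Truncating the integral at a radius $R_n$ (chosen so that the head still carries at least half the mass) and building a single $\omega_1\ll\omega$ with $\omega_1\ge\frac1n\omega$ on $\{|\xi|\le R_n\}$ — exactly the tapered construction of Theorem 1.4 b) — yields $\|g_{\varepsilon_n}\|_{K,1}^{\omega_1}\ge\tfrac12\delta_n e^{k\omega(1/\varepsilon_n)}$. For the $\omega_2\ll\omega$ provided by the projective hypothesis one has $e^{\omega_2(1/\varepsilon_n)}=o(e^{k\omega(1/\varepsilon_n)})$, and the quotient $\|g_{\varepsilon_n}\|_{K,1}^{\omega_1}/e^{\omega_2(1/\varepsilon_n)}\ge\tfrac12\delta_n e^{\frac k2\omega(1/\varepsilon_n)}\ge\tfrac n2\to\infty$, contradicting $\|g_\varepsilon\|_{K,1}^{\omega_1}=o(e^{\omega_2(1/\varepsilon)})$. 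The negligible case is the same scheme, except that here the $1/\varepsilon$–side must also be built: after obtaining $\int|\hat g_{\varepsilon_n}|e^{\frac1n\omega}\ge\delta_n e^{-\frac1n\omega(1/\varepsilon_n)}$ one constructs, in addition to $\omega_1$, a second weight $\omega_2\ll\omega$ that is large on the sparse points $1/\varepsilon_n$, namely $\omega_2(1/\varepsilon_n)\ge\frac1n\omega(1/\varepsilon_n)+\log(2n/\delta_n)$ (feasible because, choosing $\varepsilon_n$ small enough that $\log(2n/\delta_n)\le\frac1n\omega(1/\varepsilon_n)$, the target is $\le\frac2n\omega(1/\varepsilon_n)$ with coefficient tending to $0$); this forces $\|g_{\varepsilon_n}\|_{K,1}^{\omega_1}e^{\omega_2(1/\varepsilon_n)}\ge\tfrac12\delta_n e^{(\omega_2-\frac1n\omega)(1/\varepsilon_n)}\ge n\to\infty$, contradicting $\|g_\varepsilon\|_{K,1}^{\omega_1}=o(e^{-\omega_2(1/\varepsilon)})$.

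The main obstacle is precisely this hard direction, and within it the negligible inclusion, where a \emph{single} pair $(\omega_1,\omega_2)\ll\omega$ must be manufactured from one diagonal sequence $\varepsilon_n\to0$ so as to defeat the hypothesis simultaneously in the frequency and in the scale variables; the delicate point is the truncation at $R_n$, which localises the full $\omega$–growth in $\xi$ into a region where the tapered $\omega_1$ can still dominate $\frac1n\omega$, together with the choice of $\varepsilon_n$ small enough to absorb the \emph{a priori} unknown constants $\delta_n$ (and, in the negligible case, to make the $\log(2n/\delta_n)$ correction negligible against $\omega(1/\varepsilon_n)$). Once this is set up, the soft directions and the two applications of the Fundamental Lemma are routine, and the four inclusions combine to give $\mathcal{E}^{\{\omega\}}_M(\Omega)=\tilde{\mathcal{E}}^{\{\omega\}}(\Omega)$ and $\mathcal{N}^{\{\omega\}}(\Omega)=\tilde{\mathcal{N}}^{\{\omega\}}(\Omega)$.
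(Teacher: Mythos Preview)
Your proposal is correct and follows essentially the same route as the paper: the two ``soft'' inclusions are obtained from the comparison $\|g\|_{K,1}^{\omega_1}\le C_h\|g\|_{K,h}^{\omega}$ together with the Fundamental Lemma, while the two ``hard'' inclusions are proved by contradiction via the tapered $(T_n)$--construction of Theorem~2.4(b), producing a single $\omega_1\ll\omega$ on the frequency side (and, in the negligible case, a second $\omega_2\ll\omega$ on the scale side). Your bookkeeping with the constants $\delta_n$ is in fact tighter than the paper's (which writes ``$>2n\,e^{k\omega(1/\varepsilon_n)}$'' where only ``$>\delta_n\,e^{k\omega(1/\varepsilon_n)}$'' is justified by the failure of a little--$o$), and you should make explicit at the outset the reduction to compactly supported nets via partition of unity, since the Fourier-side estimates and the truncation at radius $R_n$ rely on it.
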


\begin{proof}
   As we are  in the non quasi analytic case we have partition of unity.Thus  we can, without loss of generality, suppose that   all the elements we consider have compact support inside  the interior of some fixed  compact set
   $\,K\subset\subset \Omega\,.$
   Let us suppose that
    $\,(g_\varepsilon)\,\in\,
    \tilde{\mathcal{E}}^{\{\omega\}}_M (\Omega)\,.$
    but it does not belong to 
    $\mathcal{E}^{\{\omega\}}_M(\Omega)\, .$
 One can verify that this would imply that: 
    $$
\,\exists\,k\,>\,0\,\mbox{s.t.}\,\forall\,n\,\in\,\NN\,
||g_\varepsilon||_{K,\frac{1}{n}}^\omega\,\mbox{is not}\,
o(e^{k\omega(\frac{1}{\varepsilon})})\,.$$

Thus there exists $\varepsilon_n $ as small as we want, such that: 

$$
\int|\hat{g}_{\varepsilon_n}(\xi)|e^{\frac{1}{n}\omega(\xi)}d\xi\,>\,2ne^{k\omega(\frac{1}{\varepsilon_n})}
$$
Now we can construct inductively a sequence
 $(\varepsilon_n)$ decreasing to zero and a sequence 
 $(T_n))$ increasing to infinity such that 
 $$\int_{|\xi|<T_n}|\hat{g_{\varepsilon_n}}(\xi)|e^{\frac{1}{n}\omega(\xi)}d\xi\,
 >\,ne^{k\omega(\frac{1}{\varepsilon_n})}\,.$$
 More over we can impose to the construction of the sequence $(T_n)$ all the conditions we used in the proof of the existence of $$ \omega"<<\omega  .$$
  Thus we can construct  such $\omega"$ verifying for every integer $n$ 
 $$ \,\int|\hat{g_{\varepsilon_n}}(\xi)|e^{\omega"(\xi)}d\xi\,>\,\int_{\xi<T_n}|\hat{g_{\varepsilon_n}}(\xi)|e^{\frac{1}{n}\omega(\xi)}d\xi\,>\,ne^{k\omega(\frac{1}{\varepsilon})}.$$
 This implies that
 $||g_\varepsilon||_{K,1}^{\omega"}$ is not
  $o(e^{k\omega(\frac{1}{\varepsilon})})$
  But for any given  $\omega'<<\omega$
   for n large enough (thus $\epsilon_n$ small enough)
   we have
    $e^{\omega'(\frac{1}{\varepsilon})}  < e^{k\omega(\frac{1}{\varepsilon})}\,.$ 
    This would imply that for  $$\forall \,\omega' <<\omega\,,
    ||g_\varepsilon||_{K,1}^{\omega"}\,\mbox{is not}\,
    o(e^{\omega'(\frac{1}{\varepsilon})})$$ 
    which contradicts the hypothesis.
     Conversely let us suppose that 
     $\,(g_\varepsilon)\,$ belongs to
     $ \mathcal{E}^{\{\omega\}}_M(\Omega) .$
     Thus for any $k>0$ there exists $l>0$ 
      such that 
      $$\,
      \int|\hat{g_{\varepsilon_n}}(\xi)e^{l\omega(\xi)}d\xi\,=\,o(e^{k\omega(\frac{1}{\varepsilon})})\,.$$
     
But we can easily see that for any $\omega ' <<\omega, $
 for any given $l>0\,,$ there exists $C_l>0\,$ such that for any positive function
 $$\int \tilde{g_\varepsilon}(\xi)e^{\omega'(\xi)}d\xi\,
 <\,C_l\int\tilde{g_\varepsilon}(\xi)e^{l\omega(\xi)}d\xi \,.$$ Thus 
 is $\,o(e^{k\omega(\frac{1}{\varepsilon})}).$
 As this holds for all $k>0$,
 we can conclude with the help of the fundamental lemma that there exists $ \,\omega"\,$
 such that 
 $$||g_\varepsilon||_{K,1}^{\omega'}\,
 =\,o(e^{\omega"(\frac{1}{\varepsilon})}).$$
 Thus we have proved the equivalence of the two definitions for moderateness.
  Now let us take care of negligibility:
  
  Let us suppose that
$\,(h_\varepsilon)\,\in\,\mathcal{N}^{\{\omega\}}(\Omega).$

This implies that there exist $l>0$ and $\, k>0$ such that 
$$\,||h_\varepsilon||_{K,l}^\omega\,=
\,o(e^{-k\omega(\frac{1}{\varepsilon})}).$$\

Consider some $\omega_1<<\omega\,.$
For given $l>0$ there exists a positive constant $\,C_l\,$
such that $ ||h_\varepsilon||_{K,1}^{\omega_1}\,<
\,C_l||h\varepsilon||_{K,l}^\omega\,=
\,o(e^{-k\omega(\frac{1}{\varepsilon})})$ i.e.
$$
e^{k\omega(\frac{1}{\varepsilon})}||h_\varepsilon||_{K,1}^{\omega_1})\rightarrow \,0.$$

For any $\,\omega"<<\omega$ and for any positive $k>0$, for $\,\varepsilon\,$ small enough we have:
$$\,\omega"(\frac{1}{\varepsilon})\,
<\,k\omega(\frac{1}{\varepsilon}).$$ Thus
$$e^{\omega"(\frac{1}{\varepsilon})}
||h_\varepsilon||_{K,1}^{\omega"}\,\rightarrow  0\,$$
i.e.
$$\,||h_\varepsilon||_{K,1}^{\omega_1}\,
=\,o(e^{-\omega"(\frac{1}{\varepsilon})})$$
This is exactly our second definition of negligibility!
  In order to prove the converse assertion  we will prove that if 
  $\,(h_\varepsilon)\,$ does not belong to 
  $\,\mathcal{N}^{\{\omega\}}(\Omega)$, then it cannot either belong to 
  $\,\tilde{\mathcal{N}}^{\{\omega\}}(\Omega)$
If  $\,(h_\varepsilon)\,$ does not belong to 
  $\,\mathcal{N}^{\{\omega\}}(\Omega)\,,$ this would imply that for any  couple of integgers $\,(m,n)\,$ 
  $\,||h_\varepsilon||_{K,\frac{1}{n}}^{\omega"}\,$ is not
  $\,o(e^{-\frac{1}{m}\omega(\frac{1}{\varepsilon}})\,.$
  This holds even if we chose two equal integgers  and thus $m = n$ This would imply that we can construct inductively a decreasing to zero sequence 
  $\,(\varepsilon_n)\,$ and an increasing to infinity sequence $\, (T_n)\,$ such that :
  $$\,e^{\frac{1}{n}\omega(\frac{1}{\varepsilon_n})}
  \int_{|\xi|<T_n}|\hat{g}(\xi)|e^{\frac{1}{n}\omega(|xi|)}d\xi\,>\,n\,.$$
  Those sequences can be constructed in a way respecting the prescriptions for the construction of $\omega"<<\omega$
  and thus obtain two weight functions 
  $\,\omega_1<<\omega$ and $\omega_2<<\omega$ 
  such that
  $\frac{1}{n}\omega(\frac{1}{\varepsilon}_n)\,=\,
  \omega_1(\frac{1}{\varepsilon}_n)\,$ and
  $ \frac{1}{n}\omega(T_n)\,=\,\omega_2(T_n)\,.$
 Now it is straightforward to conclude that 
 $\,||g_\varepsilon||_{K,1}^{\omega_2}\,$  is not 
$\,o(e^{-\omega_1(\frac{1}{\varepsilon})})\,$
thus $\,(g_\varepsilon)\, $ does not belong to 
$\,\tilde{\mathcal{N}}^{\{\omega\}}(\Omega)\,.$
\end{proof} 
(analogous results for the sequence model were already proved in 
 \cite{D.V.V.}.)

As we have now a "projective " presentation for the construction of our algebra it is easy to define on this algebra a "sharp" Hausdorff topology by defining as before a basis of neighbourhoods of zero: 

\begin{definition}
The Sharp Topology of $\mathcal{G}^{\{\omega\}}(\Omega)\,$
is the topology defined by a basis of neighbourhoods of 0 of the form 
$B_{K,\omega_1,\omega_2}$
where $K\subset\subset \Omega$ and $\omega_1<<\omega $ and 
$\omega_2<<\omega$ are two weight functions belonging to 
$\mathcal{W} \,$ defined,as follows:

$$  B_{K ,\omega_1,\omega_2} = \{g = [(g_{\varepsilon})] \mbox{s.t}  |g_{\varepsilon}||_ {\omega_1} =
 o(e^{-\omega_2(\frac{1}{\varepsilon})})  \}         .  $$ 
\end{definition}

  This construction is analogous to the construction of a sharp topology in the Beurling case but there is a very important difference, because :
   
   \begin{proposition}
 There cannot exist a countable basis for the sharp topologies in Roumieu case 
 \end{proposition}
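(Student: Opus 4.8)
The plan is to argue by contradiction, reducing the statement to the impossibility of a countable cofinal family of weight functions below $\omega$. Suppose $\{V_n\}_{n\in\NN}$ were a countable basis of neighbourhoods of $0$ for the sharp topology of $\mathcal{G}^{\{\omega\}}(\Omega)$. Each $V_n$ contains a basic neighbourhood $B_n=B_{K_n,\omega_{1,n},\omega_{2,n}}$ with $\omega_{1,n}<<\omega$ and $\omega_{2,n}<<\omega$. Since $B_n\subseteq V_n$, it is enough to construct a single neighbourhood $U$ of $0$ such that $B_n\not\subseteq U$ for every $n$: for then $V_n\supseteq B_n$ forces $V_n\not\subseteq U$, contradicting the assumption that $\{V_n\}$ is a basis. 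Thus I only have to defeat the countably many basic neighbourhoods $B_n$.

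The core is a domination lemma for the strong inequality: given any sequence $(\omega_{1,n})$ in $\mathcal{W}$ with $\omega_{1,n}<<\omega$, there exists $\omega^{*}\in\mathcal{W}$ with $\omega^{*}<<\omega$ and $\omega_{1,n}<<\omega^{*}$ for all $n$. I would obtain $\omega^{*}$ by the same piecewise diagonal construction used to produce $\omega''$ in the proof of part (b) of the theorem on the strong inequality in Section 2: on an increasing sequence of intervals $[T_k,T_{k+1}]$ I let $\omega^{*}$ dominate $\max_{n\le k}\omega_{1,n}$ while keeping $\omega^{*}(t)\le\tfrac1k\,\omega(t)$ there, imposing the usual requirements (increasing, $\omega^{*}(t)/t$ decreasing, integrability of $\omega^{*}(t)/t^{2}$, and $\omega^{*}(t)/\log(1+t)\to\infty$) exactly as in Section 2. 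The resulting $\omega^{*}$ lies strictly below $\omega$ yet strictly above every $\omega_{1,n}$.

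I then set $U=B_{K,\omega^{*},\omega_2}$, where $K$ is a fixed compact with nonempty interior containing the relevant $K_n$ (using the countable exhaustion) and $\omega_2<<\omega$ is arbitrary, and I exhibit for each $n$ an element of $B_n\setminus U$. The witness is a modulation net: fix $\phi\in\mathcal{D}^{\{\omega\}}(\Omega)$ with $\phi\equiv1$ on $K$, pick frequencies $\Xi_n(\varepsilon)\to\infty$, and put $g^{(n)}_\varepsilon=a_n(\varepsilon)\,e^{i\Xi_n(\varepsilon)x}\phi(x)$, so that $\widehat{g^{(n)}_\varepsilon}(\xi)=a_n(\varepsilon)\widehat{\phi}(\xi-\Xi_n(\varepsilon))$. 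Subadditivity of the weights together with $\|\phi\|_{K,1}^{\omega'}<\infty$ for every $\omega'<<\omega$ (Theorem 3.3) gives $\|g^{(n)}_\varepsilon\|_{K_n,1}^{\omega_{1,n}}\asymp a_n(\varepsilon)\,e^{\omega_{1,n}(\Xi_n(\varepsilon))}$ and $\|g^{(n)}_\varepsilon\|_{K,1}^{\omega^{*}}\gtrsim a_n(\varepsilon)\,e^{\omega^{*}(\Xi_n(\varepsilon))}$. Choosing $a_n(\varepsilon)$ so that the first quantity is $o(e^{-\omega_{2,n}(1/\varepsilon)})$ places $g^{(n)}$ in $B_n$, while the second is then comparable to $e^{\omega^{*}(\Xi_n(\varepsilon))-\omega_{1,n}(\Xi_n(\varepsilon))}\,e^{-\omega_{2,n}(1/\varepsilon)}$; since $\omega^{*}(\Xi)-\omega_{1,n}(\Xi)\to\infty$, letting $\Xi_n(\varepsilon)$ grow fast enough to outpace $\omega(1/\varepsilon)$ forces this to fail $o(e^{-\omega_2(1/\varepsilon)})$, so $g^{(n)}\notin U$.

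The main obstacle is precisely this witness construction: one must simultaneously keep the $\omega_{1,n}$-seminorm negligible (to remain in $B_n$) and the $\omega^{*}$-seminorm non-negligible (to escape $U$), all while the $g^{(n)}_\varepsilon$ stay genuine compactly supported Roumieu functions and the estimates pass to the quotient. The modulation trick is what makes this tractable, since it concentrates everything at the single frequency scale $\Xi_n(\varepsilon)$ and reduces the matter to the elementary gap $\omega^{*}(\Xi)-\omega_{1,n}(\Xi)\to\infty$ supplied by the domination lemma; the remainder is bookkeeping with subadditivity and the seminorm comparisons of Section 3. With $U$ so constructed, no $B_n$, hence no $V_n$, is contained in $U$, contradicting the basis property; therefore the Roumieu sharp topology admits no countable basis of neighbourhoods of $0$.
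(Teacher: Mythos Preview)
Your reduction to basic neighbourhoods is correct, and your domination lemma is exactly the combinatorial core of the paper's proof. The gap is in the witness step. You need $g^{(n)}$ to be an element of $\mathcal{G}^{\{\omega\}}(\Omega)$, hence represented by a \emph{moderate} net; but you only dominate the sequence $(\omega_{1,n})$ and leave $\omega_2$ ``arbitrary'', so to overcome an $\omega_{2,n}$ that may sit just below $\omega$ you are forced to let $\omega^{*}(\Xi_n(\varepsilon))-\omega_{1,n}(\Xi_n(\varepsilon))$ outpace $\omega(1/\varepsilon)$. With your own lower bound $\|g^{(n)}_\varepsilon\|_{K,1}^{\omega^{*}}\gtrsim a_n(\varepsilon)e^{\omega^{*}(\Xi_n(\varepsilon))}$ and your choice of $a_n$, this gives
\[
\|g^{(n)}_\varepsilon\|_{K,1}^{\omega^{*}}\;\gtrsim\; e^{\,\omega^{*}(\Xi_n(\varepsilon))-\omega_{1,n}(\Xi_n(\varepsilon))-\omega_{2,n}(1/\varepsilon)}\;\ge\; e^{\,\omega(1/\varepsilon)-\omega_{2,n}(1/\varepsilon)}\;=\;e^{(1-o(1))\omega(1/\varepsilon)},
\]
which is not $o(e^{\omega''(1/\varepsilon)})$ for any $\omega''<<\omega$. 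So $(g^{(n)}_\varepsilon)$ is not moderate, does not define a class in $\mathcal{G}^{\{\omega\}}(\Omega)$, and cannot serve as a witness in either $B_n$ or $U$.

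The repair is easy: apply your domination lemma to the sequence $(\omega_{2,n})$ as well (or to the union of both sequences), getting $\omega^{**}$ with $\omega_{1,n},\omega_{2,n}<<\omega^{**}<<\omega$, and take $U=B_{K,\omega^{**},\omega^{**}}$. Then no modulation is needed at all: the scalar net $g^{(n)}_\varepsilon=e^{-2\omega_{2,n}(1/\varepsilon)}\phi$ is trivially moderate, lies in $B_n$ since $e^{-2\omega_{2,n}}=o(e^{-\omega_{2,n}})$, and escapes $U$ since $e^{-2\omega_{2,n}(1/\varepsilon)+\omega^{**}(1/\varepsilon)}\to\infty$. The paper bypasses the whole witness construction by arguing directly on the index set: a countable basis would produce a countable cofinal family in $\{\omega'\in\mathcal{W}:\omega'<<\omega\}$, and the domination construction (identical to yours) shows no such family exists.
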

 
 \begin{proof}
 If such a basis did exist one can see that this would  imply  the existence of a sequence $(\omega_n)$  all stricly smaller than $\,\omega,$ , such  
 $\,\omega_n<<\omega_{n+1} \,$ and such that
 for any $\omega"<<\omega$ there would exist $n$ such that 
 $\,\omega"<<\omega_n\,$
  We can choose without loss of generality, on each weak equivalence class of the elements of the sequence, an element which is increasing and when divided by $x$ becomes a decreaising to zero function and such that 
  $\,\int_1^\infty \frac{\omega_n(t)}{t^2}dt\,
  = \,\frac{1}{2^n}$
  we will now construct a weight function
   $\omega" \in \mathcal{W},\, \omega""<<\omega\,$ 
  such that for any elements $\omega_n$ of our sequence we have
  $\,\omega_n \,<<\,\omega".$
  This will prouve our claim.
  
  We can construct inductively an increasing sequence 
  $\,(T_n)\,$ such that 
  $$\,\forall\,t\,\geq\,T_n\,,\omega_n(t)\,
  \leq\,\omega_{n+1}(t)\,\leq\,\frac{1}{n+1}\omega(t)$$
  On each interval $\,(T_n,T{n+1})\,$ we can consider the affine function $\,A_n(t)\,$ such that
   $\,A_n(T_n)\,=\,\omega_n(T_n)\,$ 
   and $\,A_n(T_{n+1})\,=\,\omega_{n+1}(T_{n+1})$
   We can also impose in the inductive choice of $T_n$ that 
   $\,\frac{A_n(T_{n+1})}{T_{n+1}},\leq\,
   \frac{A_n(T_n)}{T_n} $
   Consider now the continuous function $\omega"\,$
   which on each interval $\,(T_n,T_{n+1})\,$ is defined by $\,\omega"(t)\,=\,\sup (\omega_n(t),A_n(t))\,$
   It is now straightforward to verify that
   $\,\omega"\,\in\,\mathcal{W}\,$ and that 
   $\,\forall\,n\in\NN\,\omega_n<<\omega"<<\omega\,.$ 
   \end{proof}
  \subsection{negligibility criteria}

   Let us now present and prove an important simplification of the characterisation of "negligible nets analogous to what is done in the usual Colombeau case or in ultradiferentiable cases with a definition by sequences \cite{D.V.V.}
   
   \begin{theorem}
 A necessary and sufficient condition for a moderate net 
 $(g_\varepsilon)$ with compact support to be negligible both in  Beurling and Roumieu case is that 
 $$ [(||g_\varepsilon||_2)] \,=\,0$$
 of course this means that even for elements which do not have compact support the negligability of the nets of local $L_2$ norms is enough to conclude to the neglibality of our net 
 \end{theorem}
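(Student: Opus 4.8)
The plan is to prove both implications, carrying out all the analysis in the Hilbertian family $\|\cdot\|_{K,l}^{\omega,2}$ (family c), and to convert to the norms $\|\cdot\|_{K,l}^{\omega}$ that appear in the definitions only at the very end, through the comparison constants $C_3,C_4$. Since we are in the non quasi-analytic case, a partition of unity reduces the statement to nets supported in a fixed compact $K$, so I may assume each $g_\varepsilon$ is supported in $K$ and use Plancherel to identify $\|g_\varepsilon\|_2$ with the endpoint $\|g_\varepsilon\|_{K,0}^{\omega,2}=(\int|\hat g_\varepsilon(\xi)|^2d\xi)^{1/2}$ of that very family. With this in place, the implication ``negligible $\Rightarrow$ $L^2$-negligible'' is immediate: because $\omega\ge 0$ the weighted $L^2$ norm is nondecreasing in the weight, so $\|g_\varepsilon\|_2=\|g_\varepsilon\|_{K,0}^{\omega,2}\le\|g_\varepsilon\|_{K,l}^{\omega,2}\le C_3^{-1}\|g_\varepsilon\|_{K,l}^{\omega}$ for every $l>0$, and feeding in the decay granted by Beurling (resp.\ Roumieu) negligibility gives $[(\|g_\varepsilon\|_2)]=0$ in $\tilde{\CC}^{(\omega)}$ (resp.\ $\tilde{\CC}^{\{\omega\}}$).

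The substance is the converse, whose engine is the log-convexity of the weighted $L^2$ norms in the weight parameter. For $0<h<H$ and $\theta=h/H$, applying H\"older's inequality (exponents $1/(1-\theta)$ and $1/\theta$) to the factorization $|\hat g|^2 e^{2h\omega}=(|\hat g|^2)^{1-\theta}(|\hat g|^2 e^{2H\omega})^{\theta}$ yields
$$\|g\|_{K,h}^{\omega,2}\,\le\,\big(\|g\|_2\big)^{1-\theta}\,\big(\|g\|_{K,H}^{\omega,2}\big)^{\theta},\qquad \theta=\frac{h}{H},\ 0<h<H.$$
Thus the negligible $L^2$ norm is amplified by the power $1-\theta$, while the only potentially large quantity, $\|g\|_{K,H}^{\omega,2}$, enters with the small exponent $\theta$ and is controlled by the moderateness hypothesis.

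In the Beurling case I would fix a target weight $h$ and a target decay rate $m$, set $H=2h$ so that $\theta=\tfrac12$, and use Beurling moderateness (for all $H$ there is a $k'$) to get $\|g_\varepsilon\|_{K,H}^{\omega,2}=o(e^{k'\omega(\frac{1}{\varepsilon})})$; since the hypothesis $[(\|g_\varepsilon\|_2)]=0$ supplies $\|g_\varepsilon\|_2=o(e^{-k\omega(\frac{1}{\varepsilon})})$ for \emph{every} $k$, choosing $k$ with $k(1-\theta)\ge k'\theta+m$ makes the interpolated exponent $-k(1-\theta)+k'\theta\le -m$, whence $\|g_\varepsilon\|_{K,h}^{\omega,2}=o(e^{-m\omega(\frac{1}{\varepsilon})})$. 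As $h,m$ were arbitrary, the equivalence $\|g_\varepsilon\|_{K,h}^{\omega}\le C_4\|g_\varepsilon\|_{K,2h}^{\omega,2}$ then gives Beurling negligibility. In the Roumieu case the hypothesis only yields $\|g_\varepsilon\|_2=o(e^{-k_0\omega(\frac{1}{\varepsilon})})$ for a single $k_0>0$, but I only need to exhibit one admissible pair. Here I would fix any rate $\kappa$ (say $\kappa=1$), use Roumieu moderateness (for all $\kappa$ there is a weight) to obtain $H$ with $\|g_\varepsilon\|_{K,H}^{\omega,2}=o(e^{\kappa\omega(\frac{1}{\varepsilon})})$, and then choose $h>0$ small enough that $\kappa h<k_0(H-h)$, i.e.\ $\kappa\theta<k_0(1-\theta)$; the interpolation produces the negative exponent $-k_0(1-\theta)+\kappa\theta=-k<0$, and $C_4$ converts this into the single weight--rate pair witnessing Roumieu negligibility.

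The one genuinely delicate point is the bookkeeping of quantifiers, since the asymmetry between ``for all $k$'' (Beurling) and ``for some $k$'' (Roumieu) is exactly what has to be balanced against the interpolated exponent $-k_0(1-\theta)+\kappa\theta$. In Beurling the freedom to take $k$ arbitrarily large absorbs any moderateness rate $k'$ at a fixed $\theta=\tfrac12$; in Roumieu one cannot enlarge the single available $k_0$, so instead one shrinks $\theta$ (that is, $h$) toward $0$ until $k_0(1-\theta)$ dominates $\kappa\theta$. The rest is the routine verification that these interpolated estimates, combined with the monotonicity of the weighted norms and the constants $C_3,C_4$, reproduce verbatim the two definitions of negligibility, the compact-support reduction and Plancherel being precisely what make $\|g_\varepsilon\|_2$ the $H=0$ endpoint of the family in which we interpolate.
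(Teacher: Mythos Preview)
Your proposal is correct and follows essentially the same approach as the paper: both proofs work in the $L^2$-weighted family, invoke Plancherel after the compact-support reduction, and use H\"older/Cauchy--Schwarz to interpolate between $\|g_\varepsilon\|_2$ and a higher weighted norm controlled by moderateness. The paper fixes $\theta=\tfrac12$ throughout (writing $\int e^{k\omega}|\hat g|^2\le(\int e^{2k\omega}|\hat g|^2)^{1/2}(\int|\hat g|^2)^{1/2}$) and in the Roumieu case exploits the freedom in the moderateness rate $m<l$ rather than shrinking $\theta$ as you do, but this is only a cosmetic difference in which free parameter is tuned.
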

   
 \begin{proof}
  In both cases we will use thebequivalent family of norms modelled from $L_2$ evaluations 
 1) Beurling case:
 
 By Hypothesis :
 $$\forall l>0\,,\exists\,C_l>0\,, \mbox{s.t.} 
 (\int |g_\varepsilon(x)|^2dx)^{\frac{1}{2}} \,
  <\,C_le^{-l\omega(\frac{1}{_\varepsilon})} .$$
 
 But as our net has compact support this is equivalent to:
 $$\forall l>0\,\exists\,C_l>0\,\mbox{s.t.} 
 (\int |\hat{g}_\varepsilon(|\xi|)|^2 d\xi))^{\frac{1}{2}} \,<\,C_le^{-l\omega(\frac{1}{_\varepsilon})}.$$
  
 Now by h\"{o}lder inequality we can conclude that 
 
 \begin{equation*}
\int e^{k\omega(|\xi|)}|\hat{g}_\varepsilon(\xi)|^2 d\xi) \,
  \leq \,(\int e^{2k\omega(|\xi|)}
  |\hat{g}_\varepsilon(\xi)|^2 d\xi)^{\frac{1}{2}}
 (\int |\hat{g}_\varepsilon(\xi)|^2 d\xi)^{\frac{1}{2}} \,
 \end{equation*}
 As the net is moderate there exists $m>0$ such that 
 $$( \int e^{2k\omega(|\xi|)} |\hat{g}_\varepsilon(\xi)|^2d\xi )^{\frac{1}{2}} \,=\,
 o(e^{m\omega(\frac{1}{\varepsilon})}) .$$
 Thus for any chosen $k>0$ and any chosen $l>0$ we can conclude that :
 
 $$ \int e^{k\omega(|\xi|)}|\hat{g}_\varepsilon(\xi)|^2 d\xi
 \,=\,o(e^{(-l+m)\omega(\frac{1}{\varepsilon})}) .$$
 As this holds for any choice of $k$ and as we can choose $l$ as large as we need we can now easily conclude.
 
  Roumieu case:
  
  In this case the hypothesis implies that: 
  \begin{equation*}
 \exists  l>0\,,\exists\,C_l>0\,,\mbox{s.t.} \,
(\int |\hat{g}_{\varepsilon}(\xi)|^2 d\xi)^{\frac{1}{2}} \,
  <\,C_le^{-l\omega(\frac{1}{\varepsilon})}.
  \end{equation*}

   The moderateness here can be translated into:
   \begin{equation*} 
  \forall \,m>0 \exists \,k>0 \mbox{s.t}.
 (\int (e^{2k\omega(|\xi|)}|\hat{g}_\varepsilon(\xi)|^2dx)^{\frac{1}{2}} \,=\,
 o(e^{m\omega(\frac{1}{\varepsilon})}) .
 \end{equation*}

 Again with H\"{o}lder's inequality and choosing $m <l$, we can conclude in the same way as before that:
 
 $$ \exists k>0\,,\exists\, \nu = l-m>0$$
 such that:
 $$ ,,\int e^{2k\omega(|\xi|)}|\hat{g}_\varepsilon(\xi)|^2d\xi
 \,=\,o(e^{-\nu\omega(\frac{1}{\varepsilon})})$$
 which in Roumieux case implies that our net is negligible (in Roumieu sense).
 \end{proof}
  
 \subsection{functorial aspects}
 
 As in all Colombeau type "generalizations" using a given 
 asymptotic scale, the way to define which mappings can be
 extended depends on the set of "moderate" fuctions for the given scale 
 in both our projective constructions, using asymptotic scales in 
  Beurling and Roumieu cases,we will prove that:
  \begin{proposition}
 for both asymptotic scales, the one used in Beurling case and the one used in the projective presentation of Roumieu case a moderate function cannot be of more than polynomial growth.
  \end{proposition}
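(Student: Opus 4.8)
The plan is to treat the two scales separately, since the Beurling scale is built from a single weight $\omega$ while the Roumieu scale mixes different weights $\omega'<<\omega$; only the latter needs real work. For the Beurling scale $\{a_k(\varepsilon)=e^{-k\omega(1/\varepsilon)},\,k>0\}$ I would argue directly. Let $g$ be $\mathcal{A}$-moderate, fix one admissible index $l>0$, and let $m=m(l)$ be as in the definition, so that $g(1/a_l(\varepsilon))=o(1/a_m(\varepsilon))$, i.e. $g(e^{l\omega(1/\varepsilon)})=o(e^{m\omega(1/\varepsilon)})$. Put $y=e^{l\omega(1/\varepsilon)}$; since $\omega$ is continuous, nondecreasing and tends to infinity (condition c), the quantity $\omega(1/\varepsilon)$ runs through all large values as $\varepsilon\to 0$, so $y\to\infty$ monotonically and covers all large reals. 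Because $\omega(1/\varepsilon)=(\log y)/l$, the right-hand side is $y^{m/l}$, whence $g(y)=o(y^{m/l})$. Thus $g$ is dominated by a fixed power of $y$, i.e. it has at most polynomial growth.

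For the Roumieu scale $\{e^{-\omega'(1/\varepsilon)},\,\omega'<<\omega\}$ the same substitution gives, for an admissible $\omega_l<<\omega$ and the associated $\omega_m<<\omega$, the bound $g(e^{\omega_l(t)})=o(e^{\omega_m(t)})$ with $t=1/\varepsilon\to\infty$. Writing $h(s)=\log g(e^{s})$, moderateness yields, for every large enough $\omega_l<<\omega$, some $\omega_m<<\omega$ with $h(\omega_l(t))\le\omega_m(t)$ eventually; since $\omega_m<<\omega$ this already forces the clean consequence $h(\omega_l(t))=o(\omega(t))$. The strategy is then to prove the contrapositive: if $g$ grows faster than every polynomial, I produce a single admissible $\omega_l<<\omega$ for which $h(\omega_l(t))\neq o(\omega(t))$, contradicting moderateness.

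Concretely, super-polynomial growth supplies a sequence $y_j\to\infty$ with $\log g(y_j)\ge \phi_j\log y_j$ and $\phi_j\to\infty$. Setting $s_j=\log y_j$ and choosing, by continuity of $\omega$, points $t_j$ increasing to infinity with $\omega(t_j)=\sqrt{\phi_j}\,s_j$, I would build a weight $\omega_l\in\mathcal{W}$ interpolating the data $\omega_l(t_j)=s_j$, using the piecewise-affine, ``$\omega_l(x)/x$ decreasing'' constructions already employed in Theorem 1.4 and in the Fundamental Lemma. Since $s_j=\omega(t_j)/\sqrt{\phi_j}=o(\omega(t_j))$, this $\omega_l$ can be arranged so that $\omega_l<<\omega$, while $h(\omega_l(t_j))=\log g(y_j)\ge \phi_j s_j=\sqrt{\phi_j}\,\omega(t_j)$, so that $h(\omega_l(t))/\omega(t)\to\infty$ along $(t_j)$ and hence $h(\omega_l(t))\neq o(\omega(t))$. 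To be sure the offending index is large enough (the hypothesis constrains only large $\omega_l$), I would replace $\omega_l$ by $\sup(\omega_l,\omega_{l_0})$ for the relevant threshold $\omega_{l_0}$: this supremum is again $<<\omega$ by the argument of Theorem 1.4 d), it dominates $\omega_l$ so still violates the $o(\omega(t))$ bound because $h$ is nondecreasing, and it exceeds the threshold. This contradiction closes the Roumieu case.

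I expect the main obstacle to be the interpolating construction of $\omega_l$: one must route the prescribed values $(t_j,s_j)$ through a genuine element of $\mathcal{W}$ that is simultaneously $<<\omega$ and large enough, keeping $\omega_l$ increasing with $\omega_l(x)/x$ decreasing so that subadditivity, and hence membership in $\mathcal{W}$, holds, exactly as in the inductive arguments of Section 2. Minor additional care is needed when $g$ is not monotone, where I would replace $h$ by its nondecreasing majorant, and in reading ``more than polynomial growth'' as the failure of $g(y)=O(y^{N})$ for every $N$, which is precisely what yields the sequence $(y_j,\phi_j)$ above.
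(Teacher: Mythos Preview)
Your proposal is correct and follows essentially the same route as the paper: a direct substitution for the Beurling scale, and a contrapositive construction of a weight $\omega_l<<\omega$ for the Roumieu scale. The paper's Roumieu argument is a bit more streamlined than yours: it takes the super-polynomial sequence in the specific form $F(t_n)>t_n^{\,n}$, defines $T_n$ by $e^{\frac{1}{n}\omega(T_n)}=t_n$, and then directly invokes the $\omega''$ of Theorem~2.4(b), which by construction satisfies $\omega''(T_n)=\tfrac{1}{n}\omega(T_n)=\log t_n$; hence $F(e^{\omega''(T_n)})=F(t_n)>t_n^{\,n}=e^{\omega(T_n)}$, contradicting moderateness in one line. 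Your $\sqrt{\phi_j}$ splitting and ad-hoc interpolation achieve the same thing but with an extra layer; conversely, your version is more careful about the ``for $l$ large enough'' clause in the definition of moderateness, which the paper leaves implicit.
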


  \begin{proof} 
a)Beurling case:
\\

Let $F$ be moderate for our asymptotic scale 
there would exis $\,k>0$
such that for large t,
 $$\,F(e^{\omega(t)})\,<\,e^{k\omega(t)} = (e^{\omega(t)})^k\,$$ Hence F is of at most polynomial growth

b)Roumieu case 
If $\,F\,$ had more than polynomial growth there would exist a sequence $\,(t_n)\,$, increasing to infinity 
such that 
$\,F(t_n)\,>\,t_n^n\,$
Consider $\,T_n\,$ such that
 $e^{\omega(T_n)}\,=\,t_n^n\,$ i.e. such that 
 $e^{\frac{1}{n}\omega(T_n)} \,= t_n$
 as in the construction of the sequence, we can impose inductively to the $t_n^n$ conditions such that $T_n$ satisfy the usual conditions for the construction of a weight function $\,\omega"<<\omega\,.$
 We can thus obtain a weight function $\omega"$ such that 
 $F(e^{\omega"(T_n)})\,>\,e^{\omega(T_n)}$ for large $n$.
 
 As the sequence $\,(T_n)\,$
 increases to infinity this contradicts the moderateness of $F$ because the image of the inverse of an element of a scale by a function which is moderate for the scale has to be bounded for large $t$ by an inverse of an element of our scale(in the projective presentation).  
 \end{proof}
 
\section{Strong and strict associations and regularities} 
 All those "Colombeau type algebras" have been introduced in order to embed various duals of spaces of various type regularities , distributions, ultradistributions etc in algebras in order to be able to face non linear type equations 
  This is usualy obtained by convolution of elements with compact support with nets of molifiers of some requested regularity,( see \cite{D.V.V.}).
   Thus those embeddings have the form of 
  $$ U(T)\,=\,[(T\star\phi_\varepsilon)]$$ where the net of molifiers:
 $$ ( \phi_\varepsilon(x)\,$$ is defined by 
 $$ \phi_\varepsilon(x)=\,
 \frac{1}{\varepsilon^n}\phi(\frac{x}{\varepsilon})$$
 Where $\phi$ has some adequate regularity.and
 $\hat{\phi}$ is $1$ on a neighbourhood of zero 
In the thesis of A.Debrowere and in \cite{D.V.V.} was proved that $T$ coresponds to an ultradifferentiable fuction if and only if its image belongs to a class of "regular " generalized ultradiferentiable functions
 Here we will prove that there are weaker sufficient conditions to that namely "strong associaton" for the Beurling case and "strict association" for the Roumieu case.
  For the case of ultradiferentiable functions where regularity was defined with the help of $\,(M_p)$ sequences satisfying some conditions this was done in \cite{P.R.S.Z}. Here we extend those ideas and methods in the case of Bj\"{o}rk type definitions.
 \subsection{simple association and strong and strict associations}
 \begin{definition}
 a weak form of association is to say that 
 two generalized functions , or a generalized function and an (ultra) distribution are "associated " if the différence of representative integrated with any test function pf the adquate type converges to zero (the same definition holds with a generalized function and a distribution or ultradistribution).
 \end{definition}

  Such a general definition does not impose any condition
  on the rate of convergence and does not allow to draw any conclusion of the regularity of a distribution which is simply associated to a "regular" generalized function thus we need somme more demanding definitions of  associations:
  
 \begin{definition}       
A  Beurling $(\omega)$-ultradistribution $\,T\,$ is said to be strongly associated to 
$\,[(g_\varepsilon)]\,\in\,\mathcal{G}^{(\omega)}\,$
If 
$$\,\forall\,K\subset\subset \Omega\,,\exists b>0\,
\mbox{s.t.} \,\forall \,\phi\,\in
 \mathcal{D}_K^{(\omega)}(\Omega)\,,
 <g_\varepsilon -T,\phi>\,=\,o(e^{-b\omega(\frac{1}{\varepsilon)}})\,$$
 In Roumieu case the last condition is replaced by the existence of a weight function $\,\omega_b\,<<\omega\,$
  such that: 
   $$ <g_\varepsilon -T,\phi>\,=\,o(e^{-\omega_b(\frac{1}{\varepsilon})})\,$$
   \end{definition}
  
  In order to compare regularities in Roumieu case we will need a stronger kind of association
  \begin{definition}
In both Beurling and Roumieux cases the strict association
is defined by the condition 
$$ [(<g_\varepsilon-T,\phi>)]\,=\,0$$ for any test function $\phi$.
i.e it is zero as an element of the ring of generalized constants of the corresponding type.
  In Roumieu case and the non projective presentation this is equivalent to the existence, for any test fuction $\phi$ of $l>0$ such that 
$$\,(<g_\varepsilon-T,\phi>)\,
=\,o(e^{-l\omega(\frac{1}{\varepsilon})})\,$$
In the projective presentation this is equivalent to the fact that: 
$$\,\forall\,\omega'<<\omega\,,(<g_\varepsilon-T,\phi>)\,
=\,o(e^{-\omega'(\frac{1}{\varepsilon})})\,$$

\end{definition}

Let us now remind the notions of regularities defined in the thesis of A.Debrouwere and in \cite{D.V.V.}
 for Beurling and Roumieu case:

\begin{definition}
BEURLING CASE REGULARITY.

An element $\,g\,=\,[(g_\varepsilon))]\,$
 is said to be $\{\omega\}$-regular 
 $(\,g\,\in\,\mathcal{G}^{(\omega),\infty}(\Omega)\,)$
 if a representing net $\,(g_\varepsilon,)$ belongs 
  to 
 $$\mathcal{E}^{(\omega),\infty}(\Omega) =
  \{(g_{\varepsilon}) \in \mathcal{E}^{(\omega)}_M (\Omega)  \, \mbox{s.t.} \forall K \subset \subset \Omega \,,
  \exists k>0 , \mbox{s.t.}
\forall h>0,||g_{\varepsilon}||_ {K,h} = o(e^{k\omega(\frac{1}{\varepsilon})} ) \} $$ 
\end{definition}.

In the definition of regularity for Roumieu case
for the non projective presentation of regularity we have also a shift of quantificators
 
\begin{definition}

ROUMIEU CASE REGULARITY (see \cite{D.V.V.}).
An element $ g=[(g_\varepsilon)]$ is said to be
 $\{\omega\}$-regular
 $(g\in\mathcal{G}^{\{\omega\},\infty}(\Omega)\,)$ if its representing net belongs to

$$\mathcal{E}^{\{\omega\},\infty}(\Omega) =
  \{(g_{\varepsilon}) \in \mathcal{E}^{\{\omega\}}_M (\Omega)  \, \mbox{s.t.} \forall K \subset \subset \Omega
  \exists h>0 , \mbox{s.t.}\,
\forall k>0\, ,||g_{\varepsilon}||_ {K,h} = o(e^{k\omega(\frac{1}{\varepsilon})} ) \} $$ 
\end{definition}

In the projective presentation we have another definition and we will have to prove that the two definitions are equivalent  equivalent.

    \begin{definition} 
An element $ g=[(g_\varepsilon)]$ is said to be
 $\{\omega\}$-regular
 $g\in\mathcal{G}^{\{\omega\},\infty}(\Omega)$ (if its representing net belongs to
 
 $$\tilde{\mathcal{E}}^{\{\omega\},\infty}(\Omega) =
  \{(g_{\varepsilon}) \in \mathcal{E}^{\{\omega\}}_M (\Omega)  \, \mbox{s.t.} \forall K \subset \subset \Omega\,,
  \exists \omega_1 << \omega\, , \mbox{s.t.} \,,
\forall \omega_2 <<\omega\,
||g_{\varepsilon}||_ {K,1}^{\omega_2} = o(e^{\omega_1(\frac{1}{\varepsilon})} ) \} $$ 
\end{definition}

   \begin{proposition}
The two definitions of Roumieu regularity are equivalent and 
$ \,\mathcal{E}^{\{\omega\},\infty}(\Omega),\ 
=\,\tilde{\mathcal{E}}^{\{\omega\},\infty}(\Omega)$
 \end{proposition}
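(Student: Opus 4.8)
The plan is to establish the two inclusions separately, noting that both definitions impose the \emph{same} moderateness constraint $(g_\varepsilon)\in\mathcal{E}^{\{\omega\}}_M(\Omega)$, so that only the two regularity clauses have to be matched. The whole difficulty is that the classical clause controls the ``large'' seminorms $\|g_\varepsilon\|_{K,h}^\omega=\int|\hat g_\varepsilon(\xi)|e^{h\omega(\xi)}d\xi$, whereas the projective clause controls the ``small-weight'' seminorms $\|g_\varepsilon\|_{K,1}^{\omega_2}=\int|\hat g_\varepsilon(\xi)|e^{\omega_2(\xi)}d\xi$ with $\omega_2<<\omega$. The two bridges between these families will be, as everywhere in this paper, the Fundamental Lemma and the elementary domination $e^{\omega_2(\xi)}\le C\,e^{h\omega(\xi)}$, valid for any fixed $h>0$ and any $\omega_2<<\omega$ since $\omega_2(\xi)\le h\omega(\xi)$ for $\xi$ large.

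For the inclusion $\mathcal{E}^{\{\omega\},\infty}(\Omega)\subseteq\tilde{\mathcal{E}}^{\{\omega\},\infty}(\Omega)$ I would fix $K$ and take the single $h>0$ furnished by the classical definition, so that the net $\|g_\varepsilon\|_{K,h}^\omega$ is $o(e^{k\omega(1/\varepsilon)})$ for every $k>0$. Applying the Fundamental Lemma, part a), to this \emph{one} net --- exactly as it is applied to norm-nets in the proof of the equivalence $\mathcal{E}^{\{\omega\}}_M=\tilde{\mathcal{E}}^{\{\omega\}}_M$ above --- produces a single $\omega_1<<\omega$ with $\|g_\varepsilon\|_{K,h}^\omega=o(e^{\omega_1(1/\varepsilon)})$. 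Then for every $\omega_2<<\omega$ the domination above gives $\|g_\varepsilon\|_{K,1}^{\omega_2}\le C\,\|g_\varepsilon\|_{K,h}^\omega=o(e^{\omega_1(1/\varepsilon)})$, and since the same $\omega_1$ serves for all $\omega_2$, this is precisely the projective regularity condition. The point worth stressing is that the existential quantifier on $\omega_1$ must be pulled out \emph{before} the universal one on $\omega_2$, and this is exactly what the Fundamental Lemma allows.

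The reverse inclusion is the substantial one, and I would argue it by contradiction, reusing the inductive mechanism of the hard direction of that earlier equivalence. Assume the projective condition holds, with its fixed $\omega_1<<\omega$, but that the classical condition fails; the negation of the classical clause means that for every $h>0$, in particular for $h=\frac1n$, the net $\|g_\varepsilon\|_{K,\frac1n}^\omega$ fails to be $o(e^{k\omega(1/\varepsilon)})$ for some $k=k_n>0$. Since $e^{\omega_1(1/\varepsilon)}\ll e^{k_n\omega(1/\varepsilon)}$, one can then select $\varepsilon_n\downarrow 0$ with $\|g_{\varepsilon_n}\|_{K,\frac1n}^\omega>n\,e^{\omega_1(1/\varepsilon_n)}$, and, each such seminorm being finite, truncation frequencies $T_n\uparrow\infty$ with $\int_{|\xi|<T_n}|\hat g_{\varepsilon_n}(\xi)|e^{\frac1n\omega(\xi)}d\xi>\frac n2\,e^{\omega_1(1/\varepsilon_n)}$. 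Imposing on $(T_n)$ all the requirements of the construction of $\omega''<<\omega$ from the proof of Theorem 1.4 b), one builds a weight $\omega_2<<\omega$ with $\omega_2(\xi)\ge\frac1m\omega(\xi)$ on each interval $(T_{m-1},T_m)$, hence $\omega_2(\xi)\ge\frac1n\omega(\xi)$ whenever $|\xi|<T_n$. Then $\|g_{\varepsilon_n}\|_{K,1}^{\omega_2}\ge\int_{|\xi|<T_n}|\hat g_{\varepsilon_n}(\xi)|e^{\frac1n\omega(\xi)}d\xi>\frac n2\,e^{\omega_1(1/\varepsilon_n)}$, so $\|g_\varepsilon\|_{K,1}^{\omega_2}$ is not $o(e^{\omega_1(1/\varepsilon)})$, contradicting the projective condition for that very $\omega_1$. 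The main obstacle is precisely this construction of $\omega_2$: one must simultaneously keep $\omega_2<<\omega$ and force $\omega_2\ge\frac1n\omega$ below each $T_n$, which is consistent only because $\frac1n\to 0$, and one must check that $\omega_2$ genuinely lands in $\mathcal{W}$ (increasing, with $\omega_2(t)/t$ decreasing, convergent integral, and super-logarithmic growth). All of this is secured by reusing verbatim the bookkeeping already carried out for $\omega''$ in Theorem 1.4 b), so no new estimate is needed beyond adapting that construction to the sequence $(\varepsilon_n,T_n)$.
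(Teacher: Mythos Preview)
Your proposal is correct and follows essentially the same approach as the paper: the easy inclusion via the Fundamental Lemma part a) applied to the single net $\|g_\varepsilon\|_{K,h}^\omega$ and domination $e^{\omega_2}\le C e^{h\omega}$, and the hard inclusion by contradiction via the inductive construction of $(\varepsilon_n,T_n)$ and of a weight $\omega_2<<\omega$ as in Theorem~1.4~b). The only cosmetic difference is that in the contradiction you work directly with the specific $\omega_1$ supplied by the projective hypothesis, whereas the paper shows failure for every candidate $\omega''<<\omega$; your version is the more economical formulation of the same argument.
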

  
\begin{proof}
As the properties we define here are local, we can without loss of generality suppose that there exists a compact subset $K$ of 
$\,\Omega\,$ which contains all the supports of $T$ and of all $\,g_\varepsilon$ considered here.
 let us first suppose that
  $(g_\varepsilon)\in \tilde{\mathcal{E}}^{\{\omega\},\infty}(\Omega)$
  If it did not belong also to 
  $\,\mathcal{E}^{\{\omega\},\infty}(\Omega)\,$ for  any 
  $\,n\in\,\NN$ there would exists $\,l_n>0\,$ such that 
$\,\int|\hat{g}_\varepsilon(\xi)|e^{\frac{1}{n}\omega(|\xi|)}d\xi\,$ is not
 $\,o(e^{l_n\omega(\frac{1}{\varepsilon})})\,$. 
 As for any given $\,\omega"<<\omega\,$
 and for $\,\varepsilon\,$ small enough 
 $\,e^{\omega"(\frac{1}{\varepsilon})}\,<\,e^{l_n\omega(\frac{1}{\varepsilon})} \,$ we can find 
 $\,\varepsilon_n$ as small as we want such that:
 
$$ \int|\hat{g_{\varepsilon_n}}(\xi)|e^{\frac{1}{n}\omega(|\xi|)}d\xi \,>\,
2ne^{l_n\omega(\frac{1}{\varepsilon_n})}\,>\,2ne^{\omega"(\frac{1}{\varepsilon_n})}\,. $$

Thus we can construct inductively a decreasing to zero sequence $\,(\varepsilon_n)\,$, and an increasing to infinity sequence $\,(T_n)\,$ such that:
   
  $\,\int_{|\xi|<T_n}\hat{g_{\varepsilon_n}}(\xi)|e^{\frac{1}{n}\omega(|\xi|)}d\xi\, 
  > ne^{\omega"(\frac{1}{\varepsilon_n})}.$
  
  In the inductive construction of the sequence $(T_n)$ and the sequence $\,(\varepsilon_n)\,$ 
  we can impose the condition of the construction of a weight function $\,\omega_1<<\omega\,$ and obtain 
   Thus
   
$$\int|\hat{g_{\varepsilon_n}}(\xi)|e^{\omega_1(|\xi|)}d\xi\,>\, ne^{\omega"(\frac{1}{\varepsilon_n})} $$

which implies that for any $\omega"<<\omega$ there exists 
$\,\omega_1\,$ such that:
 
$||g_{\varepsilon}||_ {K,1}^{\omega_1}\,$ is not $\,o(e^{\omega"(\frac{1}{\varepsilon})})\,$
which contradicts the hypothesis.
Conversely let us now suppose that 
$,(g_\varepsilon) \,
\in \mathcal{E}^{\{\omega\},\infty}(\Omega).$ by, puting 
$$\,t,\,=\; \frac{1}{\varepsilon}$$
This implies that there exists $\,l>0\,$ such that for all $k>0$

$ h(t) = ||g_{\frac{1}{t}}||_{K,l}^\omega $ is
 $\,o(e^{k\omega(t)})\,$ for all $\,k\,>\,0\,$
 
 By the fondamental lemma this implies
  that there exists $\,\omega"<<\omega\,$ such that
  $\,h(t) \,=\,o(e^{\omega"(t)} )  .$ 
  As for any 
  $\omega'<<\omega$ for $\,\varepsilon\,$ small enough ( t large enough)
  $||g_\varepsilon||_{K,1}^{\omega'} \,
  \leq\,||g_\varepsilon||_{K,l}^{\omega}$
  We can  now conclude that 
  there exist $\omega"<<\omega$ such that for any
   $\omega'<<\omega\,$
   we have :
   $$\,|g_{\varepsilon}||_ {K,1}^{\omega'}\,=
   o(e^{\omega"(\frac{1}{\varepsilon})})$$
   i.e. $\,g\,\in\,\tilde{\mathcal{E}}^{\{\omega\},\infty}(\Omega)\,$.
   \end{proof}
   
  \section{Comparison of regularities}

  In this section we will show how we can compare the regularity of an ulradistribution to the regularity of a generalized ultradifferentiable function under specific conditions . 
   We will pass into the "weight function" model some results already treated for the "$(M_p)$ sequence model" in \cite{P.R.S.Z}.
 Analogous results in different situations for also other kinds of regularities have been treated in  \cite{P.S.V.1}, in  \cite{P.S.V.2} and \cite{P.S.V.3}

  \subsection{Beurling case} 
   \begin{theorem}
 Let $\,T \, \\in \mathcal{D}'^{(\omega)}\, $  be a Beurling ultradistribution, strongly associated to a Beurling regular generalized ultradifferentiable function 
 $\,g=[(g_\varepsilon)]\,\in\,
 \mathcal{G}^{(\omega),\infty}(\Omega)\,$
 Then T is a regular Beurling  ultradifferentiable function
 $\,T \in \mathcal{E}^{(\omega)}(\Omega).\,$  
   
   \end{theorem}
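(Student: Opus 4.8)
The plan is to localize and reduce everything to a pointwise decay estimate for a Fourier transform. Since the assertion is local and we are in the non-quasianalytic case, I would fix $K\subset\subset\Omega$ and a cutoff $\chi\in\mathcal{D}^{(\omega)}$ equal to $1$ on a neighbourhood of $K$ and supported in a compact $K'\subset\subset\Omega$; it then suffices to show that $\chi T$ is an ultradifferentiable function, i.e. that $\|\chi T\|_{K',l}^{\omega}<\infty$ for every $l>0$, which by the norm equivalences recalled above follows from the pointwise bound $|\widehat{\chi T}(\xi)|\leq C_l\,e^{-l\omega(|\xi|)}$ for all $l>0$. To reach $\widehat{\chi T}(\xi)=\langle T,\psi_\xi\rangle$ I would test against $\psi_\xi(x)=\chi(x)e^{-i\langle x,\xi\rangle}\in\mathcal{D}_{K'}^{(\omega)}$; since $\widehat{\psi_\xi}(\eta)=\widehat{\chi}(\eta+\xi)$, monotonicity together with subadditivity of $\omega$ gives at once $\|\psi_\xi\|_{K',m}^{\omega}\leq e^{m\omega(|\xi|)}\|\chi\|_{K',m}^{\omega}$, so the seminorms of these test vectors grow no faster than $e^{m\omega(|\xi|)}$.

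The decisive point, and the main obstacle, is to convert the strong association into a bound on $\langle g_\varepsilon-T,\psi_\xi\rangle$ that is uniform in $\xi$, so that afterwards $\varepsilon$ may be chosen as a function of $\xi$. Strong association only asserts, for each fixed $\phi$, that $\langle g_\varepsilon-T,\phi\rangle=o(e^{-b\omega(1/\varepsilon)})$, with no uniformity. I would upgrade it as follows: pick any $b'<b$; then the family $\{e^{b'\omega(1/\varepsilon)}(g_\varepsilon-T)\}_{\varepsilon}$ of continuous functionals on the Fréchet space $\mathcal{D}_{K'}^{(\omega)}$ converges pointwise to $0$, hence is pointwise bounded, so by Banach--Steinhaus (the space is barrelled) it is equicontinuous. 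Therefore there are a single order $m$ and a constant $C$, both independent of $\varepsilon$, with $|\langle g_\varepsilon-T,\phi\rangle|\leq C\,e^{-b'\omega(1/\varepsilon)}\|\phi\|_{K',m}^{\omega}$ for all small $\varepsilon$ and all $\phi$. Applied to $\psi_\xi$ this yields $|\widehat{\chi T}(\xi)-\widehat{\chi g_\varepsilon}(\xi)|\leq C'\,e^{m\omega(|\xi|)-b'\omega(1/\varepsilon)}$.

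The regular side is controlled by the hypothesis $g\in\mathcal{G}^{(\omega),\infty}(\Omega)$: there is a fixed $k>0$ such that for every $h>0$ one has $\|g_\varepsilon\|_{K',h}^{\omega}=o(e^{k\omega(1/\varepsilon)})$, whence, passing to the equivalent sup-norm, $|\widehat{\chi g_\varepsilon}(\xi)|\leq C_h\,e^{k\omega(1/\varepsilon)-h\omega(|\xi|)}$. Now comes the frequency/parameter trade-off. Given $l>0$, set $A=(l+m)/b'$ and, for $|\xi|$ large, choose $\varepsilon=\varepsilon(\xi)$ by $\omega(1/\varepsilon)=A\,\omega(|\xi|)$, which is possible since $\omega$ is continuous and increases to $\infty$. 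The difference term then becomes $C'e^{(m-b'A)\omega(|\xi|)}=C'e^{-l\omega(|\xi|)}$, while the regular term becomes $C_h\,e^{(kA-h)\omega(|\xi|)}$; choosing the still-free parameter $h=l+kA$, which is legitimate because regularity holds for all $h$ with $k$ fixed, makes it $\leq C_h\,e^{-l\omega(|\xi|)}$ as well. Summing, $|\widehat{\chi T}(\xi)|\leq C_l\,e^{-l\omega(|\xi|)}$ for all $l>0$ and all large $|\xi|$.

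It remains only to integrate. Using the pointwise bound with exponent $l+1$ in place of $l$, $\|\chi T\|_{K',l}^{\omega}=\int|\widehat{\chi T}(\xi)|e^{l\omega(|\xi|)}\,d\xi\leq C_{l+1}\int e^{-\omega(|\xi|)}\,d\xi$, and this last integral converges because condition (c), namely $\omega(t)/\log(1+t)\to\infty$, forces $e^{-\omega(|\xi|)}$ to decay faster than any power of $|\xi|$. Hence $\|\chi T\|_{K',l}^{\omega}<\infty$ for every $l>0$, so $\chi T\in\mathcal{D}^{(\omega)}$ for every such cutoff, that is $T\in\mathcal{E}^{(\omega)}(\Omega)$. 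I expect the Banach--Steinhaus uniformization of the strong association to be the only genuinely delicate step; once uniformity in $\xi$ is in hand, the remainder is the standard trade-off in $\varepsilon$ together with the subadditivity and the growth property (c) of $\omega$.
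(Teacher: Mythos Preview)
Your proof is correct and follows the same route as the paper's: localize to compact support, pass to the Fourier side, split $|\widehat{\chi T}(\xi)|$ via the triangle inequality into a strong-association term and a regularity term, and then tune $\varepsilon=\varepsilon(\xi)$ through a relation $\omega(1/\varepsilon)=A\,\omega(|\xi|)$ so that both exponents become $\leq -l\,\omega(|\xi|)$. The only difference is one of presentation: you derive the uniform bound $|\widehat{\chi T}(\xi)-\widehat{\chi g_\varepsilon}(\xi)|\leq C'\,e^{m\omega(|\xi|)-b'\omega(1/\varepsilon)}$ explicitly via Banach--Steinhaus on the Fr\'echet space $\mathcal{D}_{K'}^{(\omega)}$, whereas the paper simply asserts this Fourier-side form of strong association by testing against $\psi(x)e^{ix\xi}$ and referring to \cite{D.V.V.}. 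One small caution on your uniformization step: pointwise convergence to $0$ of the net $\{e^{b'\omega(1/\varepsilon)}(g_\varepsilon-T)\}_{\varepsilon\in(0,1)}$ does not by itself yield pointwise boundedness when the index set is uncountable, so either restrict to the sequence $\varepsilon=1/n$ (subadditivity of $\omega$ then lets you absorb the discreteness in the trade-off), or run the Baire argument directly as in the paper's uniformity lemma for the Roumieu case.
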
    
  
  \begin{proof}
 The regularity properties being local, in our case, we can suppose without loss of generality that there is a compact subset $\,K\subset\subset\,\Omega\,$ whose interior is  containing all the supports of $T$ and $(g_\varepsilon)$.Thus we can consider the Fourier trasforms and the translation of regularities and strong association properties  in terms of the Fourier transforms 
 of the elements we consider.
  The regularity of the net $\,(g_\varepsilon)\,$
  is expressed in :
  $$\,\exists k>0\, \mbox{s.t.}\, \forall\,l>o, \exists C_l>0\,\mbox{s.t.}
sup(|\hat{g}_\varepsilon(\xi)|e^{l\omega(|\xi|})\,<\,
C_le^{k\omega(\frac{1}{\varepsilon)})}\,.$$
 This amounts to
 $$\forall\, \xi\,\in\,\RR^n \|\hat{g}_\varepsilon(\xi)|\,\leq\,
 C_le^{k\omega(\frac{1}{\varepsilon)})-l\omega(|\xi|}
 $$ 

 The regularity of the ultradistribution $\,T\,$ is expressed by 
 $$\,\forall \lambda>0\,
 ,|\hat{T}(\xi)|e^{\lambda\omega,|\xi|)}\,
 <\,\infty\\,,$$ 
 Let $\psi$ be a test function equal to one on the compact $K.$  Considering the norms of 
 $ \psi(x)e^{ix\xi}$' we can easily prove in an analogous way as in \cite{D.V.V.} that                                               
 in terms of Fourier transforms, the Beurling strong association is expressed  by:
 
 $$\exists b>0\,\mbox{s.t.} \exists\, h>0\,\exists\,C_h>0
 \,\mbox{s.t}\,
 |\hat{T}(\xi)-\hat{g}_\varepsilon(\xi)|\,\leq\,
 C_he^{h\omega(|\xi|)-b\omega(\frac{1}{\varepsilon})}.$$
 
 Thus
 \begin{equation*}
  e^{\lambda\omega(|\xi|)}|\hat{T}(\xi)|\,\leq\,
 e^{\lambda\omega,|\xi|)}|(\hat{T}(\xi)-
 \hat{g_\varepsilon} (\xi))|\,+
 \,|\hat{g_\varepsilon}(\xi)|e^{l\omega(|\xi|)})\,\leq
 \,C_he^{\lambda\omega(|\xi|)+h\omega(|\xi|)
 -b\omega(\frac{1}{\varepsilon})}\,+\,
 C_le^{(\lambda-l)\omega(|\xi|)+
 k\omega(\frac{1}{\varepsilon})}\,.
 \end{equation*}
 
 This holds for any couple $\,(\xi,\varepsilon)\,.$
 Thus for any given $\xi$, choose $\,\varepsilon\,$ to be such that
  $\lambda\omega(|\xi|)+h\omega(|\xi|)-
  b\omega(\frac{1}{\varepsilon})\,=\,0\,$ (this is posssible for $\xi$ large enough.)
  Thus the first exponant is zero .Putting now
  $\omega(\frac{1}{\varepsilon})\,
  =\,\frac{\lambda+h}{b}\omega(\xi)\,$ in the second exponent we obtain $(\lambda-l+k\frac{\lambda+h}{b})\omega(|\xi|)$
  
  Thus if we choose
   $\,l\,=\,\lambda+k\frac{\lambda+h}{b} \,$
    the second exponent is also zero.
     We have thus verified  on the Fourier tranform of $T$ the condition necessary and sufficient to conclude that it is a regular ulradifferentiable function.
     \end{proof}
     \subsection{Roumieu case}

 In Roumieu case an analogous theorem does not hold because:
 \begin{proposition} 
There exists a Roumieu regular generalized function  
$$g=[(g_\varepsilon)] \,\in\,
\mathcal{G}^{\{\omega\},\infty}$$ strongly associated to an irregular Roumieu ultradistribution. 
 \end{proposition}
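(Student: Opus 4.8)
The plan is to exhibit an explicit counterexample rather than to prove any implication. Since regularity and (strong) association are local properties, I would first invoke the partition of unity available in the non quasi-analytic case to reduce to the situation where $T$ and every $g_\varepsilon$ are supported in a fixed $K\subset\subset\Omega$, and then transfer all the estimates to the Fourier side exactly as in the Beurling comparison theorem above. Fix once and for all a weight function $\omega'\in\mathcal{W}$ with $\omega'<<\omega$, and let $T$ be a compactly supported localisation of the object with $|\hat T(\xi)|=e^{-\omega'(|\xi|)}$. Because $\omega'<<\omega$, for every $l>0$ the exponent $l\omega(t)-\omega'(t)\to+\infty$, so $\int|\hat T(\xi)|e^{l\omega(|\xi|)}\,d\xi=\infty$ for all $l>0$; hence $T\notin\mathcal{E}^{\{\omega\}}(\Omega)$, i.e. $T$ is an \emph{irregular} Roumieu ultradistribution, while the fast decay of $\hat T$ keeps $T$ a genuine $\{\omega\}$-ultradistribution.

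The heart of the matter is to build a regularising net by a frequency cut-off placed at a carefully tuned scale. I would fix $\omega_1<<\omega$, set $R(\varepsilon)=\omega^{-1}(\omega_1(1/\varepsilon))$, and define $g_\varepsilon$ by $\hat g_\varepsilon(\xi)=\hat T(\xi)$ for $|\xi|\le R(\varepsilon)$ and $\hat g_\varepsilon(\xi)=0$ otherwise (equivalently, to preserve compact supports, $g_\varepsilon=T\star\rho_{1/R(\varepsilon)}$ for a Roumieu mollifier $\rho$ with $\hat\rho=1$ near $0$). Each $g_\varepsilon$ is then band-limited, hence lies in $\mathcal{E}^{\{\omega\}}$. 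To see that $(g_\varepsilon)$ is Roumieu-regular I would verify the projective condition with this \emph{single} $\omega_1$: for any $\omega_2<<\omega$ one has $\int|\hat g_\varepsilon(\xi)|e^{\omega_2(|\xi|)}\,d\xi\le\int_{|\xi|\le R(\varepsilon)}e^{\omega_2(|\xi|)-\omega'(|\xi|)}\,d\xi$, which is dominated by $R(\varepsilon)\,e^{\omega_2(R(\varepsilon))}$. Since $\omega(R(\varepsilon))=\omega_1(1/\varepsilon)$, condition (c) yields $\log R(\varepsilon)=o(\omega_1(1/\varepsilon))$, and $\omega_2<<\omega$ gives $\omega_2(R(\varepsilon))=o(\omega(R(\varepsilon)))=o(\omega_1(1/\varepsilon))$; so the whole quantity is $o(e^{\omega_1(1/\varepsilon)})$, proving $(g_\varepsilon)\in\tilde{\mathcal{E}}^{\{\omega\},\infty}(\Omega)$.

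It remains to check strong association, and this is where the tuning of $R(\varepsilon)$ is decisive. For any test function $\phi\in\mathcal{D}_K^{\{\omega\}}$ one has $|\langle g_\varepsilon-T,\phi\rangle|\le\int_{|\xi|>R(\varepsilon)}e^{-\omega'(|\xi|)}|\hat\phi(\xi)|\,d\xi\le e^{-\omega'(R(\varepsilon))}\,\|\hat\phi\|_{L^1}$, so I only need one $\omega_b\in\mathcal{W}$ with $\omega_b<<\omega$ and $\omega_b(1/\varepsilon)\le\tfrac12\,\omega'(R(\varepsilon))$ eventually, which gives $e^{-\omega'(R(\varepsilon))}=o(e^{-\omega_b(1/\varepsilon)})$ uniformly in $\phi$ (the constant $\|\hat\phi\|_{L^1}$ being harmless as $\varepsilon\to0$). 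The quantity $\omega'(R(\varepsilon))=\omega'(\omega^{-1}(\omega_1(1/\varepsilon)))$ tends to infinity yet is bounded above by $\omega_1(1/\varepsilon)<<\omega(1/\varepsilon)$, so by Theorem 1.4 such an $\omega_b<<\omega$ genuinely exists in $\mathcal{W}$. With $\omega_b$ in hand, strong association holds and the proposition follows.

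The delicate point, and precisely the reason the Beurling analogue fails here, is that regularity forces an \emph{upper} bound $R(\varepsilon)\lesssim\omega^{-1}(\omega_1(1/\varepsilon))$ on the cut-off scale while strong association forces a \emph{lower} bound; I expect verifying that this window is non-empty to be the main obstacle. It is non-empty exactly because both the regularity scale $\omega_1$ and the decay exponent $\omega'$ of $T$ are \emph{strongly} below $\omega$, so condition (c) leaves enough room between $\omega'(R(\varepsilon))$ and $\omega_1(1/\varepsilon)$. In the Beurling case the regularity decay is $e^{-l\omega}$ for \emph{all} $l>0$, which would force $\hat T$ to decay faster than every $e^{-l\omega}$ and thus collapses this window, recovering the regularity of $T$; it is the purely $<<\omega$ character of Roumieu regularity in the projective description that makes the counterexample possible.
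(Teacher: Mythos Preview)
Your construction and the paper's share the same core idea: slow the mollification scale down so that the frequency cut-off sits at $R(\varepsilon)=\omega^{-1}(\omega_1(1/\varepsilon))$ for some fixed $\omega_1<<\omega$. The paper carries this out much more economically by taking $T=\delta$ and setting $g_\varepsilon=\phi_{\eta(\varepsilon)}$, where $(\phi_\varepsilon)$ is the embedding mollifier and $\eta$ is defined by $\omega_1(1/\varepsilon)=\omega(1/\eta(\varepsilon))$. Regularity then follows in one line from the moderateness of $(\phi_\varepsilon)$ (any bound $o(e^{\omega''(1/\varepsilon)})$ with $\omega''<<\omega$ is a fortiori $o(e^{\omega(1/\varepsilon)})$, and after reparametrisation this becomes $o(e^{\omega_1(1/\varepsilon)})$), and strong association follows immediately from the \emph{strict} association of $(\phi_\varepsilon)$ to~$\delta$. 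Your mollifier variant $g_\varepsilon=T\star\rho_{1/R(\varepsilon)}$ is literally this construction when $T=\delta$.

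Your longer route introduces two gaps. First, ``a compactly supported localisation of the object with $|\hat T(\xi)|=e^{-\omega'(|\xi|)}$'' is not a definition: once you multiply by a cut-off the Fourier transform is convolved and no longer equals $e^{-\omega'}$, so every subsequent estimate needs to be redone. This is fixable but not as written.

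Second, and more seriously, your appeal to ``Theorem~1.4'' for the existence of $\omega_b\in\mathcal{W}$ with $\omega_b(t)\le\tfrac12\,\omega'(\omega^{-1}(\omega_1(t)))$ is unjustified: that theorem says nothing of the sort. Every element of $\mathcal{W}$ must satisfy condition~(c), i.e.\ grow faster than $\log t$, so you need $\omega'(\omega^{-1}(\omega_1(t)))/\log t\to\infty$. This can \emph{fail} for legitimate choices: if $\omega_1,\omega'\in\mathcal{W}$ grow only barely faster than $\log$, the triple composition can behave like a power of $\log\log t$, below every weight function. Since you are free to choose $\omega'$ and $\omega_1$, you can rescue the argument by picking them large enough (e.g.\ powers $t^a$ when these lie below $\omega$), but you must say so and verify it; the blanket claim is false. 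The paper sidesteps the whole issue because with $T=\delta$ the strict association of the mollifier already gives a decay $o(e^{-l\omega(1/\eta(\varepsilon))})=o(e^{-l\omega_1(1/\varepsilon)})$ for some $l>0$, and $l\omega_1$ is trivially in $\mathcal{W}$ and $<<\omega$.
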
   
    
 \begin{proof}   
 In order to show this,  We will construct a regular net
    $ g=[(g_\varepsilon)]$ strongly associated to the dirac distribution $\delta$.
    Let $\phi_\varepsilon$ be a net of molifiers with the help of which is obtained the embedding of 
    $$ \mathcal{D}^{' \{\omega\}} (\RR^n)$$ into
    $$ \mathcal{G}^{\{\omega\}} (\RR^n)$$ (see\cite{D.V.V.} )
    Let us choose arbitrarily $\omega_1 <<\omega$
    and consider the "slowing down " mapping $\eta$ defined by :
    $$\omega_1(\frac{1}{\varepsilon}) =
    \omega(\frac{1}{\eta(\varepsilon)}) \,,$$ and let us now consider the net 
    $$ (g_\varepsilon) = (\phi_{\eta(\varepsilon)})$$
    As the net $\phi_\varepsilon$ is strictly associated to $\delta$ one can immediately see that the net 
    $$ (g_\varepsilon) = \phi_{\eta(\varepsilon)}$$
    is strongly associated to $\delta$.
    Notice now that as the net $\phi_\varepsilon$
    is moderate for any compact $K$ containing zero in its interior:
    $$ \forall \omega' <<\omega,\,
    \exists \,\omega''<<\omega \,\mbox{s.t.}\,
    ||\phi_\varepsilon||_{K,\omega'}\,=\,
    o(e^{\omega''(\frac{1}{\varepsilon} )})$$
    But this implies that it is also 
    $$o(e^{\omega(\frac{1}{\varepsilon} )})$$
    which implies that 
    $$\forall \omega' <<\omega,\,
    ||g_\varepsilon||_{K,\omega'}\,=\,
    ||\phi_{\eta(\varepsilon)}||_{K,\omega'}\,=\,
    o(e^{\omega(\frac{1}{\eta(\varepsilon)})}) \,
    =\,o(e^{\omega_1(\frac{1}{\varepsilon})} )$$
    which establishes the regularity of $g$
    \end{proof}
   Thus we need the strict association :

     Let us now pass to the Roumieu case with strict association:
  
 \begin{theorem}
 Let $T$ be a Roumieu ultradistribution
  $(T\in\mathcal{D'}^{\{\omega\}}(\Omega))\,$.If it is 
 strictly associated to a regular Roumieu genereralized ultradifferentiable function $g=[(g_\varepsilon)]$ ,i.e.
 $(g\in\mathcal{G}^{\{\omega\},\infty}(\Omega))$
  then it is a Roumieu regular ultradifferentiale function.
  $( T\in \mathcal{E}^{\{\omega\}}(\Omega))\,$
  
\end{theorem}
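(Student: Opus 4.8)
The plan is to carry everything onto the Fourier side and to run the same optimisation as in the Beurling case just proved, the only new feature being that the scalar exponents $k,l,h,b,\lambda$ are replaced by weight functions strictly dominated by $\omega$. Since regularity is a local property, I would first multiply $T$ by a cut-off $\psi\in\mathcal{D}^{\{\omega\}}$ equal to $1$ on a neighbourhood of a fixed compact $K$ carrying all the supports in play, so that $T$ and every $g_\varepsilon$ may be assumed supported in $K$; by the projective characterisation of $\mathcal{D}^{\{\omega\}}$ proved above it then suffices to show that for every $\omega'<<\omega$ the function $\xi\mapsto|\hat{T}(\xi)|e^{\omega'(|\xi|)}$ is bounded (the passage from these $\sup$-estimates to the integral conditions of that characterisation is harmless thanks to the super-logarithmic growth (c) of the weights).

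Next I would record the two Fourier inequalities to be combined, working with the equivalent $\sup$-seminorms. The projective Roumieu regularity of $g$ furnishes a fixed $\omega_1<<\omega$ such that, for every $\omega_2<<\omega$, there is $C_{\omega_2}$ with $|\hat{g}_\varepsilon(\xi)|\leq C_{\omega_2}\,e^{\omega_1(1/\varepsilon)-\omega_2(|\xi|)}$. For the strict association I would test $g_\varepsilon-T$ against the genuine test functions $\psi\,e^{-i\langle\cdot,\xi\rangle}$ and use, on one hand, that $\|\psi\,e^{-i\langle\cdot,\xi\rangle}\|_{K,1}^{\omega_c}\leq C_{\omega_c}\,e^{\omega_c(|\xi|)}$ by subadditivity of $\omega_c$, and, on the other hand, that a Roumieu ultradistribution has in the projective description a fixed continuity order $\omega_c<<\omega$, i.e. $|\langle T,\phi\rangle|\leq C\|\phi\|_{K,1}^{\omega_c}$. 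This should give, exactly as asserted in the Beurling case, a fixed $\omega_c<<\omega$ such that for every $\omega_b<<\omega$ one has $|\hat{T}(\xi)-\hat{g}_\varepsilon(\xi)|\leq C\,e^{\omega_c(|\xi|)-\omega_b(1/\varepsilon)}$. I expect this uniform-in-$\xi$ passage to be the main obstacle: strict association only gives, for each fixed $\phi$, that $\langle g_\varepsilon-T,\phi\rangle=o(e^{-\omega_b(1/\varepsilon)})$, and to make the implicit constant independent of $\xi$ one must upgrade this pointwise statement to uniform decay on the family $\{e^{-\omega_c(|\xi|)}\psi\,e^{-i\langle\cdot,\xi\rangle}\}_\xi$; as in \cite{D.V.V.} I would do this by viewing that family inside a space attached to $\|\cdot\|_{K,1}^{\omega_{c'}}$ with $\omega_{c'}<<\omega_c$, where it is precompact, so that pointwise vanishing becomes uniform.

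With the two estimates available, the endgame is a weight-function version of the Beurling optimisation. Fixing $\omega'<<\omega$ and writing $\sigma=\omega'+\omega_c$ (still a weight function $<<\omega$), I would bound
\[
e^{\omega'(|\xi|)}|\hat{T}(\xi)|\leq C\,e^{\sigma(|\xi|)-\omega_b(1/\varepsilon)}+C_{\omega_2}\,e^{(\omega'-\omega_2)(|\xi|)+\omega_1(1/\varepsilon)} .
\]
Because $\omega_1<<\omega$, the interpolation property of the strong inequality yields $\omega_b$ with $\omega_1<<\omega_b<<\omega$, chosen increasing; for each $\xi$ I would then select $\varepsilon=\varepsilon(\xi)$ by $\omega_b(1/\varepsilon)=\sigma(|\xi|)$, which is solvable and forces $\varepsilon\to0$ as $|\xi|\to\infty$ and makes the first term equal to the constant $C$.

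It remains to bound the second term. From $\omega_1(w)=o(\omega_b(w))$ I get $\omega_1(w)\leq\tfrac12\omega_b(w)$ for large $w$, hence $\omega_1(\omega_b^{-1}(v))\leq\tfrac12 v$ for large $v$; applied to $v=\sigma(|\xi|)$ this turns the second exponent into at most $\bigl(\tfrac32\omega'+\tfrac12\omega_c-\omega_2\bigr)(|\xi|)$. Choosing $\omega_2=2\omega'+\omega_c$, again an element of $\mathcal{W}$ with $\omega_2<<\omega$, makes this nonpositive for $|\xi|$ large, so the second term is bounded by $C_{\omega_2}$. Thus $|\hat{T}(\xi)|e^{\omega'(|\xi|)}$ is bounded for every $\omega'<<\omega$, and the projective characterisation of $\mathcal{D}^{\{\omega\}}$ gives $T\in\mathcal{E}^{\{\omega\}}(\Omega)$. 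The only genuinely delicate point is the uniformity in the association step; the bookkeeping ensuring that $\sigma$ and $\omega_2$ stay weight functions strictly below $\omega$ is routine.
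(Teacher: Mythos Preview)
Your overall architecture --- localise, pass to the Fourier side, split $|\hat T(\xi)|$ into an association term and a regularity term, then optimise over $\varepsilon$ --- is exactly that of the paper. The essential divergence is that you try to run everything in the projective presentation (weight functions $\omega_c,\omega_b,\omega_1,\omega_2\ll\omega$), whereas the paper stays in the inductive presentation (scalars $h,b,l,k$ multiplying $\omega$) and handles the uniformity step by an explicit Baire-category ``uniformity lemma''.

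The delicate point you flag is indeed the gap, and your proposed fix does not close it. You assert that a Roumieu ultradistribution has a fixed continuity order $\omega_c\ll\omega$, i.e.\ $|\langle T,\phi\rangle|\le C\,\|\phi\|_{K,1}^{\omega_c}$. This is not available: for $\omega_c\ll\omega$ the Banach space $\mathcal D_K^{(\omega_c),1}$ strictly contains $\mathcal D_K^{\{\omega\}}$ (by the projective description you quote), so $T$ need not extend continuously to it. Consequently your precompactness argument breaks down on two counts: the functionals $e^{\omega_b(1/\varepsilon)}(g_\varepsilon-T)$ are not known to be continuous on the $\omega_{c'}$-space you embed into, and the normalised family $\{e^{-\omega_c(|\xi|)}\psi\,e^{-i\langle\cdot,\xi\rangle}\}_\xi$ is \emph{not} bounded in $\mathcal D_K^{\{\omega\}}$ itself (its $\|\cdot\|_{K,l}^{\omega}$-norm grows like $e^{l\omega(|\xi|)-\omega_c(|\xi|)}\to\infty$ for every $l>0$). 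So there is no ambient space in which you simultaneously have continuity of $T$ and boundedness of the family.

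The paper avoids this by working in the Banach spaces $\mathcal D_K^{(\omega),h}$, $h>0$, which are the steps of the inductive limit defining $\mathcal D_K^{\{\omega\}}$; there $T$ \emph{is} continuous. A Baire argument in each of these spaces yields: for every $h>0$ there exist $b=b(h)>0$ and $C$ with $|\hat T(\xi)-\hat g_\varepsilon(\xi)|\le C\,e^{h\omega(|\xi|)-b\omega(1/\varepsilon)}$. Note the quantifiers are ``$\forall h\ \exists b$'', not your ``fixed $\omega_c$, $\forall\omega_b$''. Combined with the inductive form of regularity ($\exists\, l>0\ \forall k>0$), a purely scalar optimisation (choose $\varepsilon$ so that $b\,\omega(1/\varepsilon)=2h\,\omega(|\xi|)$, then pick $k$) produces a single $\lambda>0$ with $|\hat T(\xi)|e^{\lambda\omega(|\xi|)}$ bounded --- which is already Roumieu regularity. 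There is no need to reach ``for every $\omega'\ll\omega$''.
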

   \begin{proof} 
 As before we can suppose without loss of generaliry that 
 all elements we consider have compact support included in some compact subset $\,K'\,$ included int the interior of some compact subset $K$ of $\omega$
 As the family of elements of $\mathcal{D}^{\{\omega\}} (\Omega)$ with support inside $K$ is the union of all
 $\mathcal{D}^{(\omega),l}(K)$

 $$ \,\forall\,l>0 \,,\forall\,\rho \,\in\,\mathcal{D}^{(\omega),l}(K)  
 [<T-g_\varepsilon,\rho\>] \,=\,0 $$
  Taking onto consideration that for all our definitions we can choose any of the families of norms (see chapter 2)                   
 
The Roumieu regularity of the Roumieu  generalized function with compact support
$ g\,=\,[(g_\varepsilon)]\,$ is translated in Fourier trasform setting  by 
$$\,\exists\,l>0\,\mbox{s.t}\,\forall\,k>0\,\exists
\,C_k >0\,\mbox{s.t.}\,
|\hat{g}_\varepsilon(\xi)|\,<\,
C_ke^{k\omega(\frac{1}{\varepsilon)}-l\omega(|\xi|)}\,$$
 
 to translate the strict asociation into Fourier setting we first have to prove the following uniformity lemma:
 
 \begin{lemma}
 If $(S_{\varepsilon})$ is a net of Roumieu ultradistributions with support in the interior of some fixed compact subset $K'\subset\subset K\subset\subset\,\Omega$ such that for any test fuction 
 $$\rho \,\in\,\mathcal{D}^{\{\omega\}} \,$$we have the fallowing property

 $$[(<S_\varepsilon,\rho>)] \,=\,0\,$$
then 
\begin{equation*}
\forall\,h>0\,,\exists b>0 \,,\exists, C>0 \,,\mbox{s.t.}\,
\exists\,m\,\in \,\NN \,,\mbox{s.t.}\,
\forall\,\rho\,\in\,\mathcal{D}^{(\omega),h}_K\,
\forall \varepsilon \,\leq \,\frac{1}{m}
<S_\varepsilon,\rho>\,\leq\,
Ce^{-b\omega(\frac{1}{\varepsilon})}||\rho||_{K,h} \,.
\end{equation*}
\end{lemma}

\begin{proof}

Let us consider in the Banach 
$$\,\mathcal{D}'^{(\omega),h)}_K
 $$.
  The countable family of closed subsets
  $\{ F^{P,q,m}\} ,(P,q,m)\,\in\NN\times\NN \times\NN\, $
  defined by
  $$ F^{P,q,m}\,=\,\{\rho\,\in\,
  \mbox{s.t.}\,
  \forall \varepsilon \,\leq \,\frac{1}{m}\,,
 | <S_\varepsilon,\rho> |\,\leq\,
Pe^{-\frac{1}{q}\omega(\frac{1}{\varepsilon})}\}\,,$$
 The countable union of those closed symetric sets is the whole Banach space $\mathcal{D}^{(\omega),h}_K$. Thus by the Baire theorem there is at least one of them with no void interior. Thus there exists 
 $(P,q,m)\,\in\NN\times\NN\times \NN, $ and a ball $B(\rho_1,r)$
  in $\mathcal{D}^{(\omega),h}_K$ such that 
  $$  B(\rho_1,r)\,\subset\,F^{P,q}$$
  and clearly the same holds for$B(-\rho_1,r).$
   hence 
   $$ B(0,r)\,\subset B(\rho_1,r)+B(-\rho_1,r)\,
   \subset F^{2p,q,m}$$ and now we can easily conclude with $ m=m$, $b=\frac{1}{q}$ and an adequate C>0\,,
   \end{proof}

   Now taking for $(S_\varepsilon)$ the net
   $( T-g_\varepsilon )$,and ,as before,   
   $\rho = \psi(x)e^{ix\xi}$where $psi$ is a test function equal to one on $K$ ,  we can easiy verify that the strict associaton is translated in Fourier setting by :
 
$$\,\forall h>0\,,\exists C>0 ,\exists b>0 , \exists m>0\,,\exists 
\mbox{s.t.}\,\forall \varepsilon \leq \frac{1}{m}\,,
|\hat{T}(\xi)-\hat{g}_\varepsilon(\xi)|\,\leq\,
 Ce^{h\omega(|\xi|)-b\omega(\frac{1}{\varepsilon})}.$$
 
 The Roumieu regularity of $g$ is translated in Fourier setting by 
$$\exists l>0,\mbox{s.t.}  \forall k>0 \,,\exists m_k >0\, \exists C_k>0\,,
\mbox{s.t.}\,\forall \,\varepsilon \leq \frac{1}{m_k}\,,
 |\hat{g_\varepsilon}(\xi)|\,\leq\,
C_ke^{-l\omega(|\xi|) + k\omega(\frac{1}{\varepsilon})}
\,$$         
 
 As before
 $$|\hat{T}(\xi)|\,\leq\,
 |(\hat{T}(\xi)- \hat{g}_\varepsilon (\xi))|\,+
 \,|\hat{g}_\varepsilon(\xi)|\,\leq
 \,Ce^{+h\omega(|\xi|)
 -b\omega(\frac{1}{\varepsilon})}\,+\,
 C_le^{-l\omega(|\xi|)+
 k\omega(\frac{1}{\varepsilon})}\,.$$ 
 Let us first for given $\,\xi, $ choose $\,\varepsilon\,$ in a way that 
 $b\omega(\frac{1}{\varepsilon})\,=\,2h\omega(|\xi|)\,
 $ 
 
 and thus the first exponent becomes $-h\omega(|\xi|)$                
 
 Putting this value of $\omega(\frac{1}{\varepsilon})$
 in the second exponent we obtain 
 $\,(-l+\frac{2kh}{b})\omega(|\xi|)$
 Choose now k to be equal to $ \frac{bl}{4h}$.We obtain an exponent:
 $-\frac{l}{2}\omega(|\xi|)$
 Puting now 
 $\lambda\,=\,min(h,\frac{l}{2})$
 We obtain the existence of a constant $C_{\lambda}$ such that 
 
 $\,|\hat{T}(\xi)|e^{\lambda\omega(|\xi|)}\,<\,C_{\lambda}\,$ 
  which proves the regularity of $T$.
 
 \end{proof}

      However strong association in Roumieu case can in some cases give information for lower regularities 
       if for example the $\,\omega_b\,$ used for strong association belongs to the same class as the $\Omega"$ used for the regularity of $\, g = [(g_\varepsilon)]$
       we can conclude to a $\{\omega"\}$ regularity for $T$.
       There is also another case where, with a stronger hypothesis on strong association, we can conclude:
       \begin{theorem}
Let $T$ and $g=[(g_\varepsilon)]$ be respectively a Roumieu ultradistribution and a regular Roumieu generalized ultradifferentiable function satisfying the following properties: 
A) they both have compact supports inside the interior of a fixed compact subset $K$.

B) $\omega_\infty$ is such that 
$$\forall \omega' << \omega \,,
||g_\varepsilon||_{K,\omega'}\,=\,
o(e^{\omega_\infty (\frac{1}{\varepsilon})}) .$$
C) for $\varepsilon$  small enough the following holds:

\begin{equation*}
\
\exists \omega_b >> \omega_\infty \mbox{s.t.} \,
\forall\,\omega_2 <<\omega \,\exists C>0 \mbox{s.t.}
\forall \,\rho\,\in \mathcal{D}^{\omega_2}(\Omega )\,
 \,|<T-g_\varepsilon,\rho>|\,\leq\,
Ce^{-\omega_b(\frac{1}{\varepsilon})} 
||\rho||_{K,\omega_2}
\end{equation*}
(this property is called "$\omega_b-$ R-strong association) 
Then T is Roumieu regular 
\end{theorem}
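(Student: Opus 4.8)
The plan is to pass everything to the Fourier side, exactly as in the proof of the previous (strictly associated) Roumieu theorem, and then to kill the two resulting bounds simultaneously by letting $\varepsilon$ depend on $\xi$. First I would localize: since regularity is a local property and, by hypothesis (A), all supports lie in the interior of $K$, I may work with $\hat T$ and $\hat g_\varepsilon$ directly. Translating hypothesis (B) through the norm equivalences of Section 2 gives, for every $\omega' << \omega$, a constant with
$$|\hat g_\varepsilon(\xi)| \leq C_{\omega'}\, e^{\omega_\infty(\frac{1}{\varepsilon}) - \omega'(|\xi|)}$$
for $\varepsilon$ small. For hypothesis (C) I would test against $\rho = \psi(x)e^{ix\xi}$ with $\psi$ equal to $1$ on $K$; since $\|\psi e^{ix\xi}\|_{K,\omega_2} \leq C_{\omega_2} e^{\omega_2(|\xi|)}$ by subadditivity of $\omega_2$ (and finiteness of $\|\psi\|^{\omega_2}_{K,1}$), and since the estimate in (C) is \emph{already} uniform in $\rho$ — so that, unlike the strict case, no Baire/uniformity lemma is needed — this yields, for every $\omega_2 << \omega$,
$$|\hat T(\xi) - \hat g_\varepsilon(\xi)| \leq C_{\omega_2}\, e^{\omega_2(|\xi|) - \omega_b(\frac{1}{\varepsilon})}$$
again for $\varepsilon$ small.

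The core step is the choice of $\varepsilon$ as a function of $\xi$. Fix an arbitrary $\tau << \omega$ and use both displayed bounds with $\omega_2 = \omega' = \tau$. For each large $\xi$ choose $\varepsilon = \varepsilon(\xi)$ by the prescription $\omega_b(\frac{1}{\varepsilon}) = 2\tau(|\xi|)$, which is possible since $\omega_b$ is continuous and increasing to infinity, and which forces $\frac{1}{\varepsilon} \to \infty$, hence $\varepsilon \to 0$, as $|\xi| \to \infty$, so both bounds are in force. Adding them and inserting this choice, the first exponent becomes $\tau(|\xi|) - 2\tau(|\xi|) = -\tau(|\xi|)$. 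For the second term the decisive point is the hypothesis $\omega_b >> \omega_\infty$: since $\omega_\infty(\frac{1}{\varepsilon}) = o(\omega_b(\frac{1}{\varepsilon})) = o(2\tau(|\xi|))$, for $|\xi|$ large we have $\omega_\infty(\frac{1}{\varepsilon}) \leq \tfrac12\tau(|\xi|)$, so the second exponent is at most $\tfrac12\tau(|\xi|) - \tau(|\xi|) = -\tfrac12\tau(|\xi|)$. Hence $|\hat T(\xi)| \leq C\, e^{-\frac12\tau(|\xi|)}$ for $|\xi|$ large.

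Since $\tau << \omega$ was arbitrary (and, by Remark 2.5, $\tfrac12\tau$ ranges over all weak classes strictly below $\omega$), a slightly larger choice of $\tau$ shows that for \emph{every} $\sigma << \omega$ one has $|\hat T(\xi)| = o(e^{-\sigma(|\xi|)})$. I would then invoke part (b) of the Fundamental Lemma — applied to a continuous majorant of $t \mapsto \sup_{|\xi|=t}|\hat T(\xi)|$ — to upgrade this family of sub-$\omega$ bounds to the existence of some $l > 0$ with $|\hat T(\xi)| = o(e^{-l\omega(|\xi|)})$. Finally, condition (c) in the definition of $\mathcal{W}$ forces $e^{-\frac{l}{2}\omega(|\xi|)}$ to be integrable on $\RR^n$, so $\int |\hat T(\xi)| e^{\frac{l}{2}\omega(|\xi|)}\,d\xi < \infty$, i.e. $\|T\|^{\omega}_{K,l/2} < \infty$, and therefore $T \in \mathcal{E}^{\{\omega\}}(\Omega)$.

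The main obstacle is the simultaneous balancing of the second paragraph: one must annihilate both exponentials with a single $\varepsilon = \varepsilon(\xi)$, and this succeeds \emph{only} because the gap $\omega_\infty << \omega_b$ — the quantitative strengthening of strong association encoded in (C) — lets the regularity contribution $\omega_\infty(\frac{1}{\varepsilon})$ be absorbed by the much larger $\omega_b(\frac{1}{\varepsilon}) = 2\tau(|\xi|)$; this is precisely what fails in the counterexample of the preceding proposition, where ordinary strong association provides no such gap. The secondary subtlety is recognizing that the natural estimates cannot reach $e^{-l\omega}$ decay directly, only $e^{-\sigma}$ for $\sigma << \omega$, and that the Fundamental Lemma is exactly the tool that converts this into genuine $\{\omega\}$-regularity.
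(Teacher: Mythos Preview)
Your proof is correct and follows essentially the same route as the paper: pass to Fourier transforms, translate (B) and (C) into pointwise bounds on $\hat g_\varepsilon$ and $\hat T-\hat g_\varepsilon$, then choose $\varepsilon=\varepsilon(\xi)$ so that the association term is killed and exploit the gap $\omega_\infty<<\omega_b$ to absorb the regularity term. The only noteworthy difference is in the closing step: the paper fixes a target $\omega_\lambda<<\omega$, sets $\omega_b(1/\varepsilon)=(\omega_\lambda+\omega_l)(|\xi|)$, and then chooses $\omega_h$ \emph{a posteriori} with $\omega_h>>\omega_\lambda+\omega_\infty\circ\omega_b^{-1}\circ(\omega_\lambda+\omega_l)$ (possible since this composite is $<<\omega$), thereby obtaining $\sup_\xi e^{\omega_\lambda(|\xi|)}|\hat T(\xi)|<\infty$ for all $\omega_\lambda<<\omega$ directly, and concluding via the projective description of $\mathcal{D}^{\{\omega\}}$ (Theorem~3.3). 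You instead work with a single weight $\tau$ for both bounds, obtain $|\hat T(\xi)|=O(e^{-\frac12\tau(|\xi|)})$ for every $\tau<<\omega$, and then invoke the Fundamental Lemma to upgrade to some $l>0$. Your balancing is slightly cleaner; on the other hand the detour through the Fundamental Lemma is unnecessary, since the estimate ``$\forall\sigma<<\omega$, $|\hat T(\xi)|=O(e^{-\sigma(|\xi|)})$'' already \emph{is} the projective criterion for $T\in\mathcal{E}^{\{\omega\}}$.
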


\begin{proof}
As we have compact supports all inside $K$ we can pass to Fourier transform setting . In the sequel of our proofs remember  we can use freely any choice of the equivalent family of seminorms for the norms on test functions. 
 By Wiener type theorems 
  we only have to prove that 
  $$\forall \,\omega_\lambda<<\omega\,,
  sup(e^{\omega_\lambda (|\xi|)|}|\hat{T}(\xi)|) < \infty
  $$
  In Fourier setting the hypothesis B)
  amounts to 
  B')$$ \forall \omega_h<<\omega\,\exists C_h \ ,
  \mbox{s.t.}\, \forall \xi\in \RR^n
  | \hat{g_\varepsilon}(\xi)|\,\leq \, 
  C_h e^{\omega_\infty(\frac{1}{\varepsilon})-
  \omega_h(|\xi|)}.
  $$

  The hypothesis C) amounts to 
  C')$$ \forall \omega_l << \omega \,,\exists C_l>0\,
  \mbox{s.t.}\,
  |\hat{T}(\xi)-\hat{g_\varepsilon}(xi)|\,\leq \,
\,.$$  Thus in an analogous way as in  the proof of Beurling case
  \begin{equation*}
  |\hat{T}(\xi)|\,\leq\,
  |\hat{T}(\xi)-\hat{g_\varepsilon}(\xi)|\,+\,
  |\hat{g_\varepsilon}(\xi)|\,\leq \,
  C_le^{\omega_l([\xi|]-\omega_b(\frac{1}{\varepsilon})}
  \,+\,C_h e^{\omega_\infty(\frac{1}{\varepsilon})-\omega_h(|\xi|)}
  \end{equation*}
  
  Thus for any $\omega_\lambda$ we have :
  \begin{equation*}
 e^{\omega_\lambda (|\xi|)} |\hat{T}(\xi)| \,\leq
  C_h e^{\omega_\lambda(|\xi|)+
  \omega_\infty(\frac{1}{\varepsilon})-
  \omega_h(|\xi|)} \,+\,
   c_le^{\omega_\lambda(|\xi|)  +   \omega_l(|\xi|)-\omega_b(\frac{1}{\varepsilon})}
   \end{equation*}
   Put $\omega'_\lambda = \omega_\lambda +\omega_l$
   For $|\xi|$ large enough consider $\varepsilon$ such that $$\omega_b(\frac{1}{\varepsilon})\,=\,\omega'_\lambda(|\xi|)$$ i.e.
   $$\frac{1}{\varepsilon}\,=\,
   (\omega_b)^{-1}(\omega'_\lambda(|\xi|).$$
   
   This implies that the second term of the above equality remains bounded 
   as for the first exponent , putting this value of $\varepsilon $ we find 
   an exponent:
   
$$  \omega_\lambda(|\xi|)+
  \omega_\infty(\omega_b)^{-1}(\omega'_\lambda(|\xi|))-
  \omega_h(|\xi|) \,$$
  As $$\omega_\infty << \omega_b$$ we can verify
  that $$\omega_\infty(\omega_b)^{-1}(\omega'_\lambda) <<
  \omega $$
   Thus we can choose $\omega_h$ such that 
    $$\omega_\lambda(|\xi|)+   \omega_\infty(\omega_b)^{-1}(\omega'_\lambda )\,<<\,\omega_h$$ and conclude that also the first term is bounded 
    which proves the regularity of $T$ 
    \end{proof}     

\section{equalities}

In this section we investigate in our setting, some questions which were investigated in the case of usual Colombeau generalized fuctions in 
 \cite{P.S.V.} ,and in the ultradifferentiable with sequences model in  \cite{P.R.S.Z}..We investigate  some  cases in which we can conclude that two elements are equal or that en element is equivalent  to a generalized constant .

\begin{theorem}
Let f be an element of
 $\, \mathcal{G}^{(\omega)}(\RR^n)\,$
(resp of $\,\mathcal{G}^{\{\omega\}}(\RR^n)\,)$ 
Then if 
$$\forall x\in\,\RR^n\,f(.+x) -f(.) \,
=\,0 
$$ ,(i.e. f is invariant by translations) , then $f$ is equivalent to a generalized Beurling constant (resp generalized Roumieu constant.)
 \end{theorem}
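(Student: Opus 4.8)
The plan is to show that the translation invariance forces every first-order derivative $\partial_j f$ to be zero in the algebra, and then to integrate back to identify $f$ with the class of the constant net $(g_\varepsilon(0))$. Fix a representative $(g_\varepsilon)$ of $f$. Since the translation $h\mapsto g_\varepsilon(\cdot+he_j)$ is continuous and linear, the hypothesis $f(\cdot+x)-f(\cdot)=0$, specialized to $x=he_j$, means that for each \emph{fixed} $h$ the net $(g_\varepsilon(\cdot+he_j)-g_\varepsilon(\cdot))_\varepsilon$ is negligible, hence so is its constant multiple $\tfrac1h(g_\varepsilon(\cdot+he_j)-g_\varepsilon(\cdot))$; in particular $\sup_K|g_\varepsilon(\cdot+he_j)-g_\varepsilon(\cdot)|$ is negligible for each fixed $h$. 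The elementary Taylor estimate $|\partial_j g_\varepsilon(z)-\tfrac1\delta(g_\varepsilon(z+\delta e_j)-g_\varepsilon(z))|\le\tfrac{|\delta|}{2}\sup|\partial_j^2g_\varepsilon|$ links these difference quotients to $\partial_j g_\varepsilon$. The remainder factor is moderate: using condition (c), polynomials are dominated by $e^{\omega}$, so $\sup_L|\partial_j^2g_\varepsilon|\le C\int|\xi|^2|\hat g_\varepsilon(\xi)|d\xi\le C'\|g_\varepsilon\|_{L,1}^\omega=O(e^{k_0\omega(1/\varepsilon)})$ for a suitable order $k_0$ given by moderateness.

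The \emph{main obstacle} is that the shift-difference is only assumed negligible for each fixed $h$, whereas to take $\delta=\delta(\varepsilon)\to0$ in the Taylor estimate I need negligibility that is uniform for $h$ near $0$, with a uniform order. I will produce this by a Baire category argument on the parameter $h$, in the spirit of the uniformity lemma already used in the Roumieu section. For a fixed compact $K$ consider the sets $A_N=\{h\in[-1,1]:\forall\varepsilon\le1/N,\ \sup_K|g_\varepsilon(\cdot+he_j)-g_\varepsilon(\cdot)|\le Ne^{-\tau_N\omega(1/\varepsilon)}\}$, where the threshold order is taken as a fixed large $\tau_N\equiv m^*$ in the Beurling case (permissible since Beurling negligibility holds to all orders) and as $\tau_N=1/N$ in the Roumieu case. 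Each $A_N$ is closed by continuity in $h$, and the fixed-$h$ negligibility shows $\bigcup_N A_N=[-1,1]$; Baire then gives an interval $(a-r,a+r)$ and an $N_0$ on which the estimate holds for all small $\varepsilon$. Passing through the common reference $g_\varepsilon(\cdot+ae_j)$ by the triangle inequality and translating the compact by $ae_j$, one obtains a fixed compact $L$, a radius $r'>0$, an order $l_1>0$ and $N_0$ with $\sup_L|g_\varepsilon(\cdot+\delta e_j)-g_\varepsilon(\cdot)|\le 2N_0\,e^{-l_1\omega(1/\varepsilon)}$ for all $|\delta|\le r'$ and $\varepsilon\le 1/N_0$; here $l_1=m^*$ can be made arbitrarily large in the Beurling case, while in the Roumieu case $l_1=1/N_0$ is merely some positive number.

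Inserting this into the Taylor estimate yields $\sup_L|\partial_j g_\varepsilon|\le \tfrac{2N_0}{|\delta|}e^{-l_1\omega(1/\varepsilon)}+\tfrac{|\delta|}{2}Ce^{k_0\omega(1/\varepsilon)}$, and I optimise by choosing $|\delta|=\delta(\varepsilon)=e^{-\frac{l_1+k_0}{2}\omega(1/\varepsilon)}$, which is $\le r'$ for $\varepsilon$ small and gives $\sup_L|\partial_j g_\varepsilon|=O(e^{-\frac{l_1-k_0}{2}\omega(1/\varepsilon)})$. In the Beurling case $k_0$ is fixed and $l_1=m^*$ may be chosen larger than $k_0$; in the Roumieu case $l_1$ is fixed and moderateness lets me take $k_0=l_1/2<l_1$. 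In both cases the exponent is negative of the required type, so $\sup_L|\partial_j g_\varepsilon|$ is negligible for every compact $L$ and every $j$, i.e. $\partial_j f=0$.

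Finally I integrate. Set $c_\varepsilon=g_\varepsilon(0)$, a moderate net of constants. For $y$ in a compact $L$ the segment $[0,y]$ stays in the fixed compact $\mathrm{conv}(L\cup\{0\})$, and $g_\varepsilon(y)-c_\varepsilon=\int_0^1\nabla g_\varepsilon(ty)\cdot y\,dt$, so $\sup_L|g_\varepsilon-c_\varepsilon|$, and a fortiori $\|g_\varepsilon-c_\varepsilon\|_{L^2(L)}$, is negligible. Since $(g_\varepsilon-c_\varepsilon)$ is moderate, the negligibility criterion established earlier (local $L^2$-negligibility is enough for a moderate net, in both the Beurling and Roumieu cases) shows that $(g_\varepsilon-c_\varepsilon)$ is negligible, that is $f=[(c_\varepsilon)]$ is a generalized constant of the corresponding type. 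The two cases run in parallel, differing only in the quantifier bookkeeping on the orders noted above.
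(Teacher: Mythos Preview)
Your argument is correct and shares the paper's two essential ingredients: a Baire category argument on the translation parameter to upgrade pointwise-in-$h$ negligibility to locally uniform negligibility, followed by the $L^2$-negligibility criterion for moderate nets. The execution, however, is genuinely different. The paper runs Baire with local $H_s$ norms ($s>n/2$), uses Sobolev embedding to pass to sup norms, and then \emph{composes} small translations to obtain the uniform bound for all $x$ in a large ball, which directly gives the negligibility of $f_\varepsilon(\cdot)-f_\varepsilon(0)$. You instead run Baire with plain sup norms, feed the uniform small-shift bound into a Taylor estimate, optimise the step $\delta=\delta(\varepsilon)$ to conclude that $\sup_L|\partial_j g_\varepsilon|$ is negligible, and finally integrate the gradient along segments to recover $g_\varepsilon-g_\varepsilon(0)$.

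What each buys: the paper's route is shorter once the Sobolev embedding is taken for granted and avoids the derivative detour; your route is more elementary (only sup norms and first/second derivatives, no $H_s$ spaces) and makes the loss of order in the optimisation explicit. Two small points worth recording for clarity: in the Beurling case you must repeat the Baire step for each target order $m^*$ (so $N_0,a,r$ depend on $m^*$), which is fine; and the compact on which you bound $\partial_j^2 g_\varepsilon$ should be taken slightly larger than $L$ to accommodate the shift $\delta$, while the starting compact $K$ in the Baire step should be a $1$-enlargement of the target so that the shift by $a\in[-1,1]$ still covers it. Both adjustments are routine.
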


\begin{proof}
 In both cases the strategy of the proof is the following:
 We express the hypothesis of negligibility in terms of the neglibilty of the nets of $\,L_2\,$ norms on the balls $\,B(0,2R)\,$ ($R$ going to infinity ( theorem 5.1.1,) and with Baire theorem  ,we find a "uniformity" of this for the bounds of the  element $\,h\,$ of the translation..  We use the fact that the definition of negligibility holds for all derivatives up to any givern integer and the fact that bounds of $ H_s   $ norms for
  $\,s\,>\, \frac{n}{2}$ on compact subsets
 allow to obtain bounds of sup-norms in the ball for
  $\,|f_\varepsilon(t)\,-\,f_\varepsilon(0)|\,.$ This now allows to come back to $L_2$ norms and thus obtain the neglibility 
   of the net  $\,|f_\varepsilon(t)\,-\,f_\varepsilon(0)|\,$in the sense of our definitions. 
   \\
1) BEURLING CASE

In Beurling case the hypothesis amounts to saying that for any
 $\, x \,\in\,\RR^n$  the net
$$\,f_\varepsilon(t+x)\,-\,f_\varepsilon(t)$$ isa negligible net of ulttradifferentiable functions .
 This, in our setting and in view of the charachterisation of negligibility in theorem 5.11,and the fact that the same negligibility conditions hold for all derivatives  amounts to

$$\,\forall \,B(0,R\,,\forall\,p\,\in \,\NN\,,\forall x \,\in\,B(0,R)\,,\exists \,m\,\in \NN \,\mbox{s.t.}\,
\,\forall \,\varepsilon \,<\,\frac{1}{m}\,,
e^{p\omega(\frac{1}{\varepsilon})}||f_\varepsilon(t+x)\,-\,
f_\varepsilon(t) ||_{H_s,2R}\,\leq\,1 $$

 where $\,||.||_{H_s,2R}\,$ is the local $\,H_s\,$ norm in the ball  $B(0,2R)\,.$
 We will now choose $\, s>\frac{n}{2}\,$ and we will prove that :
$$,\forall \,\varepsilon \,<\,\frac{1}{m}\,, \forall x\,\in\,B(0,R)\,,
e^{p\omega(\frac{1}{\varepsilon}}||f_\varepsilon(t+x)\,-\,
f_\varepsilon(t) ||_{H_s,2R}\,\leq\,C\,.$$

\  which will allow us to conclude to the negligibility of the net of local  $^L_2$ norms of
   $f_\varepsilon(t) \,-\,f\\varepsilon(0)$ In the balls  
   $\,B(0,R)\,$
    
    As this holds for all such balls ,  is the proof that our net is equivalent to the  generalized constant net
     $  \,f_\varepsilon(0)\,.$ 
  This is just the application of the negligibility ctiterion of theorem 4.11.
 
 For any given ball $B(0,R)$ , for any couple  of integers 
 $(p,l,)$, consider the set 
$$F_{p,l}= \{x\in B(0,2R)\,,\forall \varepsilon \leq \frac{1}{l}\,,
e^{p\omega(\frac{1}{\varepsilon})} ||(f_\varepsilon(x+t)-f_\varepsilon (t) ||_{H_s,2R})\,
\leq\,1\}$$\

For any fixed $p$, the union of all $F_{p,l}$  covers also the ball $B(0,R)$,  thus By Baire theorem, there exists $l$ such that the interior of $F_{p,l}$ is not void, thus contains some ball $B(x_1,r_1)$ with $r_1< R $] and
 $\,x_1\,\in \,B((0,R)\,$

By noticing now that for any
 $\,h\,\mbox{s.t.}|h|<r_1\,$ we have the following inequality :

$$|f_\varepsilon (t+h)-f_\varepsilon(t)|\,\leq\,
|f_\varepsilon(x_1+h+t)-f(h+t)|
\,+\,|f_\varepsilon (x_1+h+t)-|f(t)|\,.$$ We can now easily conclude that for any translation by $h$ of module less than
 $r_1$

$$e^{p\omega(\frac{1}{\varepsilon})}||(f_\varepsilon(h+t)-f_\varepsilon (t) ||_{Hs,2R})\,\leq\,2 \,.$$ as any transaltion as large as needeed is a composition of translations of $h$ s.t. $h<r_1$ 

we can conclude  that :
$$\,\forall \,B(0,R)\,\forall\,p\,\in \,\NN\,,\exists\, C>0 \,\exists \,m\,\in \NN \,\mbox{s.t.}\,
\,\forall \,\varepsilon \,<\,\frac{1}{m}\,\forall x\,\in\,B(0,R)\,,
e^{p\omega(\frac{1}{\varepsilon}}||f_\varepsilon(t+x)\,-\,
f_\varepsilon(t) ||_{H_s,2R}\,\leq\,C,.$$

  Thus, we can conclude that analogous bounds hold for the supremum norm  in any ball $\,B(0,2R)\,.$

Thus 
\begin{equation*}
\exists C>0,,\ \mbox{s.t.}\,\sup_{t\in B(0,R),h\in B(0,R)}
\{e^{p\omega(\frac{1}{\varepsilon})}
|f_\varepsilon(h+t)-f_\varepsilon (t) |\}
\,\leq\,C.
\end{equation*}

As This holds for all $p$
We can  conclude that the sup-norms in the balls
$\,B(0,R)\,$  are negligible and in this ball the sup-norm of

 $$\,(f_\varepsilon(t) - f_\varepsilon(0)) $$
 is a negligible net and thus the same holds for its  $L_2$ norm 
 in the ball which implies (as this holds in all such balls) that our generalized function is equal to the generalized constant 
 $\,[(f_\varepsilon(0)))]\,$ in the sense of our definitions .

ROUMIEU CASE

In Roumieu case  , with the same reasoning the negligibility holds also for all derivatives, the hypothesis amounts to:
$$\,\forall\, R>0 \,\forall x \in B(0,R)\,,\exists p\,\in \NN,  \,,\,m\,\in \NN\,,
\mbox{s.t}\,
\,,\forall \varepsilon\leq \frac{1}{m}\,,
(e^{\frac{1}{p}\omega(\frac{1}{\varepsilon})}
||(f_\varepsilon(x+t)-f_\varepsilon (t) ||_{H_s,2R})\leq\,1.$$

.
For any couple of integers $(p,l)$, consider the set closed  set 
$$H_{p,l}= \{x\in B(0,R)\,,\forall\varepsilon \leq \frac{1}{l}\,e^{\frac{1}{p}\omega(\frac{1}{\varepsilon})}
||(f_\varepsilon(x+t)-f_\varepsilon (t))||_{H_s,R}\,\leq\,1\}$$
The union of this countable family of closed  subsets  contains the ball$\,B(0,R)\,$
thus by ||Baire theorem there exists at least one of them of non void interior 
Thus there exist $(p,l) \in \NN^2$ containing some ball
$B(x_2, r_2)$ with $\,r_2<R\,.$  Notice that  as in Beurling case that  
$$|f_\varepsilon (t+h)-f_\varepsilon(t)|\,\leq\,
|f_\varepsilon(x_1+h+t)-f(h+t)|
\,+\,|f_\varepsilon (x_1+h+t)-|f(t)|\,.$$
Thus as before 
for $\,\varepsilon\leq \frac{1}{l}$ and $h, \mbox{s.t.} <|h|<R$
we have $$
e^{\frac{1}{p}\omega(\frac{1}{\varepsilon})}||(|f_\varepsilon (t+h)-f_\varepsilon(t)||_{H_s,R}\,\leq\,C  $$

 (any translation can be considered as the composition of translations of $h$ with $\,|h|<r_1,$

Now we conclude exactly as in the Beurling case 

 and by the same arguments  conclude thar the net is equivalent to the net $(f_\varepsilon(o)\,.$ in the Roumieu sense.
Thus It is a Roumieu  generalized constant.

\end{proof}

 One can easily see that if a net $(h_\varepsilon)$
  is such that for all test functions $\rho$
  we have $$[(<h_\varepsilon,\rho>)]\,=\,0$$ this does not imply that the net is negligible (for example take the difference $$ h_\varepsilon \,= \,
  \phi_{1, \varepsilon}\,- \,\phi_{2,\varepsilon}$$
   of two different nets used for the embedding of ultradistributions ; However if we impose this to hold 
   for all test functions  of some "lower" regularity we have the following results, analogous to what was done in 
   \cite{P.S.V.} for usual Colombeau generalized fubctions and in \cite{P.R.S.Z} for the sequence $M_p\,,$ model of ultradifferentiable functions (the results are analogous, but the proof methods are quite different).

 For Beurling case:

\begin{theorem}
     If an element ,$ \,h \,=\, [(h_\varepsilon)]\,$ of
 $\mathcal{G}^{(\omega)}(\Omega) \,$ is such that 
 $$\exists k>0\,,\,\mbox{s.t}\,,\forall \rho 
 \in \mathcal{D}^{(\omega),k}(\Omega)\,
 [(<h_\varepsilon,\rho)>)]\,=\,0\,$$
 then $h=0$
\end{theorem}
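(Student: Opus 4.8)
The plan is to localise, upgrade the hypothesis to a \emph{uniform} (in the test function) decay estimate by a Baire category argument, and then read off the decay of $\widehat{h_\varepsilon}$ by probing with exponentials, exactly as in the Wiener-type computations of the comparison theorems. Since negligibility is local and, by Theorem 5.11, controlled by local $L^2$ norms, I would fix a compact $K\subset\subset\Omega$ together with a cutoff $\psi\in\mathcal{D}^{(\omega)}(\Omega)$ equal to $1$ on $K$ and supported in a slightly larger compact $K_1$. It then suffices to prove that $\|\psi h_\varepsilon\|_{L^2}=o(e^{-m\omega(\frac{1}{\varepsilon})})$ for every $m>0$, since this dominates the local $L^2$ norm on $K$ and $K$ is arbitrary.

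The first and decisive step is a uniformity lemma in the same spirit as the one used in the Roumieu comparison theorem. Working in the \emph{Banach} space $\mathcal{D}^{(\omega),k}_{K_1}$ — this is exactly where the assumption that the regularity level $k$ is \emph{fixed} is used, since completeness is what makes Baire's theorem available — I would fix $b>0$ and consider the sets
$$F_{b,m}=\{\rho\in\mathcal{D}^{(\omega),k}_{K_1}:\ |<h_\varepsilon,\rho>|\le e^{-b\omega(\frac{1}{\varepsilon})}\ \ \forall\,\varepsilon\le \tfrac{1}{m}\}.$$
Each $F_{b,m}$ is closed (for fixed $\varepsilon$ the functional $\rho\mapsto<h_\varepsilon,\rho>$ is continuous on $\mathcal{D}^{(\omega),k}_{K_1}$) and symmetric, and their union over $m$ is the whole space, because the hypothesis $[(<h_\varepsilon,\rho>)]=0$ means precisely $<h_\varepsilon,\rho>=o(e^{-b\omega(\frac{1}{\varepsilon})})$ for every $b$. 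Baire's theorem produces a ball $B(\rho_0,r)\subset F_{b,m}$; symmetry and the triangle inequality then give $B(0,r)\subset\{|<h_\varepsilon,\rho>|\le 2e^{-b\omega(\frac{1}{\varepsilon})}\}$, and after rescaling one obtains, for every $b>0$, constants $C_b,m_b$ with
$$|<h_\varepsilon,\rho>|\le C_b\,\|\rho\|_{K_1,k}^{\omega}\,e^{-b\omega(\frac{1}{\varepsilon})}\qquad(\forall\,\rho,\ \varepsilon\le \tfrac{1}{m_b}).$$

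Next I would probe with $\rho_\xi(x)=\psi(x)e^{-ix\xi}$, so that $<h_\varepsilon,\rho_\xi>=\widehat{\psi h_\varepsilon}(\xi)$, while subadditivity of $\omega$ yields $\|\rho_\xi\|_{K_1,k}^{\omega}\le \|\psi\|_{K_1,k}^{\omega}\,e^{k\omega(|\xi|)}$. The uniform estimate becomes the pointwise Fourier bound $|\widehat{\psi h_\varepsilon}(\xi)|\le C'_b\,e^{k\omega(|\xi|)-b\omega(\frac{1}{\varepsilon})}$ valid for every $b$. To convert this into $L^2$ negligibility I would split the frequency integral at a radius $R_\varepsilon$ defined by $\omega(R_\varepsilon)=A\,\omega(\frac{1}{\varepsilon})$: on $|\xi|\le R_\varepsilon$ the pointwise bound handles the low frequencies, while on $|\xi|>R_\varepsilon$ the moderateness of $\psi h_\varepsilon$ in its $L^2$ form (via the equivalence of seminorms of Section 3, giving $\int|\widehat{\psi h_\varepsilon}|^2 e^{2l\omega}=o(e^{2k'(l)\omega(\frac{1}{\varepsilon})})$) handles the high frequencies. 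Given a target $m$, choosing first $l$ large, then $A=(k'(l)+m)/l$, and finally $b$ larger than $kA+m$ makes both pieces $o(e^{-2m\omega(\frac{1}{\varepsilon})})$, whence $\|\psi h_\varepsilon\|_{L^2}=o(e^{-m\omega(\frac{1}{\varepsilon})})$ for all $m$. By the negligibility criterion of Theorem 5.11 this forces $(h_\varepsilon)$ to be negligible on $K$, and as $K$ is arbitrary, $h=0$.

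The main obstacle is the Baire step: it is what genuinely exploits the finiteness-at-a-single-level hypothesis $\rho\in\mathcal{D}^{(\omega),k}$ (a Banach space rather than the non-metrisable inductive limit $\mathcal{D}^{(\omega)}$), since the pointwise-in-$\rho$ decay is a priori non-uniform and only the uniform dual-norm estimate can be fed into the exponential-probing argument. The subsequent balancing of the two exponents is routine but must be carried out in the right order of quantifiers, bearing in mind that $k$ is fixed once and for all while $b$ and $l$ may be chosen as large as needed.
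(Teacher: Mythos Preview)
Your proof is correct, and the Baire step is indeed the heart of the matter, exactly as in the paper. Where you diverge from the paper is \emph{after} the uniform dual-norm estimate
\[
|\langle h_\varepsilon,\rho\rangle|\le C_b\,\|\rho\|_{K_1,k}^{\omega}\,e^{-b\omega(1/\varepsilon)}.
\]
You probe with $\rho_\xi=\psi\,e^{-i\cdot\xi}$, obtain a pointwise Fourier bound, and then run a high/low frequency splitting balanced against moderateness. The paper instead makes the single observation that, after localisation, $h_\varepsilon$ itself lies in the Banach space $\mathcal{D}^{(\omega),k}_K$, so one may simply take $\rho=h_\varepsilon$ (or $\bar h_\varepsilon$) in the uniform estimate and get
\[
\|h_\varepsilon\|_{L^2}^2=|\langle h_\varepsilon,h_\varepsilon\rangle|\le C_b\,\|h_\varepsilon\|_{K,k}^{\omega}\,e^{-b\omega(1/\varepsilon)}=o\bigl(e^{(m-b)\omega(1/\varepsilon)}\bigr),
\]
where $m$ comes from moderateness at level $k$ and does not depend on $b$; letting $b\to\infty$ finishes at once via the $L^2$ negligibility criterion. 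So your exponential-probing and frequency-splitting machinery, while perfectly sound and in the spirit of the comparison theorems of Section~7, is not needed here: the paper's self-pairing trick collapses the whole endgame to two lines.
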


\begin{proof}
We will use the equivalence of families of seminorms already given in the first chapter and use here mainly the
$"L^2" $ seminorms.Clearly by partition of unity we can without loss of generality suppose that there is a compact subset $K$ of $\Omega$ such that all 
$h_\varepsilon$ have compact supports included in the interior of $\,K\,.$ For simplicity of natations the norms we use will be simply noted $\,|| .||_{K,m}\,$ and here it will mean the norms defined  by $L_2$  estimations .

As usually by partition of unity we can suppose, without loss of generality, that all functions considered here, have support in the interior of some ball $B$
 The hypothesis is equivalent to saying that

 One can easily verify that The hypothesis amounts to:

 \begin{equation*}
\forall \,a>0\,\,\forall\,\rho \, \in \mathcal{D}_B^{(\omega),k}\,,
\exists M\in\NN \, ,\exists l \,\in \NN\,\mbox{s.t.}\,
\forall \,\varepsilon \,\leq\,\frac{1}{l}\,,
 |<h_\varepsilon,\rho>|\,\leq\,
 Me^{-a\omega(\frac{1}{\varepsilon})}
 \end{equation*}
 
 Let us consider now the following  family of closed symetric subsets of the Banach space
  $\mathcal{D}_B^{(\omega),k}$,parametrized by integers $a,M,l$
 
$$F_{a,M,l} \,=\, \{\, \rho \, \in \mathcal{D}_{B^{(\omega),k}} \, \mbox{s.t.}\,\forall \,\varepsilon \,\leq\,\frac{1}{l}\, |<g_\varepsilon, \rho>| \,\leq\, M  e^{-a\omega(\frac{1}{\varepsilon})}\, \}.$$

 For any given $a>0$ this family is countable and covers the 
 Banach space $\mathcal{D}_B^{(\omega),k)}\,$.
 Thus by Baire theorem there exists at least one of them
 $F_{a,M_0,l_0}$ with non-empty interior and hence it containes a ball $B(\rho_0 , r)$ As it is symetric it contains also 

$B(-\rho_0 , r)$ . Thus 
$$ \,B(0,r) \,\subset \,B(\rho_0 , r)\,+\,B(-\rho_0,r)\,
\subset\,F_{a,2M,l}\,$$
Now we can easily conclude that

  $$\,\exists s>0\, \exists C_a>0\,\mbox{s.t.}\,\forall \rho \,\in \,
  \mathcal{D}_B^{(\omega),k}
  \forall\, \varepsilon \,\leq\,
  \frac{1}{s}\,
  |<h_\varepsilon,\rho>|\,\leq \,
  C_a e^{-a\omega(\frac{1}{\varepsilon})} ||\rho||_{B,k} \,.
  $$  
  Now for every $\,\varepsilon\,\leq \,\frac{1}{s}$ 
   put $\rho\,=\,h_\varepsilon$ . thus for $\varepsilon$ small enough:
   \begin{equation*}
  |<h_\varepsilon,h_\varepsilon>|\,\leq \,
  C_a e^{-a\omega(\frac{1}{\varepsilon})} ||h_\varepsilon||_{B,k} 
 \end{equation*}
 But as $h$ is moderate there exists $m$ (depending only  on $k$)
 such that  $$ ||h_\varepsilon||_{B,k}\,=\
 ,o(e^{m\omega(\frac{1}{\varepsilon})}) $$
 
 Thus

 $$|<h_\varepsilon,h_\varepsilon>|\,
 =\,o(e^{(-a+m)\omega(\frac{1}{\varepsilon})} \,.$$
 
 As this holds for any $a$ as large as necessary , clearly 
 $\,|<h_\varepsilon,h_\varepsilon>|\,$ is a negligible net and thus $\,[(h_\varepsilon)] \,=\,0\,$ 
 
 \end{proof}
 
 The Roumieu case is slightly different  
 
 \begin{theorem}
   If an element $h=[(h_\varepsilon)]$ of
 $\mathcal{G}^{\{\omega\}}(\Omega) \,$ is such that 
 $$\,\exists \,,\omega_1 <<\omega \,\mbox{s.t} \, \forall \rho 
 \in \mathcal{D}^{(\omega_1)}(\Omega)\,
 [(<h_\varepsilon,\rho)>)]\,=\,0  $$
 then $h=0$
 \end{theorem}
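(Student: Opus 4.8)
The plan is to prove the projective form of negligibility, namely that $(h_\varepsilon)$ lies in $\tilde{\mathcal{N}}^{\{\omega\}}(\Omega)$: for every pair $\omega_a<<\omega$ and $\omega_b<<\omega$ one has $\int|\hat{h}_\varepsilon(\xi)|e^{\omega_a(\xi)}d\xi=o(e^{-\omega_b(\frac{1}{\varepsilon})})$. By the already established equivalence $\mathcal{N}^{\{\omega\}}=\tilde{\mathcal{N}}^{\{\omega\}}$ this gives $h=0$. As usual, non quasi-analyticity provides a partition of unity, so I may assume all $h_\varepsilon$ have support in a fixed ball $B$ and pass to the Fourier side. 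I note that $\mathcal{D}^{(\omega_1)}(\Omega)$ is a Fréchet space for the seminorms $||\cdot||^{\omega_1}_{B,n}$, $n\in\NN$, that by the projective description of $\mathcal{D}^{\{\omega\}}$ it contains $\mathcal{D}^{\{\omega\}}$ and hence every $h_\varepsilon$, and that for $\psi\in\mathcal{D}^{(\omega_1)}$ equal to $1$ near $B$ the functions $\rho_\xi=\psi\,e^{i\langle\cdot,\xi\rangle}$ satisfy $||\rho_\xi||^{\omega_1}_{B,j}\leq C_\psi e^{j\omega_1(\xi)}$, by subadditivity of $\omega_1$.

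The first real step is a Baire argument producing a uniform Fourier bound. Fix a weight $\omega_2<<\omega$ (its precise value is chosen only later). By hypothesis $[(<h_\varepsilon,\rho>)]=0$ for every $\rho\in\mathcal{D}^{(\omega_1)}$; by the fundamental lemma this Roumieu-negligibility of a scalar net is the same as $<h_\varepsilon,\rho>=o(e^{-\omega'(\frac{1}{\varepsilon})})$ for all $\omega'<<\omega$, in particular at the rate $\omega_2$. Hence the closed symmetric sets $F_{M,l}=\{\rho:\,|<h_\varepsilon,\rho>|\leq Me^{-\omega_2(\frac{1}{\varepsilon})}\ \forall\varepsilon\leq\frac{1}{l}\}$ cover the Fréchet space, and $\rho\mapsto<h_\varepsilon,\rho>$ is continuous, so Baire's theorem furnishes some $F_{M_0,l_0}$ with interior; by symmetry a seminorm-ball $\{||\rho||^{\omega_1}_{B,j}<\delta\}$ sits inside $F_{2M_0,l_0}$. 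Homogeneity then yields $|<h_\varepsilon,\rho>|\leq C||\rho||^{\omega_1}_{B,j}e^{-\omega_2(\frac{1}{\varepsilon})}$ for $\varepsilon\leq\frac{1}{l_0}$, and testing against $\rho_\xi$ gives the pointwise estimate $|\hat{h}_\varepsilon(\xi)|\leq Ce^{j\omega_1(\xi)-\omega_2(\frac{1}{\varepsilon})}$ for all $\xi$.

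To reach the prescribed decay I split $\int|\hat{h}_\varepsilon|e^{\omega_a}$ at a frequency cutoff $R=R(\varepsilon)$. Given $\omega_a,\omega_b$, I first pick $\omega_c<<\omega$ dominating both $\omega_a$ and $\omega_1$, and use the (projective) moderateness of $(h_\varepsilon)$ to obtain $\omega_d<<\omega$ with $\int|\hat{h}_\varepsilon|e^{\omega_c}=o(e^{\omega_d(\frac{1}{\varepsilon})})$. I set $R=\omega_c^{-1}\big(2(\omega_d+\omega_b)(\frac{1}{\varepsilon})\big)$; since $\omega_c\in\mathcal{W}$ and $\omega_1,\omega_a<<\omega_c$, both $(\omega_1+\omega_a)(R)$ and $\log R$ are $o(\omega(\frac{1}{\varepsilon}))$ as functions of $\frac{1}{\varepsilon}$. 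Only now do I fix $\omega_2<<\omega$ dominating $\omega_d$, $\omega_b$ and these last two functions, and run the Baire step with this $\omega_2$. On $\{|\xi|\leq R\}$ the pointwise bound contributes $e^{-\omega_2}$ against a volume factor times $e^{j\omega_1(R)}$ which is only $e^{o(\omega_2)}$ — here the Baire constant $j$ is harmless precisely because it multiplies $\omega_1(R)$ and the strong inequality ignores constant multiples — so this part is $o(e^{-\omega_b})$. On $\{|\xi|>R\}$ I write $e^{\omega_a}=e^{\omega_c}e^{\omega_a-\omega_c}$ with $\omega_a-\omega_c\leq-\tfrac12\omega_c(R)$ and use the moderateness bound, so the factor $e^{-\frac12\omega_c(R)}=e^{-(\omega_d+\omega_b)}$ beats $e^{\omega_d}$ and again leaves $o(e^{-\omega_b})$. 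As $\omega_a,\omega_b$ were arbitrary, this shows $(h_\varepsilon)\in\tilde{\mathcal{N}}^{\{\omega\}}$, hence $h=0$.

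The main obstacle is exactly this coordinated choice of the auxiliary weights $\omega_c,\omega_2$ and of the cutoff $R$, engineered so that the low-frequency and high-frequency regimes decay simultaneously at the required rate $\omega_b$. The decisive structural point is that the index $j$ produced by Baire enters only through $j\omega_1$ evaluated at the single cutoff $R$, so it is absorbed by a strong inequality; this is why the direct Beurling strategy (inserting $\rho=h_\varepsilon$ and invoking the $L^2$ negligibility criterion established above) fails in the Roumieu setting — there $j$ would appear inside the integral $\int|\hat{h}_\varepsilon|e^{j\omega_1}$, whose moderateness exponent depends genuinely on $j$ and forces a circular choice of scales. Replacing that integral pairing by the frequency splitting is what makes the argument go through, and is the precise sense in which the Roumieu method differs from the Beurling one.
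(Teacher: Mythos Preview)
Your argument is correct, but it is considerably more involved than the paper's, and your closing diagnosis is mistaken. The paper does \emph{exactly} what you say cannot be done: it runs a Baire argument on the Fr\'echet space $\mathcal{D}_K^{(\omega_1)}$ and then inserts $\rho=h_\varepsilon$, concluding via the $L^2$ negligibility criterion. The reason this succeeds is that the paper runs Baire using the \emph{inductive} description of Roumieu negligibility of scalars --- for each $\rho$ there exists an integer $l$ with $\langle h_\varepsilon,\rho\rangle=o\bigl(e^{-\frac{1}{l}\omega(1/\varepsilon)}\bigr)$ --- so the Baire sets are indexed by $(l,P,m)$ and the conclusion is a bound $|\langle h_\varepsilon,\rho\rangle|\le C\,e^{-\frac{1}{l}\omega(1/\varepsilon)}\,\|\rho\|_{K,p\omega_1}$ with a \emph{full-scale} decay $e^{-\frac{1}{l}\omega}$. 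Plugging $\rho=h_\varepsilon$ then gives $\langle h_\varepsilon,h_\varepsilon\rangle=o\bigl(e^{-\frac{1}{l}\omega+\omega_2}\bigr)$ where $\omega_2\ll\omega$ comes from moderateness of $\|h_\varepsilon\|_{K,p\omega_1}$ (note $p\omega_1\ll\omega$); since $\omega_2=o(\omega)$ this is $o(e^{-a\omega})$ for any $a<\frac{1}{l}$, i.e.\ Roumieu-negligible, and the $L^2$ criterion finishes.

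Your route differs in that you run Baire with the \emph{projective} rate $e^{-\omega_2}$, which is genuinely too weak to survive the insertion $\rho=h_\varepsilon$; you then compensate with the pointwise Fourier bound, the frequency cutoff $R(\varepsilon)$, and the carefully coordinated chain $\omega_a,\omega_1\ll\omega_c$, $\omega_d+\omega_b\ll\omega_2\ll\omega$. This works --- the key observation that $j\omega_1(R)=o(\omega_2)$ regardless of $j$, because $\omega_1(R)=o(\omega_d+\omega_b)=o(\omega_2)$ and $j$ is just a constant, is correct and breaks the apparent circularity --- but it is machinery you do not need. What your approach buys is a proof living entirely inside the projective picture; what the paper's buys is a three-line endgame. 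The moral is that in the Roumieu case the inductive formulation of scalar negligibility is the natural input for Baire, precisely because it hands you a rate on the $\omega$ scale rather than on a sub-$\omega$ scale.
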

 
 \begin{proof}
 We can make, without loss of generality, the same assumptions for the supports as in the proof of the previous theorem. and consider only fuctions with support on some given ball $\,B\,$
 Our assumption implies that :
  \begin{equation*}
 \forall \rho
 \in \mathcal{D}_B^{(omega_1)}\,,\exists l>0 \,\mbox{s.t.}
 \,<h_\varepsilon,\rho>\,=\,
  o(e^{-l\omega(\frac{1}{\varepsilon})})
  \end{equation*}
  
  Consider now the following closed symetric subsets of 
  $\,\mathcal{D}_B^{(\omega_1)}\,$
  indexed by integers $l,P,m$
  $$ \,H_(l,P,m)\,=\, \{\,\rho\,\in\,\mathcal{D}_B^{(\omega_1)}\,
  \mbox{s.t.}\,\forall\, \varepsilon\,\leq \,
  \frac{1,}{\varepsilon}\,,  
  |<h_\varepsilon,\rho>|\,\leq\,
   Pe^{-\frac{1}{l}\omega(\frac{1}{\varepsilon})}$$

 the countable union of those closed symetric sets 
  is the complete metrisable space 
  $\,\mathcal{D}_B^{(\omega_1)}\,,$  thus there exists one of them $\, H_{P_1,l_1,m_1}\,$    with non void interior .As our sets are symetric it contains a neighbourhood
  $\,V\,$ of some $\,\rho_1\,$ and its symetric thus by addition we see that $\, H_{2P_1,l_1,m_1}\,$  contains a neighbourhood of zero.  By the usual steps now we conclude that there exists $p, l , C$ such that 
  for all $\,\rho\,\in\, D_K^{(\,\omega_1)}\,$, and 
  $\varepsilon\,\leq\,\frac{1}{m_1}\,$
  we have:
  $$\, |<h_\varepsilon,\rho>|\,\leq\,
 Ce^{-\frac{1}{l}\omega(\frac{1}{\epsilon})}||\rho||_{K,p\omega_1}\,$$
   Applying this for every $\,\varepsilon\,\leq\,\frac{1}{l}  $ 
   to
    $\,g_\varepsilon\,$ we prove as previously , taking into consideration the fact that $\,(h_\varepsilon) \, $ is a moderate net that there exists $\,\omega_2\,<<\omega\,$ such that
    $$|<,(h_\varepsilon,,(h_\varepsilon>|\,=\,
    o(e^{-\frac{1}{l}\omega(\frac{1}{\varepsilon})+
    \omega_2(\frac{1}{\varepsilon})})$$
    but as $$\omega_2 \,<<\,\omega $$
    this implies that if $\,0\,<\,a\,< \,\frac{1}{l}$ then 
    $$|<h_\varepsilon, h_\varepsilon>|\,=\,o(e^{-a\omega(\frac{1}{\varepsilon}})$$ which implies in that it is a negligible net in Roumieu senss and thus $\,[h_\varepsilon] \,=\,0\,.$

 \end{proof}

  However, as in usual Colombeau algebras \cite{P.S.V.} or in the ultradifferentiable constructions  defined by  sequences $(M_p)$ \cite{P.R.S.Z}), for the cases of regular elements we have the following propositions:
  In Beurling case:
  \begin{theorem}

  If $\,[(g_\varepsilon)]=g\,$ belongs to 
  $\,\mathcal{G}^{(\omega),\infty}(\Omega)\,,$ 
  and 
  $$\, \forall \, \rho \in \,
   \mathcal{D}^{(\omega)}(\Omega) \, ,
   (<g_\varepsilon,\rho>) \,\in \,
   \mathcal{N}^{(\omega)}(\Omega)\,$$
    then $\,g\,=\,0 .$
    \end{theorem}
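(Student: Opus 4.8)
The plan is to reproduce the Baire-plus-self-pairing mechanism already used for the preceding Beurling equality theorem, but to let the regularity of $g$ do the work that was previously done by restricting the test functions to a single Banach space $\mathcal{D}^{(\omega),k}$. As always the statement is local, so by a partition of unity I may assume that all the $g_\varepsilon$ have support in the interior of one fixed compact set $K$; in particular each $g_\varepsilon$ lies in $\mathcal{D}^{(\omega)}_{K}$, and by the $L_2$ negligibility criterion (the characterisation of negligible moderate nets with compact support through the negligibility of their $L_2$ norms) it suffices to prove that $||g_\varepsilon||_2$ is a negligible net of Beurling generalized constants.

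First I would extract a uniform estimate by a Baire argument in the complete metrisable space $\mathcal{D}^{(\omega)}_{K}$. Fix a real $a>0$. By hypothesis $<g_\varepsilon,\rho>$ is $o(e^{-a\omega(\frac{1}{\varepsilon})})$ for every $\rho$, so $\mathcal{D}^{(\omega)}_{K}$ is the countable union of the closed symmetric sets
$$F^a_{P,m}=\{\rho\in\mathcal{D}^{(\omega)}_{K}\ :\ \forall\,\varepsilon\leq\tfrac{1}{m},\ |<g_\varepsilon,\rho>|\leq P\,e^{-a\omega(\frac{1}{\varepsilon})}\},\qquad (P,m)\in\NN\times\NN.$$
By Baire one of them has non-void interior, and the usual symmetrisation then yields a seminorm index $l_0=l_0(a)$, a threshold $m_0=m_0(a)$ and a constant $C=C(a)$ with
$$\forall\,\rho\in\mathcal{D}^{(\omega)}_{K},\ \forall\,\varepsilon\leq\tfrac{1}{m_0},\qquad |<g_\varepsilon,\rho>|\leq C\,||\rho||_{K,l_0}^{\omega}\,e^{-a\omega(\frac{1}{\varepsilon})}.$$

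The decisive step is the self-pairing $\rho=g_\varepsilon$, legitimate since each $g_\varepsilon\in\mathcal{D}^{(\omega)}_{K}$, which gives $||g_\varepsilon||_2^2=<g_\varepsilon,g_\varepsilon>\leq C\,||g_\varepsilon||_{K,l_0}^{\omega}\,e^{-a\omega(\frac{1}{\varepsilon})}$ for $\varepsilon\leq\frac{1}{m_0}$. Here regularity enters: because $g\in\mathcal{G}^{(\omega),\infty}(\Omega)$ there is a \emph{single} $k>0$, independent of $l_0$, with $||g_\varepsilon||_{K,l_0}^{\omega}=o(e^{k\omega(\frac{1}{\varepsilon})})$. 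Hence $||g_\varepsilon||_2^2=o(e^{(k-a)\omega(\frac{1}{\varepsilon})})$, so $||g_\varepsilon||_2=o(e^{\frac{1}{2}(k-a)\omega(\frac{1}{\varepsilon})})$. Since $a$ is arbitrary and $k$ fixed, for any prescribed $k'>0$ I choose $a>k+2k'$ and obtain $||g_\varepsilon||_2=o(e^{-k'\omega(\frac{1}{\varepsilon})})$ (the exponent tends to $-\infty$ because $\omega(\frac{1}{\varepsilon})\to\infty$). As this holds for every $k'$, the net $||g_\varepsilon||_2$ is negligible and the $L_2$ criterion gives $g=0$.

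The main obstacle is exactly the interplay between the index $l_0(a)$ that Baire produces and the exponent $k$: without regularity one would only bound $||g_\varepsilon||_{K,l_0(a)}^{\omega}$ by $e^{k(l_0(a))\omega(\frac{1}{\varepsilon})}$ with an exponent growing with $l_0(a)$, hence with $a$, so the gain $e^{-a\omega(\frac{1}{\varepsilon})}$ could no longer be guaranteed to dominate. Regularity supplies one exponent $k$ valid for all seminorm indices, which is precisely what allows $a$ to be pushed past $k$; this is also why, in contrast with the preceding theorem where the pairing was tested against the fixed Banach space $\mathcal{D}^{(\omega),k}$, here testing against the whole space $\mathcal{D}^{(\omega)}(\Omega)$ together with the regularity of $g$ is the correct hypothesis.
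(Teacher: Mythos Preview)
Your proposal is correct and follows essentially the same route as the paper's own proof: localise to compact supports, run a Baire argument on the Fr\'echet space $\mathcal{D}^{(\omega)}_K$ for each fixed $a>0$ to obtain a uniform estimate $|<g_\varepsilon,\rho>|\leq C\,||\rho||_{K,l_0}^{\omega}\,e^{-a\omega(1/\varepsilon)}$, then self-pair with $\rho=g_\varepsilon$ and use regularity to bound $||g_\varepsilon||_{K,l_0}^{\omega}$ by a fixed exponent independent of $l_0(a)$. Your closing paragraph isolating exactly why regularity is indispensable (a single $k$ valid for all seminorm indices, so that $a$ can be pushed past it) makes the mechanism more explicit than the paper does.
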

  \begin{proof}

  As previously we can by partition of unity suppose without loss of generality that there exists a compact subset $\,K \subset\subset \Omega$ whose interior contains all the supports of $g_\varepsilon \,.$
  We have to prove that under those conditions that:
  \begin{equation*}
  \forall\, a>0\, |<g_\varepsilon, g_\varepsilon>|\,=\,
  |<\hat{g}_\varepsilon,\hat{g}_\varepsilon>|\,=
  \,o(e^{-a\omega(\frac{1}{\varepsilon})}.)
  \end{equation*}
  By the regularity hypothesis we know that there exists 
  $h>0$ such that for any $k>0$
  $$||g_\varepsilon||_{K,k} =
   o(e^{h\omega(\frac{1}{\varepsilon})})$$
   (Here we use the family of the $L^2$ norms for the topology of our ultradifferential functions).
 For any chosen $a$ the family of linear continuous forms, on the complete metrisable space topologized by a countable family of seminorms 
  $\mathcal{D}_K^{(\omega)} $ defined by 
 $ \,\rho \rightarrow
  <e^{a\omega(\frac{1}{\varepsilon})}  g_\varepsilon, \rho> $ converges to zero.  Thus by the same techniques as before  
  we can prove that there exist $l>0, C>0,m \in \NN,m>0$
  such that for $\,\varepsilon\,\leq\,\frac{1}{m}$, we have:
  
 $$ e^{a\omega(\frac{1}{\varepsilon})}
  |< g_\varepsilon, \rho>| \,\leq\,
 C(\int e^{2l\omega(\xi)} |\hat{\rho}(\xi)|^2 d\xi)^{\frac{1}{2}} .$$
  By taking for any $\varepsilon\leq\frac{1}{m} $ for $\rho$,

 $ \rho = g_\varepsilon\, $ we obtain (using also the regularity of $g$\,)that there exists $\,h\,>\,0$ such that:
 \begin{equation*}
 <g_\varepsilon, g_\varepsilon > \,=\,
 < \hat{g_\varepsilon},\hat{g_\varepsilon}>\,\leq\,
 Ce^{-a\omega(\frac{1}{\varepsilon})}
(\int e^{2l\omega(\xi)}|\hat{g_\varepsilon}^2 d\xi )^
{\frac{1}{2}} \,=\,
O(e^{(-a+h)\omega(\frac{1}{\varepsilon })})
 \end{equation*}
  As we can choose $a$ as large as we want, we obtain :
  $$\forall b>0\,,|< g_\varepsilon,g_\varepsilon|> \,=\,
  o(e^{-b\omega(\frac{1}{\varepsilon })})$$
  and this has been proved to be a criterion for 
  $$,\ g\,=\,0$$
  \end{proof}

  In Roumieu case we have a similar result but with a slightly different proof:
   \begin{theorem}
   If $$g = [(g_\varepsilon)] \,\in\,
   \mathcal{G}^{\{\omega\},\infty}$$
    is such that 
 $$ \forall \, \rho \in \,
   \mathcal{D}^{\{\omega)\}}(\Omega) \, 
   (<g_\varepsilon,\rho>) \,\in \,
    \mathcal{N}^{\{\omega\}}(\Omega)\,$$  
    then $$g\,=\,0$$
    \end {theorem}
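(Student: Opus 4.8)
The plan is to run exactly the scheme of the preceding Beurling theorem, reducing the whole statement, via the negligibility criterion established above (negligibility of the net of local $L^2$ norms, in its Roumieu form), to producing a \emph{single} $b>0$ for which
\[ \langle g_\varepsilon,g_\varepsilon\rangle=\int|\hat g_\varepsilon(\xi)|^2\,d\xi=o(e^{-b\omega(\frac{1}{\varepsilon})}). \]
As usual I would first use a partition of unity (available since we are non-quasianalytic) to assume that every $g_\varepsilon$ has compact support in the interior of a fixed $K\subset\subset\Omega$, and I would work throughout with the $L^2$-family of seminorms $\|\cdot\|_{K,h}$, so that the Plancherel identity turns $\langle g_\varepsilon,g_\varepsilon\rangle$ into the squared $L^2$-norm of $\hat g_\varepsilon$.

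The genuinely new difficulty, absent in the Beurling case, is that the Roumieu test space $\mathcal{D}^{\{\omega\}}_K=\bigcup_{l>0}\mathcal{D}^{(\omega),l}_K$ is an inductive limit of Banach spaces and is \emph{not} a Baire space, so Baire's theorem cannot be applied to it directly. My way around this is to descend to a single level. Since $g\in\mathcal{G}^{\{\omega\},\infty}(\Omega)$ is Roumieu regular, there is a fixed index $h>0$ with $\|g_\varepsilon\|_{K,h}=o(e^{k\omega(\frac{1}{\varepsilon})})$ for every $k>0$; in particular each $g_\varepsilon$ itself lies in the Banach space $\mathcal{D}^{(\omega),h}_K$, and this single level is the only one against which I ever need to pair. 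Because $\mathcal{D}^{(\omega),h}_K\subset\mathcal{D}^{\{\omega\}}_K$, the hypothesis already applies to every $\rho$ in this level: for each such $\rho$ there is $k_\rho>0$ with $\langle g_\varepsilon,\rho\rangle=o(e^{-k_\rho\omega(\frac{1}{\varepsilon})})$.

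Next I would carry out the Baire/uniformity step inside the Banach space $\mathcal{D}^{(\omega),h}_K$, exactly as in the uniformity lemma used earlier in the Roumieu comparison of regularities. For integers $P,q,m$ I set
\[ F^{P,q,m}=\{\rho\in\mathcal{D}^{(\omega),h}_K:\ \forall\,\varepsilon\le\tfrac{1}{m},\ |\langle g_\varepsilon,\rho\rangle|\le P\,e^{-\frac{1}{q}\omega(\frac{1}{\varepsilon})}\}. \]
These sets are closed and symmetric, and their countable union exhausts $\mathcal{D}^{(\omega),h}_K$ (given $\rho$, pick an integer $q\ge 1/k_\rho$ and then suitable $m,P$). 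Baire gives one of them with nonempty interior, and the usual symmetrization $B(0,r)\subset F^{P_0,q_0,m_0}+F^{P_0,q_0,m_0}$ produces $b=1/q_0>0$, a constant $C>0$ and an index $m_0$ with
\[ |\langle g_\varepsilon,\rho\rangle|\le C\,e^{-b\omega(\frac{1}{\varepsilon})}\,\|\rho\|_{K,h}\qquad(\varepsilon\le\tfrac{1}{m_0},\ \rho\in\mathcal{D}^{(\omega),h}_K). \]
The whole point of this step is that Baire converts the $\rho$-dependent exponents $k_\rho$ into one positive $b$ valid simultaneously for all $\rho$.

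Finally I would substitute $\rho=g_\varepsilon$, which is legitimate since $g_\varepsilon\in\mathcal{D}^{(\omega),h}_K$, obtaining for small $\varepsilon$ the bound $\int|\hat g_\varepsilon|^2\,d\xi=\langle g_\varepsilon,g_\varepsilon\rangle\le C\,e^{-b\omega(\frac{1}{\varepsilon})}\|g_\varepsilon\|_{K,h}$; choosing in the regularity estimate any $k<b$ yields $\langle g_\varepsilon,g_\varepsilon\rangle=o(e^{-(b-k)\omega(\frac{1}{\varepsilon})})$ with $b-k>0$, which is precisely the Roumieu $L^2$-negligibility criterion and forces $g=0$. I expect the main obstacle to be this very first move rather than any computation: one must justify replacing the quantifier ``$\forall\rho\in\mathcal{D}^{\{\omega\}}$'' by the single Banach level $\mathcal{D}^{(\omega),h}_K$ selected by the regularity index \emph{before} any category argument can be launched. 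Everything afterwards is the Beurling argument, with only the Roumieu exponent bookkeeping changed.
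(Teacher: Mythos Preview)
Your proposal is correct and follows essentially the same route as the paper's proof: reduce to compact supports, use the Roumieu regularity to single out a fixed level $h>0$ and work in the Banach space $\mathcal{D}^{(\omega),h}_K$, run the Baire argument there to obtain a uniform estimate $|\langle g_\varepsilon,\rho\rangle|\le C e^{-b\omega(1/\varepsilon)}\|\rho\|_{K,h}$, then substitute $\rho=g_\varepsilon$ and combine with regularity (choosing $k<b$) to get Roumieu $L^2$-negligibility. Your explicit remark that the inductive limit $\mathcal{D}^{\{\omega\}}_K$ is not Baire and that this forces the descent to the single level determined by the regularity index is exactly the key observation, and the paper makes the same move.
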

    \begin{proof}

    As before we can without loss of generality suppose that there exists a compact subset $K$ of $\Omega$ 
  whose interior contains all the supports of all $g_\varepsilon$ .Here again we use the families of $L^2$ seminorms.
   The regularity hypothesis amounts to 
  $$\exists h>0\,\mbox{s.t.}\,\forall k>0,\exists m \in\NN \exists C_k>0
  \mbox{s.t.}\,\forall \,\varepsilon \leq \frac{1}{m}
  (\int e^{2h\omega(\xi)}|\hat{g_\varepsilon}(\xi)|^2 d\xi)^\frac{1}{2} \,\leq\,
  C_ke^{k\omega(\frac{1}{\varepsilon})}$$
  As $\,\mathcal{D}^{\{\omega)\}}(\Omega)\,$ contains the subspace $\mathcal{D}_{K,h}^{(\omega)}\,$ which is a Banach space.  the The hypothesis of action on test fuctions implies that 
  
  \begin{equation*}
 \,\forall \rho \in \mathcal{D}_{K,h}^{(\omega)}\,\exists m\in\NN ,\exists a>0 \,,\mbox{s.t.}\, \forall 
 \,\varepsilon \,\leq \frac{1}{m}
  <g_\varepsilon,\rho> \,=\,
  o(e^{-a\omega(\frac{1}{\varepsilon})})\,.
  \end{equation*}
  
   Define now the countable family of closed symetric subsets 
   of this Banach space defined by thre integers $P,l, m$ 
   
   $$  F_{P,l,m}\,=\{,\rho\,\in \,\mathcal{D}_{K,h}^{(\omega)}\,
   \mbox{s.t.}\,\forall\,\varepsilon\,\leq \,\frac{1}{m}   
  |<g_\varepsilon, \rho >|\,\leq\,
   P e^{-\frac{1}{l}\omega(\frac{1}{\varepsilon})}||\,\}
 $$   
  As the Union of all those closed subsets is the Banach space  $ \, \mathcal{D}_{K,h}^{(\omega)}\,$ we can conclude as previously that there is a ball $B(o,r) $ in one of them and thus conclude that there exist $\,C>0,l>0, m>O\,$ such thatfor any  
  $\,\varepsilon \,\leq\,\frac{1}{m}\,$ the following holds
  
  $$ |<g_\varepsilon,\rho>|\,\leq\,
   C(e^{-\frac{1}{l}\omega(\frac{1}{\varepsilon})})||\rho||_{K,h}\,.
   $$
    Applying this for any $\,\varepsilon \leq\frac{1}{m}\, $  on
     $\,\rho\,=\,g_\varepsilon \,$ we obtain , taking into consideration the Roumieu definition of regularity of the net 
     $\,(g_\varepsilon)\,$ in the inductive presentation ,that for any $\,\sigma\,>\,0\,$  we have :
     $$ \,|<g_\varepsilon,g_\varepsilon>|\, =\,
  o( e^{-\frac{1}{l}   +\sigma)\omega(\frac{1}{\varepsilon})}).$$
  By choosing 
  $\,\sigma\,$ strictly smaller than $\frac{1}{l}$ we obtain the Roumieu condition of the negligibility of the net
  $ (||(g_\varepsilon||_{L_2})\,$ , and this implies the (Roumieu) negligibility of the net $\,(g_\varepsilon)\,$

 \end{proof}

REFERENCES 



\end{document}